\newtheorem{theorem}{Theorem}[chapter]
\newtheorem{proposition}[theorem]{Proposition}
\newtheorem{lemma}[theorem]{Lemma}
\newtheorem{corollary}[theorem]{Corollary}
\theoremstyle{definition}
\newtheorem{definition}[theorem]{Definition}
\newtheorem{example}[theorem]{Example}
\newtheorem{remark}[theorem]{Remark}
\newcommand{\eq}[1]{\mbox{\rm(\ref{#1})}}
\newcommand{\Rb}{\mathbb{R}}  \newcommand{\Nb}{\mathbb{N}}
\newcommand{\Qb}{\mathbb{Q}} \newcommand{\Dc}{\mathcal{D}}
\newcommand{\Cb}{\mathbb{C}}  \newcommand{\Kb}{\mathbb{K}}
\newcommand{\es}{\varnothing}   \newcommand{\vep}{\varepsilon}
\newcommand{\al}{\alpha}           \newcommand{\vfi}{\varphi}
\newcommand{\BV}{\mbox{\rm BV}}  \newcommand{\Reg}{\mbox{\rm Reg}}
\newcommand{\Bd}{\mbox{\rm B}}    \newcommand{\Mon}{\mbox{\rm Mon}}
\newcommand{\lan}{\langle}         \newcommand{\ran}{\rangle}
\newcommand{\rra}{\rightrightarrows}
\newcommand{\pw}{pointwise}      \newcommand{\rc}{relatively compact}
\newcommand{\D}{\displaystyle}
\newcommand{\wto}{\stackrel{\mbox{\scriptsize\rm \!w\,}}{\to}} 
\newcommand{\ov}[1]{\overline{#1}}   \newcommand{\wt}[1]{\widetilde{#1}}
\begin{document}

\keywords{selection principle, Helly's theorem, metric space, regulated function,
pointwise convergence, weak convergence, approximate variation.}

\mathclass{Primary 26A45, 40A30;  Secondary 54E35, 26A48.}

\thanks{The article was prepared within the framework of the Academic Fund Program
at the National Research University Higher School of Economics (HSE) in 2017--2018
(grant~no.\,17-01-0050) and by the Russian Academic Excellence Project ``5--100''.}

\abbrevauthors{V.\,V.~Chistyakov}
\abbrevtitle{Approximate variation}

\title{The approximate variation to pointwise selection principles}

\author{Vyacheslav V.~Chistyakov}
\address{Department of Informatics, Mathematics and Computer Science\\
National Research University Higher School of Economics\\
Bol'shaya Pech{\"e}rskaya Street 25/12\\
Nizhny Novgorod 603155, Russian Federation\\
E-mail: vchistyakov@hse.ru, czeslaw@mail.ru}

\maketitledis

\tableofcontents

\begin{abstract}
Let $T\subset\Rb$, $M$ be a metric space with metric $d$, and $M^T$ be the set of all
functions mapping $T$ into $M$. Given $f\in M^T$, we study the properties of the
\emph{approximate variation\/} $\{V_\vep(f)\}_{\vep>0}$, where $V_\vep(f)$ is the
greatest lower bound of Jordan variations $V(g)$ of functions $g\in M^T$ such that
$d(f(t),g(t))\le\vep$ for all $t\in T$. The notion of $\vep$-variation $V_\vep(f)$ was
introduced by Fra{\v n}kov{\'a} [Math.\ Bohem.\ 116 (1991), 20--59] for intervals
$T=[a,b]$ in $\Rb$ and $M=\Rb^N$ and extended to the general case by Chistyakov
and Chistyakova [Studia Math.\ 238 (2017), 37--57]. We prove directly the following basic
pointwise selection principle: {\it If a sequence of functions $\{f_j\}_{j=1}^\infty$ from
$M^T$ is such that the closure in $M$ of the set $\{f_j(t):j\in\Nb\}$ is compact for all
$t\in T$ and\/ $\limsup_{j\to\infty}V_\vep(f_j)$ is finite for all\/ $\vep>0$, then it contains
a subsequence, which converges pointwise on $T$ to a bounded regulated function
$f\in M^T$.} We establish several variants of this result for sequences of regulated and
nonregulated functions, for functions with values in reflexive separable Banach spaces,
for the almost everywhere convergence and weak pointwise convergence of extracted
subsequences, and comment on the necessity of assumptions in the selection principles.
The sharpness of all assertions is illustrated by examples.
\end{abstract}
\makeabstract

\chapter{Introduction} \label{s:intro}

A pointwise selection principle is a statement which asserts that under certain specified
assumptions on a given sequence of functions $f_j:T\to M$ ($j\in\Nb$), their domain $T$
and range $M$, the sequence admits a subsequence converging in (the topology of)
$M$ pointwise (=everywhere) on the set $T$; in other words, this is a compactness
theorem in the topology of pointwise convergence. Our intention here is twofold: first,
to draw attention to a conjunction of pointwise selection principles and characterizations of
regulated functions (cf.\ also \cite{waterman80}) and, second, to exhibit the main goal
of this paper.

To be specific, we let $T=I=[a,b]$ be a closed interval in $\Rb$ and $M=\Rb$ and
denote by: $\Rb^I$ the set of \emph{all\/} functions mapping $I$ into $\Rb$, $\Mon(I)$
the set of \emph{monotone\/} functions, $\BV(I)$ the set of functions of
\emph{bounded\/} (Jordan) \emph{variation}, and $\Reg(I)$ the set of
\emph{regulated\/} functions from $\Rb^I$. Recall that $f\in\Rb^I$ is regulated
provided the left limit $f(t-0)\in\Rb$ exists at each point $a<t\le b$ and the right limit
$f(t+0)\in\Rb$ exists at each point $a\le t<b$. Clearly,
$\Mon(I)\subset\BV(I)\subset\Reg(I)$, and it is well known that each function from
$\Reg(I)$ is bounded, has a finite or countable set of discontinuity points, and is the
uniform limit of a sequence of step functions on~$I$. Scalar- (and vector-) valued
regulated functions are of importance in various branches of analysis, e.g., the theory
of convergence of Fourier series, stochastic processes, Riemann- and Lebesgue-Stieltjes
integrals, generalized ordinary differential equations, impulse controls, modular analysis
(\cite{Aumann}, \cite{MMS}, \cite{Dieu}, \cite{GNW}, \cite{Hild}, \cite{Loja}, \cite{Nat},
\cite{Rao}, \cite{Rudin}, \cite{Saks}, \cite{Schwabik}, \cite{Schwartz}).

In order for a sequence of functions $\{f_j\}\subset\Rb^I$ to have a pointwise convergent
subsequence, it is quite natural, by virtue of Bolzano-Weierstrass' theorem (viz., a bounded
sequence in $\Rb$ admits a convergent subsequence), that $\{f_j\}$ should be
\emph{pointwise bounded\/} (i.e., $\sup_{j\in\Nb}|f_j(t)|<\infty$ for all $t\in I$).
However, a pointwise (or even uniformly) bounded sequence $\{f_j\}\subset\Rb^I$
need not have a pointwise convergent subsequence: a traditional example is the sequence
$f_j(t)=\sin(jt)$ for $j\in\Nb$ and $t\in I=[0,2\pi]$ (see Remark~\ref{r:sinjt} below).
So, additional assumptions on $\{f_j\}$ are to be imposed.

The historically first pointwise selection principles are due to Helly \cite{Helly}:
\emph{a uniformly bounded sequence $\{f_j\}\subset\Mon(I)$ contains a pointwise
convergent subsequence\/} (whose pointwise limit belongs to $\Mon(I)$). This theorem,
a selection principle for monotone functions, is based on and extends Bolzano-%
Weierstrass' theorem and implies one more Helly's selection principle for functions of
bounded variation (in \eq{e:bVfj} below, $V(f)$ denotes the Jordan variation of
$f\in\Rb^I$): \emph{a pointwise bounded sequence $\{f_j\}\subset\Rb^I$ satisfying
  \begin{equation} \label{e:bVfj}
\sup_{j\in\Nb}V(f_j)<\infty
  \end{equation}
contains a pointwise convergent subsequence\/} (with the pointwise limit from
$\BV(I)$). Note that condition \eq{e:bVfj} of uniform boundedness of variations may be
replaced by a (seemingly) more general condition $\limsup_{j\to\infty}V(f_j)<\infty$.

It is well known that Helly's selection principles play a significant role in analysis (e.g., 
\cite{Hild}, \cite{Nat}, \cite{Saks}). 
A vast literature already exists concerning generalizations of
Helly's principles for various classes of functions (\cite{Barbu}--\cite{JMS},
\cite{JDCS}--\cite{MatTr}, \cite{MMS}, \cite{Studia17}--\cite{waterman80},
\cite{Dudley}--\cite{Gnilka}, \cite{Megre}, \cite{MuOr}, \cite{Schramm}, 
\cite{IzVUZ}--\cite{Wat76}, and references therein) and their applications
(\cite{Barbu}, \cite{Sovae}, \cite{MMS}, \cite{JMAA2019}, \cite{JFA05}--\cite{Studia02},
\cite{Dudley}, \cite{GNW}, \cite{Hermes}, \cite{Schwabik}).
We recall some of these generalizations, which are relevant for our purposes.

Let $\varphi:[0,\infty)\to[0,\infty)$ be a nondecreasing continuous function such that
$\varphi(0)=0$, $\varphi(u)>0$ for $u>0$, and $\varphi(u)\to\infty$ as $u\to\infty$.
We say that $f\in\Rb^I$ is \emph{of bounded $\varphi$-variation\/} on~$I$ (in the sense
of Wiener and Young) and write $f\in\BV_{\!\varphi}(I)$ if the following quantity, called the
\emph{$\varphi$-variation\/} of $f$, is finite:
  \begin{equation*}
V_\varphi(f)=\sup\,\biggl\{\sum_{i=1}^n\varphi\bigl(|f(I_i)|\bigr):
\mbox{$n\in\Nb$ and $\{I_i\}_1^n\prec I$}\biggr\},
  \end{equation*}
where the notation $\{I_i\}_1^n\prec I$ stands for a non-ordered collection of $n$
non-overlapping intervals $I_i=[a_i,b_i]\subset I$ and $|f(I_i)|=|f(b_i)-f(a_i)|$,
$i=1,\dots,n$. \label{p:nco}
(In particular, if $\varphi(u)\!=\!u$, we have $V_\varphi(f)=V(f)$.)
It was shown by Musielak and Orlicz \cite{MuOr} that $\BV_{\!\varphi}(I)\subset\Reg(I)$,
and if $\varphi$ is additionally convex and $\varphi'(0)\equiv\lim_{u\to+0}\varphi(u)/u=0$,
then $\BV(I)$ is a proper subset of $\BV_{\!\varphi}(I)$. Goff\-man, Moran and
Waterman \cite{GMW} characterized the set $\Reg(I)$ as follows: if $f\!\in\!\Reg(I)$ and
\mbox{$\min\{f(t-0),f(t+0)\}\!\le\! f(t)\!\le\!\max\{f(t-0),f(t+0)\}$} at each point
$t\in I$ of discontinuity of $f$, then there is a convex function $\varphi$ (as above) with
$\varphi'(0)=0$ such that $f\in\BV_{\!\varphi}(I)$. A generalization of Helly's theorem
for BV functions, the so called \emph{Helly-type selection principle}, was established in
\cite{MuOr}, where condition \eq{e:bVfj} was replaced by
$\sup_{j\in\Nb}V_\varphi(f_j)<\infty$.

One more Helly-type selection principle is due to Waterman \cite{Wat76}, who replaced
condition \eq{e:bVfj} by $\sup_{j\in\Nb}V_\Lambda(f_j)<\infty$, where
$V_\Lambda(f)$ is the Waterman \emph{$\Lambda$-variation\/} of $f\in\Rb^I$
defined by (\cite{Wat72})
  \begin{equation*}
V_\Lambda(f)=\sup\,\biggl\{\sum_{i=1}^n\frac{|f(I_i)|}{\lambda_i}:
\mbox{$n\in\Nb$ and $\{I_i\}_1^n\prec I$}\biggr\};
  \end{equation*}
here $\Lambda=\{\lambda_i\}_{i=1}^\infty$ is a \emph{Waterman sequence}, i.e.,
$\Lambda\subset(0,\infty)$ is nondecreasing, unbounded and
$\sum_{i=1}^\infty1/\lambda_i=\infty$. (Formally, $V_\Lambda(f)=V(f)$ for
$\lambda_i=1$,~$i\in\Nb$.) For the set $\Lambda\BV(I)=\{f\in\Rb^I:
V_\Lambda(f)<\infty\}$ of functions \emph{of $\Lambda$-bounded variation},
Waterman \cite{Wat72} showed that $\Lambda\BV(I)\subset\Reg(I)$ and $\BV(I)$ is a
proper subset of $\Lambda\BV(I)$. Perlman \cite{Perl} proved that
$\BV(I)=\bigcap_\Lambda\Lambda\BV(I)$ and obtained the following characterization of
regulated functions: $\Reg(I)=\bigcup_\Lambda\Lambda\BV(I)$, where the intersection
above and the union are taken over all Waterman sequences~$\Lambda$
(but not over any countable collection). 

Taking into account that the sets $\Mon(I)$, $\BV(I)$, $\BV_{\!\varphi}(I)$, and
$\Lambda\BV(I)$ are contained in $\Reg(I)$, Helly's selection principles and
their generalizations alluded to above are compactness theorems in the class of
regulated functions.

In the literature, there are characterizations of the set $\Reg(I)$, which do not rely on
notions of bounded (or generalized bounded) variations of any kind. One of them was
given by Chanturiya (\cite{Chan74}, \cite{Chan75}) in the form
$\Reg(I)=\{f\in\Rb^I:\nu_n(f)=o(n)\}$, where E.~Landau's small `$o$' means, as usual,
that $o(n)/n\to0$ as $n\to\infty$, and the sequence $\{\nu_n(f)\}_{n=1}^\infty%
\subset[0,\infty]$, called the \emph{modulus of variation\/} of $f$, is defined by
(\cite{Chan74}, cf.\ also \cite[Section 11.3.7]{GNW})
  \begin{equation*}
\nu_n(f)=\sup\,\biggl\{\sum_{i=1}^n|f(I_i)|:\{I_i\}_1^n\prec I\biggr\},\quad n\in\Nb.
  \end{equation*}
Note that $\nu_n(f)\le V(f)$ for all $n\in\Nb$ and $\nu_n(f)\to V(f)$ as $n\to\infty$.
The author (\cite{JMAA05}, \cite{Ischia}) replaced condition \eq{e:bVfj} by
(a very weak one)
  \begin{equation} \label{e:nunfj}
\limsup_{j\to\infty}\nu_n(f_j)=o(n)
  \end{equation}
and obtained a Helly-type pointwise selection principle (in which the pointwise limit of the
extracted subsequence of $\{f_j\}$ belongs to $\Reg(I)$ and) which contains, as
particular cases, all the above Helly-type selection principles and many others
(\cite{DAN06}, \cite{MatTr}, \cite{Studia17}). Assumption \eq{e:nunfj} is applicable
to sequences of nonregulated functions, so the corresponding Helly-type pointwise
selection principle under \eq{e:nunfj} is already outside the scope of regulated functions.
To see this, let $\Dc\in\Rb^I$ be the Dirichlet function on $I=[0,1]$ (i.e., $\Dc(t)=1$ if
$t\in I$ is rational, and $\Dc(t)=0$ otherwise) and $f_j(t)=\Dc(t)/j$ for $j\in\Nb$ and
$t\in I$. We have $f_j\notin\Reg(I)$ and $\nu_n(f_j)=n/j$ for all $j,n\in\Nb$, and so,
\eq{e:nunfj} is satisfied while \eq{e:bVfj} is not (for any kinds of generalized variations
including $V_\vfi$ and $V_\Lambda$).
A special feature of condition \eq{e:nunfj} is that, for $f\in\Reg(I)$, it is
\emph{necessary\/} for the uniform convergence of $\{f_j\}$ to $f$, and
`almost necessary' for the pointwise convergence of $\{f_j\}$ to~$f$---note that this is
not at all the case for (uniform) conditions of the form~\eq{e:bVfj}.

Dudley and Norvai{\v s}a \cite[Part~III, Section~2]{Dudley} presented the following
characterization of regulated functions:
\mbox{$\Reg(I)=\{f\in\Rb^I:N_\vep(f)\!<\!\infty\,\,\forall\,\vep\!>\!0\}$},
where the (untitled) quantity $N_\vep(f)\in\{0\}\cup\Nb\cup\{\infty\}$ for $f\in\Rb^I$
is given by
  \begin{equation*}
N_\vep(f)=\sup\,\Bigl\{n\in\Nb:\mbox{$\exists\,\{I_i\}_1^n\prec I$ such that
$\displaystyle\min_{1\le i\le n}|f(I_i)|>\vep$}\Bigr\},\quad\vep>0
  \end{equation*}
(with $\sup\varnothing=0$). They established a Helly-type pointwise selection principle
in the class $\Reg(I)$ by replacing \eq{e:bVfj} with $\sup_{j\in\Nb}N_\vep(f_j)<\infty$
for all \mbox{$\vep>0$}. In a series of papers by the author, Maniscalco and Tretyachenko
(\cite[Chap\-ter~5]{MMS}, \cite{waterman80}, \cite{IzVUZ}, \cite{MZ08}), it was shown
that we get a more powerful selection principle (outside the scope of regulated functions)
if \eq{e:bVfj} is replaced by
  \begin{equation} \label{e:Nefj}
\limsup_{j\to\infty}N_\vep(f_j)<\infty\quad\,\,\mbox{for \,all}\quad\,\,\vep>0.
  \end{equation}
If we let the sequence of nonregulated functions $f_j(t)=\Dc(t)/j$ be as above, we find
$N_\vep(f_j)=\infty$ if $j<1/\vep$ and $N_\vep(f_j)=0$ if $j\ge1/\vep$, and so, condition
\eq{e:Nefj} is satisfied. Moreover, \eq{e:Nefj} is \emph{necessary\/} for the uniform
convergence and `almost necessary' for the pointwise convergence of $\{f_j\}$ to
$f\in\Reg(I)$. A comparison of different Helly-type pointwise selection principles is presented
in \cite{JMAA05}--\cite{MatTr}, \cite{MMS}, \cite{Studia17}, \cite{Manisc}.

Essential for the present paper, one more characterization of regulated functions is due to
Fra{\v n}kov{\'a} \cite{Fr}:
\mbox{$\Reg(I)=\{f\in\Rb^I:V_\vep(f)<\infty\,\,\forall\,\vep>0\}$}, where the
\emph{$\vep$-variation\/} $V_\vep(f)$ of $f\in\Rb^I$ is defined by
(\cite[Definition~3.2]{Fr})
  \begin{equation*}
V_\vep(f)=\inf\,\bigl\{V(g):\mbox{$g\in\BV(I)$ and $|f(t)-g(t)|\le\vep$
$\forall\,t\in I$}\bigl\},\quad\vep>0
  \end{equation*}
(with $\inf\varnothing=\infty$). She established a Helly-type selection principle in the class
$\Reg(I)$ under the assumption of uniform boundedness of $\vep$-variations
$\sup_{j\in\Nb}V_\vep(f_j)\!<\!\infty$ for all $\vep\!>\!0$
in place of \eq{e:bVfj}. However, following the `philosophy' of \eq{e:nunfj} and
\eq{e:Nefj}, a weaker condition, replacing \eq{e:bVfj}, is of the~form
  \begin{equation} \label{e:Vee}
\limsup_{j\to\infty}V_\vep(f_j)<\infty\quad\,\,\mbox{for \,all}\quad\,\,\vep>0.
  \end{equation}
Making use of \eq{e:Vee}, the author and Chistyakova \cite{Studia17} proved a
Helly-type pointwise selection principle outside the scope of regulated functions by
showing that \eq{e:Vee} implies \eq{e:nunfj}. If the sequence $f_j(t)=\Dc(t)/j$ is as
above, we get $V_\vep(f_j)=\infty$ if $j<1/(2\vep)$ and $V_\vep(f_j)=0$ if
$j\ge1/(2\vep)$, and so, \eq{e:Vee} is fulfilled while the uniform $\vep$-variations
are unbounded for $0<\vep<1/2$.

In this paper, we present a direct proof of a Helly-type pointwise selection principle
under \eq{e:Vee}, not relying on \eq{e:nunfj}, and show that condition \eq{e:Vee} is
necessary for the uniform convergence and `almost necessary' for the pointwise
convergence of $\{f_j\}$ to $f\in\Reg(I)$ (cf. Remark~\ref{r:neces} below).

All the above pointwise selection principles are based on the Helly selection theorem for
monotone functions. A different kind of a pointwise selection principle, basing on
Ramsey's theorem from formal logic \cite{Ramsey}, was given by Schrader~%
\cite{Schrader}. In order to recall it, we introduce a notation: given a sign-changing
function $f\in\Rb^I$, we denote by $\mathcal{P}(f)$ the set of all finite collections
of points $\{t_i\}_{i=1}^n\subset I$ with $n\in\Nb$ such that $t_1<t_2<\dots<t_n$
and either $(-1)^if(t_i)>0$ for all $i=1,\dots,n$, or $(-1)^if(t_i)<0$ for all $i=1,\dots,n$,
or $(-1)^if(t_i)=0$ for all $i=1,\dots,n$. The quantity
  \begin{equation*}
\mathcal{T}(f)=\sup\,\biggl\{\sum_{i=1}^n|f(t_i)|:\mbox{$n\in\Nb$ and
$\{t_i\}_{i=1}^n\in\mathcal{P}(f)$}\biggr\}
  \end{equation*}
is said to be Schrader's \emph{oscillation\/} of $f$ on $I$; if $f$ is nonnegative on $I$
or $f$ is nonpositive on $I$, we set $\mathcal{T}(f)=\sup_{t\in I}|f(t)|$. Schrader
proved that \emph{if $\{f_j\}\subset\Rb^I$ is such that
$\sup_{j,k\in\Nb}\mathcal{T}(f_j-f_k)<\infty$, then $\{f_j\}$ contains a subsequence,
which converges everywhere on~$I$.} This is an \emph{irregular\/} pointwise
selection principle in the sense that, although the sequence $\{f_j\}$ satisfying
Schrader's condition is pointwise bounded on $I$, we cannot infer any `regularity'
properties of the (pointwise) limit function (e.g., it may be applied to the sequence
$f_j(t)=(-1)^j\Dc(t)$ for $j\in\Nb$ and $t\in[0,1]$). Maniscalco \cite{Manisc} proved
that Schrader's assumption and condition \eq{e:nunfj} are independent (in the sense that
they produce different pointwise selection principles). Extensions of Schrader's result
are presented in \cite{JMAA08}, \cite{waterman80}, \cite{JMAA13}, \cite{Piazza}.

One of the goals of this paper is to obtain irregular pointwise selection principles in
terms of Fra{\v n}kov{\'a}'s $\vep$-variations $V_\vep(f)$ (Section~\ref{ss:irreg}).

This paper is a thorough self-contained study of the \emph{approximate variation},
i.e., the family $\{V_\vep(f)\}_{\vep>0}$ for functions $f:T\to M$ mapping a nonempty
subset $T$ of $\Rb$ into a metric space $(M,d)$. We develop a number of pointwise
(and almost everywhere) selection principles, including irregular ones, for sequences of
functions with values in metric spaces, normed spaces and reflexive separable Banach
spaces. All assertions and their sharpness are illustrated by concrete examples.
The plan of the exposition can be clearly seen from the Contents. Finally, it is to be noted
that, besides powerful selection principles, based on $\vep$-variations, the notion of
approximate variation gives a nice and highly nontrivial example of a \emph{metric
modular\/} in the sense of the author (\cite{DAN06-2}, \cite{NA05}, \cite{MMS}), or
a classical \emph{modular} in the sense of Musielak-Orlicz (\cite{Mus}, \cite{MuOr-2})
if $(M,\|\cdot\|)$ is a normed linear space. Results corresponding to the modular aspects
of the approximate variation will be published elsewhere.

\chapter{The approximate variation and its properties} \label{s:appv}

\section{Notation and terminology}

We begin by introducing notations and the terminology which will be used throughout
this paper.

Let $T$ be a nonempty set (in the sequel, $T\subset\Rb$),  $(M,d)$ be a metric space
with metric $d$, and $M^T$ be the set of all functions $f:T\to M$ mapping $T$
into~$M$. The set $M^T$ is equipped with the (extended-valued)
\emph{uniform metric}
  \begin{equation*}
d_{\infty,T}(f,g)=\sup_{t\in T}\,d(f(t),g(t)),\quad\,\,f,g\in M^T.
  \end{equation*}
The letter $c$ stands, as a rule, for a \emph{constant\/} function $c\in M^T$
(sometimes identified with $c\in M$).

The \emph{oscillation\/} of a function $f\in M^T$ on the set $T$ is the quantity%
\footnote{The notation for the oscillation $|f(T)|$ should not be confused with the
notation for the increment $|f(I_i)|=|f(b_i)-f(a_i)|$ from p.~\pageref{p:nco}, the latter
being used only in the Introduction.}
  \begin{equation*}
|f(T)|\equiv|f(T)|_d=\sup_{s,t\in T}d(f(s),f(t))\in[0,\infty],
  \end{equation*}
also known as the \emph{diameter of the image\/} $f(T)=\{f(t):t\in T\}\subset M$.
We denote by $\Bd(T;M)=\{f\in M^T:|f(T)|<\infty\}$ the set of all
\emph{bounded functions\/} from $T$ into~$M$.

Given $f,g\in M^T$ and $s,t\in T$, by the triangle inequality for $d$, we find
  \begin{equation} \label{e:1_1}
d_{\infty,T}(f,g)\le|f(T)|+d(f(t),g(t))+|g(T)|
  \end{equation}
and
  \begin{equation} \label{e:10}
d(f(s),f(t))\le d(g(s),g(t))+2d_{\infty,T}(f,g);
  \end{equation}
the definition of the oscillation and inequality \eq{e:10} imply
  \begin{equation} \label{e:1s2}
|f(T)|\le|g(T)|+2d_{\infty,T}(f,g).
  \end{equation}
Clearly (by \eq{e:1_1} and \eq{e:1s2}), $d_{\infty,T}(f,g)<\infty$ for all
$f,g\in\Bd(T;M)$ and, for any \emph{constant\/} function $c\in M^T$,
$\Bd(T;M)=\{f\in M^T:d_{\infty,T}(f,c)<\infty\}$.

For a sequence of functions $\{f_j\}\equiv\{f_j\}_{j=1}^\infty\subset M^T$ and
$f\in M^T$, we write:

(a) $f_j\to f$ on $T$ to denote the \emph{pointwise\/} (=\,\emph{everywhere\/})
\emph{convergence\/} of $\{f_j\}$ to $f$ (that is, $\D\lim_{j\to\infty}d(f_j(t),f(t))=0$
for all $t\in T$);

(b) $f_j\rra f$ on $T$ to denote the \emph{uniform convergence\/} of $\{f_j\}$ to $f$:
$\D\lim_{j\to\infty}d_{\infty,T}(f_j,f)=0$.
(Clearly, (b) implies (a), but not vice versa.)

Recall that a sequence of functions $\{f_j\}\subset M^T$ is said to be \emph{\pw\ \rc\/}
on $T$ provided the closure in $M$ of the set $\{f_j(t):j\in\Nb\}$
is compact for all $t\in T$.

From now on, we suppose that $T$ is a (nonempty) subset of the reals~$\Rb$.

The (Jordan) \emph{variation\/} of $f\in M^T$ is the quantity (e.g.,
\cite[Chapter~4, Section~9]{Schwartz})
  \begin{equation*}
V(f,T)=\sup_P\sum_{i=1}^md(f(t_i),f(t_{i-1}))\in[0,\infty],
  \end{equation*}
where the supremum is taken over all partitions $P$ of $T$, i.e., $m\in\Nb$ and
$P=\{t_i\}_{i=0}^m\subset T$ such that $t_{i-1}\le t_i$ for all $i=1,2,\dots,m$.
We denote by $\BV(T;M)=\{f\in M^T:V(f,T)<\infty\}$ the set of all
\emph{functions of bounded variation\/} from $T$ into~$M$.

The following four basic properties of the functional $V$ are well-known.
Given $f\in M^T$, we have:

\begin{itemize} \label{p:V}
\item[(V.1)] $|f(T)|\le V(f,T)$ (and so, $\BV(T;M)\subset\Bd(T;M)$);
\item[(V.2)] $V(f,T)=V(f,T\cap(-\infty,t])+V(f,T\cap[t,\infty))$ for all $t\in T$
  (\emph{additivity\/} of $V$ in the second variable, cf.~%
  \cite{Var}, \cite{JDCS}, \cite{Schwartz});
\item[(V.3)] if $\{f_j\}\subset M^T$ and $f_j\to f$ on $T$, then
  $V(f,T)\le\liminf_{j\to\infty}V(f_j,T)$ (sequential \emph{lower  semicontinuity\/} of
  $V$ in the first variable, cf.~\cite{JDCS}, \cite{MatSb});
\item[(V.4)] a \pw\ \rc\ sequence of functions $\{f_j\}\subset M^T$ satisfying condition
  $\sup_{j\in\Nb}V(f_j,T)<\infty$ contains a subsequence, which converges \pw\
  on $T$ to a function $f\in\BV(T;M)$ (Helly-type \emph{pointwise
  selection principle}, cf.~\cite{JMAA}, \cite{Sovae}).
\end{itemize}

In what follows, the letter $I$ denotes a closed interval $I=[a,b]$ with the endpoints
$a,b\in\Rb$, $a<b$.

Now, we recall the notion of a regulated function (introduced in \cite{Aumann} for real
valued functions). We say (\cite{JMAA05}) that a function $f\in M^I$ is \emph{regulated}
\label{p:reg} (or \emph{proper}, or \emph{simple\/}) and write $f\in\Reg(I;M)$ if it
satisfies the Cauchy condition at every point of $I=[a,b]$, i.e., $d(f(s),f(t))\to0$ as
$I\ni s,t\to\tau-0$ for each $a<\tau\le b$, and $d(f(s),f(t))\to0$ as
$I\ni s,t\to\tau'+0$ for each $a\le\tau'<b$. It is well-known (e.g.,
\cite{Var}, \cite{JMAA05},\cite{Schwartz}) that
  \begin{equation*}
\BV(I;M)\subset\Reg(I;M)\subset\Bd(I;M),
  \end{equation*}
the set $\Reg(I;M)$ of all regulated function is closed with respect to the uniform
convergence, and the pair $(\Reg(I;M),d_{\infty,I})$ is a complete metric space provided
$(M,d)$ is complete (see also \cite[Theorem~2]{Studia17} for some generalization).
Furthermore, if $(M,d)$ is complete, then, by Cauchy's criterion, we have:
$f\in\Reg(I;M)$ if and only if the left limit $f(\tau-0)\in M$ exists at each
point $a<\tau\le b$ (meaning that $d(f(t),f(\tau-0))\to0$ as $I\ni t\to\tau-0$),
and the right limit $f(\tau'+0)\in M$ exists at each point $a\le\tau'<b$
(i.e., $d(f(t),f(\tau'+0))\to0$ as $I\ni t\to\tau'+0$).

Regulated functions can be uniformly approximated by step functions (see \eq{e:stR}) as
follows. Recall that $f\in M^I$ is said to be a \emph{step function\/} (in symbols,
$f\in\mbox{\rm St}(I;M)$) provided, for some $m\in\Nb$, there exists a partition
$a=t_0<t_1<t_2<\dots<t_{m-1}<t_m=b$ of $I=[a,b]$ such that $f$ takes a
constant value on each (open) interval $(t_{i-1},t_i)$, $i=1,2,\dots,m$. Clearly,
  \begin{equation} \label{e:StBV}
\mbox{\rm St}(I;M)\subset\BV(I;M).
  \end{equation}
Furthermore (cf.\ \cite[(7.6.1)]{Dieu}), we have
  \begin{equation} \label{e:stR}
\Reg(I;M)=\{f\in M^I:\mbox{$\exists\,\{f_j\}\subset\mbox{\rm St}(I;M)$
such that $f_j\rra f$ on $I$}\}
  \end{equation}
(if, in addition, $f\in\BV(I;M)$, then $\{f_j\}\subset\mbox{\rm St}(I;M)$
can be chosen such that $f_j\rra f$ on $I$ and $V(f_j,I)\le V(f,I)$ for all $j\in\Nb$,
cf.~\cite[Section~1.27]{Var}).

\section{Definition of the approximate variation} \label{ss:dav}

\begin{definition} \label{def:av}
The \emph{approximate variation\/} of a function $f\!\in\! M^T$ is the one-parameter
family $\{V_\vep(f,T)\}_{\vep>0}$ of \emph{$\vep$-variations\/} defined,
for each $\vep>0$,~by
  \begin{equation} \label{e:av}
V_\vep(f,T)=\inf\,\{V(g,T):\mbox{$g\in\BV(T;M)$ and $d_{\infty,T}(f,g)\le\vep$}\}
  \end{equation}
(with the convention that $\inf\es=\infty$).
\end{definition}

The notion of $\vep$-variation, which plays a crucial role in this paper, is originally due
to Fra{\v n}kov{\'a} \cite[Definition~3.2]{Fr} for $T=I=[a,b]$ and $M=\Rb^N$. It was
also considered and extended in \cite[Sections~4, 6]{Studia17} to any $T\subset\Rb$ and
metric space $(M,d)$, and \cite{JMAA17} for metric space valued functions of two variables.

A few comments concerning Definition~\ref{def:av} are in order. Sometimes it is
convenient to rewrite \eq{e:av} as $V_\vep(f,T)=\inf\{V(g,T):g\in G_{\vep,T}(f)\}$,
where
  \begin{equation*}
G_{\vep,T}(f)=\{g\in\BV(T;M):d_{\infty,T}(f,g)\le\vep\}.
  \end{equation*}
So, we obtain the value $V_\vep(f,T)$ if we ``minimize'' the lower semicontinuous
functional $g\mapsto V(g,T)$ over the metric subspace $G_{\vep,T}(f)$ of $\BV(T;M)$.
Clearly, $V_\vep(f,T)\in[0,\infty]$, and the value $V_\vep(f,T)$ does not change if
we replace condition $g\in\mbox{\rm BV}(T;M)$ at the right-hand side of \eq{e:av}
by less restrictive conditions $g\in M^T$ or $g\in\Bd(T;M)$.

Condition $V_\vep(f,T)=\infty$ simply means that $G_{\vep,T}(f)=\es$, i.e.,
  \begin{equation} \label{e:besk}
\mbox{$V(g,T)=\infty$ \,for all \,$g\in M^T$ \,such that \,$d_{\infty,T}(f,g)\le\vep$.}
  \end{equation}

The finiteness of $V_\vep(f,T)$ is equivalent to the following: for any number
$\eta>V_\vep(f,T)$ there is a function $g\in\BV(T;M)$, depending on $\vep$ and $\eta$,
such that $d_{\infty,T}(f,g)\le\vep$ and $V_\vep(f,T)\le V(g,T)\le\eta$.
Given $k\in\Nb$, setting $\eta=V_\vep(f,T)+(1/k)$, we find that
there is $g_k^\vep\in\BV(T;M)$ such that
  \begin{equation} \label{e:fin}
d_{\infty,T}(f,g_k^\vep)\le\vep\quad\mbox{and}\quad
V_\vep(f,T)\le V(g_k^\vep,T)\le V_\vep(f,T)+(1/k);
  \end{equation}
in particular, \eq{e:fin} implies $V_\vep(f,T)=\lim_{k\to\infty}V(g_k^\vep,T)$.

Given $\vep>0$, condition $V_\vep(f,T)=0$ is characterized as follows (cf.~\eq{e:fin}):
  \begin{equation} \label{e:zer}
\mbox{$\exists\,\{g_k\}\!\subset\!\BV(T;M)$ such that
$\D\sup_{k\in\Nb}d_{\infty,T}(f,g_k)\!\le\!\vep$ and
$\D\lim_{k\to\infty}\!V(g_k,T)\!=\!0$.}
  \end{equation}
In particular, if $g_k=c$ is a constant function on $T$ for all $k\in\Nb$, we have:
  \begin{equation} \label{e:ze1}
\mbox{if \,$d_{\infty,T}(f,c)\le\vep$, \,then \,$V_\vep(f,T)=0$.}
  \end{equation}
This is the case when $|f(T)|\le\vep$; more explicitly, \eq{e:ze1} implies
  \begin{equation} \label{e:zero}
\mbox{if \,$\vep>0$ \,and \,$|f(T)|\le\vep$, \,then \,$V_\vep(f,T)=0$.}
  \end{equation}
In fact, fixing $t_0\in T$, we may define a constant function by $c(t)=f(t_0)$
for all $t\in T$, so that $d_{\infty,T}(f,c)\le|f(T)|\le\vep$.

The lower bound $|f(T)|$ for $\vep$ in \eq{e:zero} can be refined provided $f\in M^T$
satisfies certain additional assumptions. By \eq{e:1s2}, $|f(T)|\le2d_{\infty,T}(f,c)$ for
every constant function $c\in M^T$.
 Now, if $|f(T)|=2d_{\infty,T}(f,c)$ for some $c$, we have:
  \begin{equation} \label{e:zero2}
\mbox{if \,$\vep>0$ \,and \,$\vep\ge|f(T)|/2$, \,then \,$V_\vep(f,T)=0$.}
  \end{equation}
To see this, note that $d_{\infty,T}(f,c)=|f(T)|/2\le\vep$ and apply \eq{e:ze1}.

The number $|f(T)|/2$ in \eq{e:zero2} is the \emph{best possible\/} lower bound
for $\vep$, for which we may have $V_\vep(f,T)=0$; in fact, by Lemma~\ref{l:71}(b)
(see below), if $V_\vep(f,T)=0$, then $|f(T)|\le2\vep$, i.e., $\vep\ge|f(T)|/2$.
In other words, if $0<\vep<|f(T)|/2$, then $V_\vep(f,T)\ne0$.

To present an example of condition $|f(T)|=2d_{\infty,T}(f,c)$, suppose $f\in M^T$
has only two values, i.e., $f(T)=\{x,y\}$ for some $x,y\in M$, $x\ne y$. Then,
the mentioned condition is of the form
  \begin{equation} \label{e:2max}
d(x,y)=2\max\{d(x,c),d(y,c)\}\quad\mbox{for \,some}\quad c\in M.
  \end{equation}
Condition \eq{e:2max} is satisfied for such $f$ if, for instance, $(M,\|\cdot\|)$
is a \emph{normed linear space\/} over $\mathbb{K}=\Rb$ or $\mathbb{C}$
(always equipped) with the \emph{induced metric\/} $d(u,v)=\|u-v\|$, $u,v\in M$.
\label{p:nls}
In fact, we may set $c(t)=c=(x+y)/2$, $t\in T$. Note that \eq{e:2max} is concerned
with a certain form of `convexity' of metric space $(M,d)$ (cf.~%
\cite[Example~1]{Studia17}).

If $f(T)=\{x,y,z\}$, condition $|f(T)|=2d_{\infty,T}(f,c)$ is of the form
  \begin{equation*}
\max\{d(x,y),d(x,z),d(y,z)\}=2\max\{d(x,c),d(y,c),d(z,c)\}.
  \end{equation*}

Some elementary properties of $\vep$-variation(s) of $f\in M^T$ are gathered in

\begin{lemma} \label{l:ele}
{\rm(a)} The function $\vep\mapsto V_\vep(f,T):(0,\infty)\to[0,\infty]$ is
{\sl nonincreasing}, and so, the following inequalities hold\/
{\rm(}for one-sided limits{\rm):}
  \begin{equation} \label{e:osli}
\mbox{$V_{\vep+0}(f,T)\le V_\vep(f,T)\le V_{\vep-0}(f,T)$ \,in \,$[0,\infty]$
\,for \,all \,$\vep>0$.}
  \end{equation}
\par\hspace{-14pt}
{\rm(b)} If $\es\ne T_1\subset T_2\subset T$, then $V_\vep(f,T_1)\le V_\vep(f,T_2)$
  \,for \,all \,$\vep>0$.  
\end{lemma}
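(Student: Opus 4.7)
\medskip

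The plan is to reduce both parts to the monotonicity of the infimum with respect to enlarging the admissible set $G_{\vep,T}(f) = \{g \in \BV(T;M) : d_{\infty,T}(f,g) \le \vep\}$ from the remark following Definition~\ref{def:av}. Neither part requires new machinery beyond the definition of $V_\vep$ and the standard properties of Jordan variation, so I do not expect any serious obstacle; the role of the proof is essentially to check that the set-theoretic inclusions are clean.

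For part~(a), I would fix $0 < \vep_1 \le \vep_2$ and observe that if $g \in \BV(T;M)$ satisfies $d_{\infty,T}(f,g) \le \vep_1$, then it a fortiori satisfies $d_{\infty,T}(f,g) \le \vep_2$, giving the inclusion $G_{\vep_1,T}(f) \subset G_{\vep_2,T}(f)$. Taking the infimum of $V(g,T)$ over the larger set yields the smaller value, so $V_{\vep_2}(f,T) \le V_{\vep_1}(f,T)$; this holds regardless of finiteness, using the convention $\inf\es=\infty$. The one-sided limit inequalities in \eq{e:osli} are then just the standard monotone-function fact: for $\delta > 0$ we have $V_{\vep+\delta}(f,T) \le V_\vep(f,T) \le V_{\vep-\delta}(f,T)$, and letting $\delta \to 0+$ (the monotone limits exist in $[0,\infty]$) produces the desired chain.

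For part~(b), I would take $\es \ne T_1 \subset T_2 \subset T$ and any $g \in G_{\vep,T_2}(f)$. The restriction $g|_{T_1}$ lies in $\BV(T_1;M)$ because restriction cannot increase the variation, i.e.\ $V(g|_{T_1},T_1) \le V(g,T_2)$ (a standard monotonicity property of the Jordan variation, following from the fact that every partition of $T_1$ is a partition of $T_2$). Moreover, $d_{\infty,T_1}(f|_{T_1},g|_{T_1}) \le d_{\infty,T_2}(f,g) \le \vep$, so $g|_{T_1} \in G_{\vep,T_1}(f)$. Therefore $V_\vep(f,T_1) \le V(g|_{T_1},T_1) \le V(g,T_2)$, and taking the infimum over $g \in G_{\vep,T_2}(f)$ yields $V_\vep(f,T_1) \le V_\vep(f,T_2)$. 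If $G_{\vep,T_2}(f) = \es$, the right-hand side equals $\infty$ and there is nothing to prove.

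The one minor subtlety worth flagging in the write-up is that all inequalities must be read in the extended sense in $[0,\infty]$, since $V_\vep(f,T)$ may be infinite; this is handled uniformly by the convention $\inf\es = \infty$ and requires no case analysis beyond the trivial empty-admissible-set case in~(b).
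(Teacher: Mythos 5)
Your proof is correct and follows essentially the same route as the paper: part (a) is the inclusion $G_{\vep_1,T}(f)\subset G_{\vep_2,T}(f)$ of admissible sets, and part (b) is the observation that a competitor on $T_2$ restricts to a competitor on $T_1$ with no larger variation and no larger uniform distance (the paper phrases this slightly more tersely as $G_{\vep,T_2}(f)\subset G_{\vep,T_1}(f)$, leaving the restriction implicit). Your explicit handling of the empty-set convention and of the one-sided limits is consistent with what the paper takes for granted.
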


\proof
(a) Let $0<\vep_1<\vep_2$. Since $d_{\infty,T}(f,g)\le\vep_1$ implies
$d_{\infty,T}(f,g)\le\vep_2$ for $g\in M^T$, we get
$G_{\vep_1,T}(f)\subset G_{\vep_2,T}(f)$, and so, by \eq{e:av},
$V_{\vep_2}(f,T)\le V_{\vep_1}(f,T)$.

\smallbreak
(b) Given $g\in M^T$, $T_1\subset T_2$ implies
$d_{\infty,T_1}(f,g)\le d_{\infty,T_2}(f,g)$. So, for any $\vep>0$,
$G_{\vep,T_2}(f)\subset G_{\vep,T_1}(f)$, which, by \eq{e:av}, yields
$V_\vep(f,T_1)\le V_\vep(f,T_2)$.
\sq

\section{Variants of the approximate variation} \label{ss:vav}

Here we consider two modifications of the notion of approximate variation.

The first one is obtained if we replace the nonstrict inequality $\le\vep$ in \eq{e:av} by
the strict inequality $<\vep$; namely, given $f\in M^T$ and $\vep>0$, we set
  \begin{equation} \label{e:avm1}
V_\vep'(f,T)=\inf\,\{V(g,T):\mbox{$g\in\BV(T;M)$ such that $d_{\infty,T}(f,g)<\vep$}\}
  \end{equation}
($\inf\es=\infty$). Clearly, Lemma~\ref{l:ele} holds for $V_\vep'(f,T)$. More
specific properties of $V_\vep'(f,T)$ are exposed in the following

\begin{proposition} \label{pr:1}
Given $f\in M^T$, we have\/{\rm:}
  \begin{itemize}
\renewcommand{\itemsep}{0.0pt plus 0.5pt minus 0.25pt}
\item[{\rm(a)}] the function $\vep\mapsto V_\vep'(f,T)$, mapping $(0,\infty)$ into
  $[0,\infty]$, is continuous from the left on $(0,\infty);$
\item[{\rm(b)}] inequalities $V_{\vep_1}'(f,T)\le V_{\vep+0}'(f,T)\le V_\vep(f,T)%
  \le V_{\vep-0}(f,T)\le V_\vep'(f,T)$ hold for all\/ $0<\vep<\vep_1$.
  \end{itemize}
\end{proposition}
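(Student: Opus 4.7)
The plan is to derive both parts from elementary set inclusions between the two admissible families
\[
G_{\vep,T}(f)=\{g\in\BV(T;M):d_{\infty,T}(f,g)\le\vep\}
\quad\text{and}\quad
G_{\vep,T}'(f)=\{g\in\BV(T;M):d_{\infty,T}(f,g)<\vep\},
\]
together with the monotonicity of $\vep\mapsto V_\vep(f,T)$ from Lemma~\ref{l:ele}(a) and its analogue for $V_\vep'$, which was noted just after the definition~\eq{e:avm1}. Every inequality in the chain will follow by combining one of the inclusions above with a one-sided limit on a monotone family.

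For (b) I would verify the four links in turn. The trivial inclusion $G_{\vep,T}'(f)\subset G_{\vep,T}(f)$ gives $V_\vep(f,T)\le V_\vep'(f,T)$, and $V_\vep(f,T)\le V_{\vep-0}(f,T)$ is Lemma~\ref{l:ele}(a). For the remaining link $V_{\vep-0}(f,T)\le V_\vep'(f,T)$, I would observe that for every $\delta\in(0,\vep)$, $d_{\infty,T}(f,g)\le\delta<\vep$ forces $G_{\delta,T}(f)\subset G_{\vep,T}'(f)$, so $V_\vep'(f,T)\le V_\delta(f,T)$; letting $\delta\uparrow\vep$ on the nonincreasing family delivers the bound. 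Symmetrically, for any $\vep_2\in(\vep,\vep_1)$ the inclusion $G_{\vep,T}(f)\subset G_{\vep_2,T}'(f)$ (since $d\le\vep<\vep_2$) yields $V_{\vep_2}'(f,T)\le V_\vep(f,T)$; letting $\vep_2\downarrow\vep$ produces $V_{\vep+0}'(f,T)\le V_\vep(f,T)$, and the leftmost link $V_{\vep_1}'(f,T)\le V_{\vep+0}'(f,T)$ comes from the monotonicity of $V'$ applied to $\vep<\vep_2<\vep_1$ after passing to the limit $\vep_2\downarrow\vep$.

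For (a), the inequality $V_\vep'(f,T)\le V_{\vep-0}'(f,T)$ is immediate from the monotonicity of $V'$, so left-continuity reduces to $V_{\vep-0}'(f,T)\le V_\vep'(f,T)$. If $V_\vep'(f,T)=\infty$ there is nothing to prove (monotonicity forces $V_\delta'(f,T)=\infty$ for every $\delta<\vep$), so assume it finite and pick any $g\in G_{\vep,T}'(f)$. Setting $\delta_g:=d_{\infty,T}(f,g)<\vep$, for every $\vep'\in(\delta_g,\vep)$ we still have $g\in G_{\vep',T}'(f)$, hence $V_{\vep'}'(f,T)\le V(g,T)$. Letting $\vep'\uparrow\vep$ on the nonincreasing family yields $V_{\vep-0}'(f,T)\le V(g,T)$, and the infimum over $g$ finishes the proof.

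The only delicate point is the bookkeeping between $\le$ and $<$: the proof of (a) crucially exploits the slack $\vep-d_{\infty,T}(f,g)>0$ available in the strict version $V_\vep'$ but not in $V_\vep$, and that asymmetry is precisely the reason $V_\vep'$ is forced to be left-continuous while $V_\vep$ need only satisfy the one-sided inequality in~\eq{e:osli}. Apart from this point, every step collapses to a one-sided limit of a monotone function.
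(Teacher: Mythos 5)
Your proof of (a) and of the first three links in the chain of (b) is correct and follows essentially the same route as the paper: inclusions between the admissible families combined with one-sided limits of the monotone functions $\vep\mapsto V_\vep(f,T)$ and $\vep\mapsto V_\vep'(f,T)$. However, your derivation of the last link, $V_{\vep-0}(f,T)\le V_\vep'(f,T)$, establishes the \emph{reverse} inequality. From the inclusion $G_{\delta,T}(f)\subset\{g\in\BV(T;M):d_{\infty,T}(f,g)<\vep\}$ for $\delta<\vep$ you correctly deduce $V_\vep'(f,T)\le V_\delta(f,T)$ (the infimum over the larger set is the smaller one), and letting $\delta\uparrow\vep$ along the nonincreasing family $\delta\mapsto V_\delta(f,T)$ yields $V_\vep'(f,T)\le V_{\vep-0}(f,T)$ --- which is not the asserted bound. (Both inequalities happen to be true, so in fact $V_\vep'(f,T)=V_{\vep-0}(f,T)$; but your argument only delivers the half that the proposition does not claim.)

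The gap is easy to close, in either of two ways. You can argue as you did in (a): for $g\in\BV(T;M)$ with $\delta_g=d_{\infty,T}(f,g)<\vep$ and any $\delta\in[\delta_g,\vep)$ one has $g\in G_{\delta,T}(f)$, hence $V_{\vep-0}(f,T)\le V_\delta(f,T)\le V(g,T)$, and taking the infimum over all such $g$ gives $V_{\vep-0}(f,T)\le V_\vep'(f,T)$. Alternatively, do what the paper does: from $V_{\vep'}(f,T)\le V_{\vep'}'(f,T)$ for $0<\vep'<\vep$ (your first, trivial inclusion), pass to the limit as $\vep'\to\vep-0$ and invoke the left-continuity of $V'$ proved in item (a) to conclude $V_{\vep-0}(f,T)\le V_{\vep-0}'(f,T)=V_\vep'(f,T)$.
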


\proof
(a) In view of \eq{e:osli} for $V_\vep'(f,T)$, given $\vep>0$, it suffices to show that
$V_{\vep-0}'(f,T)\le V_\vep'(f,T)$ provided $V_\vep'(f,T)<\infty$. By \eq{e:avm1},
for any number $\eta>V_\vep'(f,T)$ there is $g=g_{\vep,\eta}\in\BV(T;M)$ such that
$d_{\infty,T}(f,g)<\vep$ and $V(g,T)\le\eta$. If a number $\vep'$ is such that
$d_{\infty,T}(f,g)<\vep'<\vep$, then \eq{e:avm1}
implies $V_{\vep'}'(f,T)\le V(g,T)\le\eta$. Passing to the limit as $\vep'\to\vep-0$,
we get $V_{\vep-0}'(f,T)\le\eta$ for all $\eta>V_\vep'(f,T)$, and so,
$V_{\vep-0}'(f,T)\le V_\vep'(f,T)$.

(b) To prove the first inequality, we note that $V_{\vep_1}'(f,T)\le V_{\vep'}'(f,T)$ for
all $\vep'$ with $\vep<\vep'<\vep_1$. It remains to pass to the limit as $\vep'\to\vep+0$.

For the second inequality, let $g\in G_{\vep,T}(f)$, i.e., $g\in\BV(T;M)$ and
$d_{\infty,T}(f,g)\le\vep$. Then, for any number $\vep'$ such that $\vep<\vep'$,
by virtue of \eq{e:avm1}, $V_{\vep'}'(f,T)\le V(g,T)$, and so, as $\vep'\to\vep+0$,
$V_{\vep+0}'(f,T)\le V(g,T)$. Taking the infimum over all $g\in G_{\vep,T}(f)$,
we obtain the second inequality.

The third inequality is a consequence of \eq{e:osli}.

Since $\{g\in\BV(T;M):d_{\infty,T}(f,g)<\vep\}\subset G_{\vep,T}(f)$, we have
$V_\vep(f,T)\le V_\vep'(f,T)$. Replacing $\vep$ by $\vep'$ with $0<\vep'<\vep$,
we get $V_{\vep'}(f,T)\le V_{\vep'}'(f,T)$, and so, passing to the limit as $\vep'\to\vep-0$
and taking into account item (a) above, we arrive at the fourth inequality.
\sq

In contrast to Proposition~\ref{pr:1}(a), it will be shown in Lemma~\ref{l:proper}(a) that
the function $\vep\mapsto V_\vep(f,T)$ is continuous from the right on $(0,\infty)$
\emph{only\/} under the additional assumption on the metric space $(M,d)$
(to be \emph{proper}).

In the case when $T=I=[a,b]$, the second variant of the approximate variation is
obtained if we replace the set of functions of bounded variation $\BV(I;M)$ in
\eq{e:av} by the set of step functions $\mbox{\rm St}(I;M)$: given $f\in M^I$,
  \begin{equation} \label{e:avm2}
V_\vep^s(f,I)=\inf\,\{V(g,I):\mbox{$g\in\mbox{\rm St}(I;M)$ and
$d_{\infty,I}(f,g)\le\vep$}\},\quad\vep>0
  \end{equation}
($\inf\es=\infty$). Clearly, $V_\vep^s(f,I)$ has the properties from Lemma~\ref{l:ele}.

\begin{proposition} \label{pr:2}
$V_{\vep+0}^s(f,I)\le V_\vep(f,I)\le V_\vep^s(f,I)$ for all $f\in M^I$ and $\vep>0$.
\end{proposition}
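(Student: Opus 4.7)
The second inequality is immediate from the containment \eqref{e:StBV}, i.e.\ $\mbox{\rm St}(I;M)\subset\BV(I;M)$: the infimum in \eqref{e:avm2} is taken over a smaller family than the infimum in \eqref{e:av} (intersected with the common constraint $d_{\infty,I}(f,g)\le\vep$), hence cannot be smaller. So I would dispose of this inequality in a single line.

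For the first inequality $V_{\vep+0}^s(f,I)\le V_\vep(f,I)$, the plan is to exploit the uniform approximation of BV functions by step functions with \emph{controlled variation}, namely the parenthetical remark just after \eqref{e:stR}: for every $g\in\BV(I;M)$ there exists $\{g_j\}\subset\mbox{\rm St}(I;M)$ with $g_j\rightrightarrows g$ on $I$ and $V(g_j,I)\le V(g,I)$ for all $j\in\Nb$. The case $V_\vep(f,I)=\infty$ being trivial, I assume $V_\vep(f,I)<\infty$, pick an arbitrary $g\in G_{\vep,I}(f)\subset\BV(I;M)$, and feed it to this approximation result.

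Now fix $\delta>0$. From $g_j\rightrightarrows g$ we have $d_{\infty,I}(g,g_j)\le\delta$ for all sufficiently large $j$, and then by the triangle inequality
\[
d_{\infty,I}(f,g_j)\le d_{\infty,I}(f,g)+d_{\infty,I}(g,g_j)\le\vep+\delta,
\]
so $g_j$ is admissible in the infimum defining $V_{\vep+\delta}^s(f,I)$. Consequently, $V_{\vep+\delta}^s(f,I)\le V(g_j,I)\le V(g,I)$. Taking the infimum over $g\in G_{\vep,I}(f)$ gives $V_{\vep+\delta}^s(f,I)\le V_\vep(f,I)$. Letting $\delta\to 0+$ and using the monotonicity of $\vep\mapsto V_\vep^s(f,I)$ (the analogue of Lemma~\ref{l:ele}(a) already noted for $V_\vep^s$), the right-hand limit $V_{\vep+0}^s(f,I)$ exists in $[0,\infty]$ and satisfies $V_{\vep+0}^s(f,I)\le V_\vep(f,I)$.

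There is no real obstacle here; the only subtlety worth flagging is that one must pass through the auxiliary parameter $\vep+\delta$ rather than try to stay at $\vep$, because the uniform approximation $g_j\rightrightarrows g$ only yields $d_{\infty,I}(f,g_j)\le\vep+\delta$ and not $d_{\infty,I}(f,g_j)\le\vep$ in general. This is precisely why the conclusion is stated with the right limit $V_{\vep+0}^s$ rather than with $V_\vep^s$ itself, and it is also the structural reason why sharper equality cannot be expected without further hypotheses on $(M,d)$, as is already apparent from the analogous right-continuity issues discussed for $V_\vep(f,T)$ before Lemma~\ref{l:proper}.
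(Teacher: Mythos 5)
Your proof is correct and follows essentially the same route as the paper: the right-hand inequality via $\mbox{\rm St}(I;M)\subset\BV(I;M)$, and the left-hand one by approximating an admissible $g\in G_{\vep,I}(f)$ with step functions $g_j\rightrightarrows g$ satisfying $V(g_j,I)\le V(g,I)$ and passing through the shifted parameter $\vep+\delta$ before letting $\delta\to+0$. If anything, your version is marginally tidier, since the paper's detour through $\lim_{j\to\infty}V(g_j,I)=V(g,I)$ (via lower semicontinuity) is not needed once one has the uniform bound $V(g_j,I)\le V(g,I)$ for all $j$.
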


\proof
By \eq{e:StBV},
$\{g\in\mbox{\rm St}(I;M):d_{\infty,I}(f,g)\le\vep\}\subset G_{\vep,I}(f)$,
and so, \eq{e:av} and \eq{e:avm2} imply the right-hand side inequality.

In order to prove the left-hand side inequality, we may assume that $V_\vep(f,I)<\infty$.
By \eq{e:av}, for any $\eta>V_\vep(f,I)$ there is $g=g_{\vep,\eta}\in\BV(I;M)$
such that $d_{\infty,I}(f,g)\le\vep$ and $V(g,I)\le\eta$. Since
$g\in\BV(I;M)\subset\Reg(I;M)$, by virtue of \eq{e:stR}, there is a sequence
$\{g_j\}\subset\mbox{\rm St}(I;M)$ such that $g_j\rra g$ on~$I$ and
$V(g_j,I)\le V(g,I)$ for all natural~$j$. Hence $\limsup_{j\to\infty}V(g_j,I)\le V(g,I)$
and, by property (V.3) (p.~\pageref{p:V}), $V(g,I)\le\liminf_{j\to\infty}V(g_j,I)$,
and so, $\lim_{j\to\infty}V(g_j,I)=V(g,I)$. Now, let $\vep'>0$ be arbitrary. Then,
there is $j_1=j_1(\vep')\in\Nb$ such that $V(g_j,I)\le V(g,I)+\vep'$ for all $j\ge j_1$,
and, since $g_j\rra g$ on $I$, there is $j_2=j_2(\vep')\in\Nb$ such that
$d_{\infty,I}(g_j,g)\le\vep'$ for all $j\ge j_2$. Noting that, for all $j\ge\max\{j_1,j_2\}$,
$g_j\in\mbox{\rm St}(I;M)$ and
  \begin{equation*}
d_{\infty,I}(f,g_j)\le d_{\infty,I}(f,g)+d_{\infty,I}(g,g_j)\le\vep+\vep',
  \end{equation*}
by the definition \eq{e:avm2} of $V_\vep^s(f,I)$, we get
  \begin{equation*}
V_{\vep+\vep'}^s(f,I)\le V(g_j,I)\le V(g,I)+\vep'\le\eta+\vep'.
  \end{equation*}
Passing to the limit as $\vep'\to+0$, we find $V_{\vep+0}^s(f,I)\le\eta$ for all
$\eta>V_\vep(f,I)$, and so, $V_{\vep+0}^s(f,I)\le V_\vep(f,I)$.
\sq

Propositions \ref{pr:1}(b) and \ref{pr:2} show that the quantities $V_\vep'(f,T)$ and
$V_\vep^s(f,I)$ are somehow `equivalent' to $V_\vep(f,T)$, so their theories will no longer
be developed in the sequel, and the theory of $V_\vep(f,T)$ is sufficient for our purposes.

\section{Properties of the approximate variation} \label{ss:pro}

In order to effectively calculate the approximate variation of a function, we need more
of its properties. Item (a) in the next lemma justifies the term `approximate variation',
introduced in Definition~\ref{def:av}.

\begin{lemma} \label{l:71}
Given $f\in M^T$, we have\/{\rm:}\par\vspace{-6pt}
  \begin{itemize}
\item[{\rm(a)}] $\lim_{\vep\to+0}V_\vep(f,T)=\sup_{\vep>0}V_\vep(f,T)=V(f,T);$
\item[{\rm(b)}] $|f(T)|\le V_\vep(f,T)+2\vep$ \,for all \,$\vep>0;$
\item[{\rm(c)}] $|f(T)|=\infty$ {\rm(}i.e., $f\!\notin\!\Bd(T;M)${\rm)}
  if and only if \,$V_\vep(f,T)\!=\!\infty$ for all $\vep\!>\!0;$
\item[{\rm(d)}] $\inf_{\vep>0}(V_\vep(f,T)+\vep)\le|f(T)|\le
  \inf_{\vep>0}(V_\vep(f,T)+2\vep);$
\item[{\rm(e)}] $|f(T)|=0$ {\rm(}i.e.,\,$f$ is constant{\rm)}
   if and only if\, $V_\vep(f,T)\!=\!0$ for all $\vep\!>\!0;$ 
\item[{\rm(f)}] if \,$0<\vep<|f(T)|$, then
  \,$\max\{0,|f(T)|-2\vep\}\le V_\vep(f,T)\le V(f,T)$.
  \end{itemize}
\end{lemma}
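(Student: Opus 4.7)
The plan is to prove~(b) first and then leverage it, since items (c)--(f) are short combinations of (b) with the elementary inequalities \eq{e:1s2}, \eq{e:ze1}, and \eq{e:zero} already established; only (a) requires a genuinely new ingredient, namely the lower semicontinuity property~(V.3). For (b), if $V_\vep(f,T)=\infty$ the inequality is vacuous; otherwise the near-minimizers $g_k^\vep\in\BV(T;M)$ from \eq{e:fin} satisfy $d_{\infty,T}(f,g_k^\vep)\le\vep$, so property~(V.1) gives $|g_k^\vep(T)|\le V(g_k^\vep,T)$, and \eq{e:1s2} yields $|f(T)|\le|g_k^\vep(T)|+2\vep\le V(g_k^\vep,T)+2\vep$; letting $k\to\infty$ and using the second part of \eq{e:fin} delivers~(b).

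For~(a), Lemma~\ref{l:ele}(a) already gives $\lim_{\vep\to0^+}V_\vep(f,T)=\sup_{\vep>0}V_\vep(f,T)$. The estimate $V_\vep(f,T)\le V(f,T)$ is trivial if $V(f,T)=\infty$, and otherwise follows by taking $g=f$ as a competitor in \eq{e:av}. For the reverse $V(f,T)\le\sup_\vep V_\vep(f,T)$, monotonicity splits the argument into two cases: either $V_\vep(f,T)<\infty$ for every $\vep>0$, in which case I pick $\vep_n\downarrow0$ together with near-minimizers $g_n\in\BV(T;M)$ satisfying $V(g_n,T)\le V_{\vep_n}(f,T)+1/n$, observe that $g_n\rra f$ on $T$ forces $g_n\to f$ pointwise, and apply~(V.3) to obtain $V(f,T)\le\liminf_n V(g_n,T)\le\sup_\vep V_\vep(f,T)$; or else $V_{\vep_0}(f,T)=\infty$ for some $\vep_0>0$, in which case $\sup_\vep V_\vep(f,T)=\infty$ and the bound is automatic.

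The remaining parts are short deductions. For~(c), $(\Rightarrow)$ is the contrapositive of~(b), while $(\Leftarrow)$ is the contrapositive of \eq{e:zero} applied with any $\vep\ge\max\{|f(T)|,1\}>0$. For~(e), $(\Leftarrow)$ is~(b) with $\vep\to0^+$, and $(\Rightarrow)$ is \eq{e:ze1} with $c=f$. For~(f), the upper bound is the estimate $V_\vep(f,T)\le V(f,T)$ already used in~(a), and the lower bound repackages~(b) together with the trivial $V_\vep(f,T)\ge0$. For~(d), the right-hand inequality is~(b); for the left, the case $|f(T)|=\infty$ is trivial via~(c), the case $|f(T)|>0$ follows by taking $\vep=|f(T)|$ in \eq{e:zero} so that $V_\vep(f,T)+\vep=|f(T)|$, and the case $|f(T)|=0$ follows from~(e) since then $\inf_{\vep>0}(V_\vep(f,T)+\vep)=\inf_{\vep>0}\vep=0=|f(T)|$. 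The only real obstacle is the lower estimate in~(a), which is the unique place where~(V.3) must intervene and where the choice of near-minimizers $g_n$ has to be coordinated with parameters $\vep_n\to0^+$ so that uniform, hence pointwise, convergence of $g_n$ to $f$ is ensured; everything else is routine chasing of definitions.
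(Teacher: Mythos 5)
Your proposal is correct and follows essentially the same route as the paper: part (b) via near-minimizers combined with (V.1) and \eq{e:1s2}, part (a) via (V.3) applied to near-minimizers that converge uniformly (hence pointwise) to $f$, and parts (c)--(f) as short deductions from (a), (b), \eq{e:ze1} and \eq{e:zero}. The only deviations are cosmetic, e.g.\ you obtain the left inequality in (d) by evaluating at $\vep=|f(T)|$ rather than taking an infimum over $\vep>|f(T)|$, and you derive ($\Leftarrow$) in (e) directly from (b) instead of routing through (d); both variants are sound.
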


\proof
(a) By Lemma~\ref{l:ele}(a), $C\equiv\lim_{\vep\to+0}V_\vep(f,T)%
=\sup_{\vep>0}V_\vep(f,T)$ is well-defined in $[0,\infty]$. First, we assume that
$f\in\BV(T;M)$. Since $f\in G_{\vep,T}(f)$ for all $\vep>0$, definition \eq{e:av} implies
$V_\vep(f,T)\le V(f,T)$ for all $\vep>0$, and so, $C\le V(f,T)<\infty$. Now, we prove
that $V(f,T)\le C$. By definition of $C$, 
for every $\eta>0$, there is $\delta=\delta(\eta)>0$
such that $V_\vep(f,T)<C+\eta$ for all $\vep\in(0,\delta)$. Let
$\{\vep_k\}_{k=1}^\infty\subset(0,\delta)$ be such that $\vep_k\to0$ as $k\to\infty$.
For every $k\in\Nb$, the definition of $V_{\vep_k}(f,T)<C+\eta$ implies the existence
of $g_k\in\BV(T;M)$ such that $d_{\infty,T}(f,g_k)\le\vep_k$ and $V(g_k,T)\le C+\eta$.
Since $\vep_k\to0$, $g_k\rra f$ on $T$, and so, property (V.3) on p.\,\pageref{p:V} yields
  \begin{equation*}
V(f,T)\le\liminf_{k\to\infty}V(g_k,T)\le C+\eta\quad\mbox{for \,all}\quad\eta>0,
  \end{equation*}
whence $V(f,T)\le C<\infty$. Thus, $C$ and $V(f,T)$ are finite or not simultaneously,
and $C=V(f,T)$, which establishes (a).

(b) The inequality is clear if $V_\vep(f,T)=\infty$, so we assume that $V_\vep(f,T)$ is
finite. By definition \eq{e:av}, for every $\eta>V_\vep(f,T)$ there is
$g=g_\eta\in\BV(T;M)$ such that $\mbox{$d_{\infty,T}(f,g)\le\vep$}$ and
$V(g,T)\le\eta$. Inequality \eq{e:1s2} and property (V.1) on p.\,\pageref{p:V} imply
  \begin{equation*}
|f(T)|\le|g(T)|+2d_{\infty,T}(f,g)\le V(g,T)+2\vep\le\eta+2\vep.
  \end{equation*}
It remains to take into account the arbitrariness of $\eta>V_\vep(f,T)$.

(c) The necessity is a consequence of item (b). To prove the sufficiency, assume, on the 
contrary, that $|f(T)|\!<\!\infty$. Then, by \eq{e:zero}, for any \mbox{$\vep\!>\!|f(T)|$},
we have $V_\vep(f,T)=0$, which contradicts the assumption $V_\vep(f,T)=\infty$.

(d) The right-hand side inequality is equivalent to item (b). To establish the left-hand
side inequality, we note that if $|f(T)|<\infty$ and $\vep>|f(T)|$, then, by \eq{e:zero},
$V_\vep(f,T)=0$, and so,
  \begin{equation*}
|f(T)|=\inf_{\vep>|f(T)|}\vep=\inf_{\vep>|f(T)|}(V_\vep(f,T)+\vep)
\ge\inf_{\vep>0}(V_\vep(f,T)+\vep).
  \end{equation*}
Now, if $|f(T)|=\infty$, then, by item (c), $V_\vep(f,T)+\vep=\infty$ for all $\vep>0$,
and so, $\inf_{\vep>0}(V_\vep(f,T)+\vep)=\infty$.

(e) ($\Rightarrow\!$) Since $f$ is constant on $T$, $f\in\BV(T;M)$ and
$d_{\infty,T}(f,f)=0<\vep$, and so, definition \eq{e:av} implies
$0\le V_\vep(f,T)\le V(f,T)=0$.

($\!\Leftarrow$) By virtue of item (d), if $V_\vep(f,T)=0$ for all $\vep>0$, then
$|f(T)|=0$.

(f) We may assume that $f\in\Bd(T;M)$. By item (a), $V_\vep(f,T)\le V(f,T)$, and
by item (b), $|f(T)|-2\vep\le V_\vep(f,T)$ for all $0<\vep<|f(T)|/2$. It is also clear
that $0\le V_\vep(f,T)$ for all $|f(T)|/2\le\vep<|f(T)|$.
\sq

\begin{remark} \label{r:8one} \rm
By \eq{e:zero} and Lemma~\ref{l:71}(c),\,(e), the $\vep$-variation $V_\vep(f,T)$,
initially defined for all $\vep>0$ and $f\in M^T$, is \emph{completely characterized\/}
whenever $\vep>0$ and $f\in\Bd(T;M)$ are such that $0<\vep<|f(T)|$.

The sharpness of assertions in Lemma~\ref{l:71}(b), (d) is presented in
Example~\ref{ex:gDf}(b), (c), (d) on pp.~\pageref{p:L71b}--\pageref{p:36d}
(for (a),\,(b),\,(f), see Example~\ref{exa:t}).
\end{remark}

In order to get the first feeling of the approximate variation, we present an example
(which later on will be generalized, cf.\ Example~\ref{ex:1}).

\begin{example} \label{exa:t} \rm
Let $f:T=[0,1]\to[0,1]$ be given by $f(t)=t$. We are going to evaluate $V_\vep(f,T)$,
$\vep>0$. Since $|f(T)|=1$, by \eq{e:zero}, $V_\vep(f,T)=0$ for all $\vep\ge1=|f(T)|$.
Moreover, if $c(t)\equiv1/2$ on $T$, then $|f(t)-c(t)|\le1/2$ for all $t\in T$, and so,
$V_\vep(f,T)=0$ for all $\vep\ge1/2$. Now, suppose $0<\vep<1/2$. By Lemma~%
\ref{l:71}(f), $V_\vep(f,T)\ge1-2\vep$. To establish the reverse inequality, define
$g:T\to\Rb$ by $g(t)=(1-2\vep)t+\vep$, $0\le t\le1$ (draw the graph on the plane).
Clearly, $g$ is increasing on $[0,1]$ and, for all $t\in[0,1]$,
  \begin{equation*}
f(t)-\vep=t-\vep=t-2\vep+\vep\le g(t)=t-2\vep t+\vep\le t+\vep=f(t)+\vep,
  \end{equation*}
i.e., $d_{\infty,T}(f,g)=\sup_{t\in T}|f(t)-g(t)|\le\vep$. It follows that
  \begin{equation*}
V(g,T)=g(1)-g(0)=(1-2\vep+\vep)-\vep=1-2\vep,
  \end{equation*}
and so, by definition \eq{e:av}, we get $V_\vep(f,T)\le V(g,T)=1-2\vep$. Thus,
  \begin{equation*}
\mbox{if \,$f(t)=t$, \,then}\,\,\, V_\vep(f,[0,1])=\left\{
  \begin{tabular}{ccr}
$\!\!1-2\vep$ & \mbox{if} & $0<\vep<1/2$,\\[2pt]
$\!\!0$ & \mbox{if} & $\vep\ge1/2$.
  \end{tabular}\right.
  \end{equation*}
\end{example}

\begin{lemma}[semi-additivity of the approximate variation] \label{l:mor}
Given $f\in M^T$, $\vep>0$, $t\in T$, if\/ $T_1=T\cap(-\infty,t]$ and\/
$T_2=T\cap[t,\infty)$,  then we have\/{\rm:}
  \begin{equation*}
V_\vep(f,T_1)+V_\vep(f,T_2)\le V_\vep(f,T)\le V_\vep(f,T_1)+V_\vep(f,T_2)+2\vep.
  \end{equation*}
\end{lemma}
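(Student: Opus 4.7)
The plan is to split the argument into the two inequalities. The left one arises from \emph{restricting} any near-optimal competitor $g$ on $T$ to the subsets $T_1,T_2$, using additivity of the Jordan variation. The right one arises from \emph{gluing} near-optimal competitors $g_1$ on $T_1$ and $g_2$ on $T_2$ into a single function on $T$; the constant $2\vep$ then reflects the discrepancy between $g_1$ and $g_2$ at the shared point~$t$.

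For the left-hand inequality, I would fix $g\in G_{\vep,T}(f)$ (assuming $V_\vep(f,T)<\infty$, else the claim is trivial). Since $d_{\infty,T_i}(f,g)\le d_{\infty,T}(f,g)\le\vep$, the restriction $g|_{T_i}$ belongs to $G_{\vep,T_i}(f)$, and hence $V_\vep(f,T_i)\le V(g,T_i)$ by \eq{e:av}. Summing over $i=1,2$ and invoking property (V.2) gives $V_\vep(f,T_1)+V_\vep(f,T_2)\le V(g,T_1)+V(g,T_2)=V(g,T)$; taking the infimum over $g\in G_{\vep,T}(f)$ completes this half.

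For the right-hand inequality, I may assume both $V_\vep(f,T_i)$ are finite. Given $\eta>0$, pick $g_i\in G_{\vep,T_i}(f)$ with $V(g_i,T_i)\le V_\vep(f,T_i)+\eta$ and define $g\in M^T$ by $g:=g_1$ on $T_1$ and $g:=g_2$ on $T_2\setminus\{t\}$ (so $g(t)=g_1(t)$). The bound $d_{\infty,T}(f,g)\le\vep$ is immediate on each piece, and $V(g,T_1)=V(g_1,T_1)$. The main obstacle is the estimate of $V(g,T_2)$: on $T_2$ the function $g$ coincides with $g_2$ off the point $t$, but $g(t)=g_1(t)$ may differ from $g_2(t)$. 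Here the triangle inequality delivers the crucial bound $d(g_1(t),g_2(t))\le d(g_1(t),f(t))+d(f(t),g_2(t))\le 2\vep$, which is precisely where the constant $2\vep$ enters: any partition of $T_2$ either avoids $t$, in which case its sum is dominated by $V(g_2,T_2)$, or begins at $t=\min T_2$, in which case swapping $g_1(t)$ for $g_2(t)$ in the very first term costs at most $d(g_1(t),g_2(t))\le 2\vep$ and the sum is $\le V(g_2,T_2)+2\vep$. Thus $V(g,T_2)\le V(g_2,T_2)+2\vep$, and combining with (V.2) and the choice of the $g_i$ gives $V_\vep(f,T)\le V_\vep(f,T_1)+V_\vep(f,T_2)+2\eta+2\vep$; letting $\eta\to 0$ closes the argument.
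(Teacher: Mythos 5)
Your proposal is correct and follows essentially the same route as the paper: restriction plus additivity (V.2) for the lower bound, and gluing near-optimal competitors with $g(t):=g_1(t)$ for the upper bound, where the cost $d(g_1(t),g_2(t))\le d(g_1(t),f(t))+d(f(t),g_2(t))\le 2\vep$ is absorbed into the first term of any partition of $T_2$ starting at $t$ — exactly the paper's Step 3 estimate \eq{e:nin1}. No gaps.
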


\proof
1. First, we prove the left-hand side inequality. We may assume that
$V_\vep(f,T)<\infty$ (otherwise, the inequality is obvious). By definition \eq{e:av},
given $\eta>V_\vep(f,T)$, there is $g=g_\eta\in\BV(T;M)$ such that
$d_{\infty,T}(f,g)\le\vep$ and $V(g,T)\le\eta$. We set $g_1(s)=g(s)$ for all $s\in T_1$
and $g_2(s)=g(s)$ for all $s\in T_2$, and note that $g_1(t)=g(t)=g_2(t)$. Since,
for $i=1,2$, we have $d_{\infty,T_i}(f,g_i)\le d_{\infty,T}(f,g)\le\vep$ and
$g_i\in\BV(T_i;M)$, by \eq{e:av}, we find $V_\vep(f,T_i)\le V(g_i,T_i)$, and so,
the additivity property (V.2) of $V$ (p.~\pageref{p:V})~implies
  \begin{align*}
V_\vep(f,T_1)+V_\vep(f,T_2)&\le V(g_1,T_1)+V(g_2,T_2)=
  V(g,T_1)+V(g,T_2)\\[2pt]
&=V(g,T)\le\eta\quad\mbox{for \,all}\quad\eta>V_\vep(f,T).
  \end{align*}
This establishes the left-hand side inequality.

2. Now, we prove the right-hand side inequality. We may assume that $V_\vep(f,T_1)$
and $V_\vep(f,T_2)$ are finite (otherwise, our inequality becomes $\infty=\infty$).
We may also assume that $T\cap(-\infty,t)\ne\es$ and $T\cap(t,\infty)\ne\es$
(for, otherwise, we have $T_1=T\cap(-\infty,t]=\{t\}$ and $T_2=T$, or
\mbox{$T_2=T\cap[t,\infty)=\{t\}$} and $T_1=T$, respectively, and the inequality
is clear). By definition \eq{e:av}, for $i=1,2$, given $\eta_i>V_\vep(f,T_i)$, there
exists $g_i\in\BV(T_i;M)$ such that $d_{\infty,T_i}(f,g_i)\le\vep$ and $V(g_i,T_i)\le\eta_i$.
Given $u\in M$ (to be specified below), we define $g\in\BV(T;M)$ by
  \begin{equation*}
\mbox{$g(s)\!=\!g_1(s)$ if $s\in T\!\cap\!(-\infty,t)$, $g(t)\!=\!u$, and $g(s)\!=\!g_2(s)$
if $s\in T\!\cap\!(t,\infty)$.}
  \end{equation*}
Arguing with partitions of $T_i$ for $i=1,2$ (see step~3 below) and applying the
triangle inequality for $d$, we get
  \begin{equation} \label{e:nin1}
V(g,T_i)\le V(g_i,T_i)+d(g(t),g_i(t))\le\eta_i+d(u,g_i(t)).
  \end{equation}
By the additivity (V.2) of $V$, we find
  \begin{equation} \label{e:nin2}
V(g,T)=V(g,T_1)+V(g,T_2)\le\eta_1+d(u,g_1(t))+\eta_2+d(u,g_2(t)).
  \end{equation}
Now, we set $u=g_1(t)$ (by symmetry, we may set $u=g_2(t)$ as well). Since
$g=g_1$ on $T_1=T\cap(-\infty,t]$ and $g=g_2$ on $T\cap(t,\infty)\subset T_2$, we get
  \begin{equation} \label{e:nin3}
d_{\infty,T}(f,g)\le\max\{d_{\infty,T_1}(f,g_1),d_{\infty,T_2}(f,g_2)\}\le\vep.
  \end{equation}
Noting that (cf.~\eq{e:nin2})
  \begin{align*}
d(u,g_2(t))&=d(g_1(t),g_2(t))\le d(g_1(t),f(t))+d(f(t),g_2(t))\\[2pt]
&\le d_{\infty,T_1}(g_1,f)+d_{\infty,T_2}(f,g_2)\le\vep+\vep=2\vep,
  \end{align*}
we conclude from \eq{e:av}, \eq{e:nin3} and \eq{e:nin2} that
  \begin{equation*}
V_\vep(f,T)\le V(g,T)\le\eta_1+\eta_2+2\vep.
  \end{equation*}
The arbitrariness of numbers $\eta_1\!>\!V_\vep(f,T_1)$ and $\eta_2\!>\!V_\vep(f,T_2)$
proves the desired inequality.

3. \emph{Proof of\/ \eq{e:nin1}} for $i=1$ (the case $i=2$ is similar). Let
$\{t_k\}_{k=0}^m\subset T_1$ be a partition of $T_1$, i.e.,
$t_0<t_1<\dots<t_{m-1}<t_m=t$. Since $g(s)=g_1(s)$ for $s\in T$, $s<t$, we have:
  \begin{align*}
\sum_{k=1}^md(g(t_k),g(t_{k-1}))&=\sum_{k=1}^{m-1}d(g(t_k),g(t_{k-1}))
  +d(g(t_m),g(t_{m-1}))\\
&=\sum_{k=1}^{m-1}d(g_1(t_k),g_1(t_{k-1}))+d(g_1(t_m),g_1(t_{m-1}))\\
&\qquad+d(g(t_m),g(t_{m-1}))-d(g_1(t_m),g_1(t_{m-1}))\\[4pt]
&\le V(g_1,T_1)+|d(g(t),g_1(t_{m-1}))-d(g_1(t),g_1(t_{m-1}))|\\[4pt]
&\le V(g_1,T_1)+d(g(t),g_1(t)),
  \end{align*}
where the last inequality is due to the triangle inequality for $d$. Taking the supremum
over all partitions of $T_1$, we obtain the left-hand side inequality in \eq{e:nin1}
for $i=1$.
\sq

\begin{remark} \label{r:ifo}
The informative part of Lemma~\ref{l:mor} concerns the case when $f\in\Bd(T;M)$
and $0<\vep<|f(T)|$; if fact, if $\vep\ge|f(T)|$, then $\vep\ge|f(T_1)|$ and
$\vep\ge|f(T_2)|$, and so, by \eq{e:zero}, 
$V_\vep(f,T)=V_\vep(f,T_1)=V_\vep(f,T_2)=0$. The sharpness of the inequalities
in Lemma~\ref{l:mor} is shown in Example~\ref{ex:2}.
\end{remark}

Interestingly, the approximate variation characterizes regulated functions. The following
assertion is Fra{\v n}kov{\'a}'s result \cite[Proposition~3.4]{Fr} rewritten from $I=[a,b]$
and $M=\Rb^N$ to the case of an arbitrary metric space $(M,d)$ (which was
announced in \cite[equality~(4.2)]{Studia17}).

\begin{lemma} \label{l:Regc}
$\Reg(I;M)=\{f\in M^I:\mbox{$V_\vep(f,I)<\infty$ for all $\vep>0$.}\}$
\end{lemma}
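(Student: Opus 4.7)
The plan is to prove both inclusions using two facts already recorded in the excerpt: the characterization \eqref{e:stR} of regulated functions as uniform limits of step functions, and the closedness of $\Reg(I;M)$ under uniform convergence.

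For the inclusion $\Reg(I;M)\subseteq\{f\in M^I:V_\vep(f,I)<\infty\,\,\forall\,\vep>0\}$, I would fix $f\in\Reg(I;M)$ and an arbitrary $\vep>0$. By \eqref{e:stR}, there is a sequence $\{f_j\}\subset\mbox{\rm St}(I;M)$ with $f_j\rra f$ on $I$, so for $j$ large enough we have $d_{\infty,I}(f,f_j)\le\vep$. Since $\mbox{\rm St}(I;M)\subset\BV(I;M)$ by \eqref{e:StBV}, the function $g=f_j$ is admissible in the infimum \eqref{e:av}, giving $V_\vep(f,I)\le V(f_j,I)<\infty$.

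For the reverse inclusion, assume $V_\vep(f,I)<\infty$ for every $\vep>0$. For each $k\in\Nb$, applying the finiteness of $V_{1/k}(f,I)$ together with \eqref{e:fin}, I obtain $g_k\in\BV(I;M)$ with $d_{\infty,I}(f,g_k)\le 1/k$. Hence $g_k\rra f$ on $I$ as $k\to\infty$. Since $\BV(I;M)\subset\Reg(I;M)$, each $g_k$ is regulated, and the excerpt records that $\Reg(I;M)$ is closed under uniform convergence; therefore $f\in\Reg(I;M)$.

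Neither direction really presents an obstacle: both rely on properties of $\Reg(I;M)$ already quoted on p.~\pageref{p:reg}. The only minor subtlety is to be explicit that the finiteness of $V_{1/k}(f,I)$ actually guarantees the existence of an admissible $g_k\in\BV(I;M)$ (i.e., that $G_{1/k,I}(f)\ne\es$), which is exactly the content of \eqref{e:fin}; with this observation, the uniform-approximation argument closes the proof cleanly.
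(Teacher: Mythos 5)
Your proof is correct. The inclusion $(\subset)$ is argued exactly as in the paper: a step function $\vep$-uniformly close to $f$, admissible by \eq{e:StBV}, bounds $V_\vep(f,I)$. For $(\supset)$, however, you take a genuinely different route from the paper's own proof: you use the indirect argument (which the paper itself records in Remark~\ref{r:indir} and attributes to Fra{\v n}kov{\'a}), namely that finiteness of $V_{1/k}(f,I)$ yields $g_k\in\BV(I;M)\subset\Reg(I;M)$ with $g_k\rra f$, and then you invoke the closedness of $\Reg(I;M)$ under uniform convergence. The paper instead gives a direct proof: it introduces the nondecreasing $\vep$-variation function $\vfi_\vep(t)=V_\vep(f,[a,t])$, uses the existence of its one-sided limits together with the semi-additivity Lemma~\ref{l:mor} to make $V_\vep(f,[s,t])$ small on short intervals near $\tau$, and then verifies the Cauchy condition via \eq{e:10}. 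Your version is shorter but leans on the (quoted, not proved) closedness of $\Reg(I;M)$ under uniform convergence; the paper's direct argument is self-contained within the approximate-variation machinery and, more importantly, transfers verbatim to an arbitrary set $T\subset\Rb$, which is exactly how the paper later obtains the inclusion \eq{e:supReg} needed in the proof of Theorem~\ref{t:SP}. Your one stated subtlety --- that $V_{1/k}(f,I)<\infty$ means $G_{1/k,I}(f)\ne\es$ --- is handled correctly via \eq{e:fin}.
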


\proof
($\subset$) If $f\!\in\!\Reg(I;M)$, then, by \eq{e:stR}, given $\vep\!>\!0$, there is
$g_\vep\!\in\!\mbox{\rm St}(I;M)$ such that $d_{\infty,I}(f,g_\vep)\le\vep$. Since
$g_\vep\in\BV(I;M)$, definition \eq{e:av} implies $V_\vep(f,I)\le V(g_\vep,I)<\infty$.

($\supset$) Suppose $f\in M^I$ and $V_\vep(f,I)<\infty$ for all $\vep>0$. Given
$a<\tau\le b$, let us show that $d(f(s),f(t))\to0$ as $s,t\to\tau-0$ (the arguments for
$a\le\tau'<b$ and the limit as $s,t\to\tau'+0$ are similar). Let $\vep>0$ be arbitrary.
We define the \emph{$\vep$-variation function\/} by $\vfi_\vep(t)=V_\vep(f,[a,t])$,
$t\in I$. By Lemma~\ref{l:ele}(b),
$0\le\vfi_\vep(s)\le\vfi_\vep(t)\le V_\vep(f,I)<\infty$ for all $s,t\in I$, $s\le t$, i.e.,
$\vfi_\vep:I\to[0,\infty)$ is bounded and nondecreasing, and so, the left limit
$\lim_{t\to\tau-0}\vfi_\vep(t)$ exists in $[0,\infty)$. Hence, there is
$\delta=\delta(\vep)\in(0,\tau-a]$ such that $|\vfi_\vep(t)-\vfi_\vep(s)|<\vep$
for all $s,t\in[\tau-\delta,\tau)$. Now, let $s,t\in[\tau-\delta,\tau)$, $s\le t$, be
arbitrary. Lemma~\ref{l:mor} (with $T_1=[a,s]$, $T_2=[s,t]$ and $T=[a,t]$) implies
$V_\vep(f,[s,t])\le\vfi_\vep(t)-\vfi_\vep(s)<\vep$. By the definition of $V_\vep(f,[s,t])$,
there is $g=g_\vep\in\BV([s,t];M)$ such that $d_{\infty,[s,t]}(f,g)\le\vep$ and
$V(g,[s,t])\le\vep$. Thus, by virtue of \eq{e:10},
  \begin{equation*}
d(f(s),f(t))\le d(g(s),g(t))+2d_{\infty,[s,t]}(f,g)\le V(g,[s,t])+2\vep\le3\vep.
  \end{equation*}
This completes the proof that $\lim_{I\ni s,t\to\tau-0}d(f(s),f(t))=0$.
\sq

\begin{remark} \label{r:indir}
We presented a direct proof of assertion $(\supset)$ in Lemma~\ref{l:Regc}. Indirectly,
we may argue as in \cite[Proposition~3.4]{Fr} as follows. Since, for each $k\in\Nb$,
$V_{1/k}(f,I)<\infty$, by definition \eq{e:av}, there is $g_k\in\BV(I;M)$ such that
$d_{\infty,I}(f,g_k)\le1/k$ (and $V(g_k,I)\le V_{1/k}(f,I)+(1/k)$). Noting that
$g_k\rra f$ on~$I$, each $g_k\in\Reg(I;M)$, and $\Reg(I;M)$ is closed with respect
to the uniform convergence, we get $f\in\Reg(I;M)$. An illustration of 
Lemma~\ref{l:Regc} is presented in Examples~\ref{ex:1} and \ref{ex:gDf}.
\end{remark}

Now we study the approximate variation in its interplay with the uniform convergence
of sequences of functions (see also Examples~\ref{ex:rieq}--\ref{ex:ucbw}).

\begin{lemma} \label{l:uc}
Suppose $f\in M^T$, $\{f_j\}\subset M^T$ and $f_j\rra f$ on $T$. We have\/{\rm:}
  \begin{itemize}
\renewcommand{\itemsep}{0.0pt plus 0.5pt minus 0.25pt}
\item[{\rm(a)}]
$\D\!\! V_{\vep+0}(f,T)\!\le\!\liminf_{j\to\infty}V_\vep(f_j,T)\!\le\!%
  \limsup_{j\to\infty}V_\vep(f_j,T)\!\le\! V_{\vep-0}(f,T)$ for all \mbox{$\vep\!>\!0;$}
\item[{\rm(b)}] $\!\!$if $V_\vep(f_j,T)\!<\!\infty$ for all $\vep\!>\!0$ and $j\!\in\!\Nb$,
  then $V_\vep(f,T)\!<\!\infty$ for all $\vep\!>\!0$.
  \end{itemize}
\end{lemma}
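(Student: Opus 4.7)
The whole argument rests on a single simple observation: if $\delta_j := d_{\infty,T}(f_j,f)$ (so $\delta_j\to0$ by uniform convergence), then by the triangle inequality for $d_{\infty,T}$ one has the two sandwich inclusions
\begin{equation*}
G_{\vep-\delta_j,T}(f)\subset G_{\vep,T}(f_j)\subset G_{\vep+\delta_j,T}(f),
\end{equation*}
where the first holds whenever $\vep>\delta_j$. Indeed, if $g$ lies in $G_{\vep,T}(f_j)$ then $d_{\infty,T}(f,g)\le d_{\infty,T}(f,f_j)+d_{\infty,T}(f_j,g)\le\delta_j+\vep$, giving the right inclusion; the left one is analogous. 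Taking infima of $V(g,T)$ over these nested families yields the key two-sided estimate
\begin{equation*}
V_{\vep+\delta_j}(f,T)\le V_\vep(f_j,T)\le V_{\vep-\delta_j}(f,T)
\qquad(\vep>\delta_j).
\end{equation*}

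\textbf{Proof of (a).} The middle inequality $\liminf\le\limsup$ is automatic. For the left inequality, pick any $\vep''>\vep$. Eventually $\vep+\delta_j<\vep''$, so by monotonicity of $\eta\mapsto V_\eta(f,T)$ (Lemma~\ref{l:ele}(a)) and the lower bound above,
$V_{\vep''}(f,T)\le V_{\vep+\delta_j}(f,T)\le V_\vep(f_j,T)$ for large $j$. Taking $\liminf_{j\to\infty}$ and then letting $\vep''\to\vep+0$ gives $V_{\vep+0}(f,T)\le\liminf_j V_\vep(f_j,T)$. For the right inequality, pick any $\vep''<\vep$. Eventually $\vep-\delta_j>\vep''$, so $V_\vep(f_j,T)\le V_{\vep-\delta_j}(f,T)\le V_{\vep''}(f,T)$. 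Taking $\limsup_{j\to\infty}$ and then letting $\vep''\to\vep-0$ yields $\limsup_j V_\vep(f_j,T)\le V_{\vep-0}(f,T)$.

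\textbf{Proof of (b).} Fix $\vep>0$ and choose $j_0\in\Nb$ large enough that $\delta_{j_0}<\vep/2$. The left half of the two-sided estimate (with $\vep$ replaced by $\vep/2$ and $j=j_0$) gives $V_{\vep/2+\delta_{j_0}}(f,T)\le V_{\vep/2}(f_{j_0},T)$, which is finite by hypothesis. Since $\vep/2+\delta_{j_0}<\vep$, monotonicity of $\eta\mapsto V_\eta(f,T)$ delivers $V_\vep(f,T)\le V_{\vep/2+\delta_{j_0}}(f,T)<\infty$.

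\textbf{Anticipated difficulty.} There is no real obstacle: the main subtlety is just keeping track of the direction of the inequality and of the one-sided limits (why $V_{\vep+0}$ and $V_{\vep-0}$ appear on the two sides rather than $V_\vep$ itself). These one-sided limits are \emph{necessary} because, under mere uniform convergence, one cannot in general squeeze $V_\vep(f_j,T)$ between values of $V$ at the single point $\vep$; only an $\eta$-perturbation argument works, and that is exactly what forces passage to $\vep\to\vep\pm0$.
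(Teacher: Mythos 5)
Your proof is correct, and it is organized around a different pivot than the paper's. You isolate the quantitative sandwich $V_{\vep+\delta_j}(f,T)\le V_\vep(f_j,T)\le V_{\vep-\delta_j}(f,T)$ (for $\vep>\delta_j=d_{\infty,T}(f_j,f)$) via the inclusions $G_{\vep-\delta_j,T}(f)\subset G_{\vep,T}(f_j)\subset G_{\vep+\delta_j,T}(f)$, and then both (a) and (b) fall out by the monotonicity of $\eta\mapsto V_\eta(\cdot,T)$ and a passage to one-sided limits. The paper instead runs three separate $\eta$-approximation arguments: for the first inequality in (a) it passes to a subsequence realizing the $\liminf$, chooses near-optimal $g_j\in\BV(T;M)$ for $V_\vep(f_j,T)$, and estimates $d_{\infty,T}(f,g_j)\le\vep+\eta$; for the last inequality it chooses near-optimal $g_j$ for $V_{\vep'}(f,T)$ with $\vep'<\vep$ and waits until $d_{\infty,T}(f_j,f)\le\vep-\vep'$; and (b) is a third variation on the same theme. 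The underlying mechanism (the triangle inequality for $d_{\infty,T}$ shifting the $\vep$-parameter) is identical, but your inclusion-of-admissible-sets formulation handles the infinite values and the choice of minimizing sequences automatically, avoids any subsequence extraction, and makes the two halves of (a) and all of (b) corollaries of a single displayed estimate. Your treatment of the edge cases is also sound: the inclusions hold at the level of the sets $G_{\cdot,T}$, so the inequalities remain valid in $[0,\infty]$ even when some of the sets are empty, and you correctly restrict the left inclusion to indices with $\vep>\delta_j$, which suffices since $\delta_j\to0$.
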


\proof
(a) Only the first and the last inequalities are to be verified.

1. In order to prove the first inequality, we may assume (passing to a suitable
subsequence of $\{f_j\}$ if necessary) that the right-hand side (i.e., the $\liminf$)
is equal to $C\equiv\lim_{j\to\infty}V_\vep(f_j,T)<\infty$. Suppose $\eta>0$ is given
arbitrarily. Then, there is $j_0=j_0(\eta)\in\Nb$ such that $V_\vep(f_j,T)\le C+\eta$
for all $j\ge j_0$. By the definition of $V_\vep(f_j,T)$, for every $j\ge j_0$ there is
$g_j=g_{j,\eta}\in\BV(T;M)$ such that $d_{\infty,T}(f_j,g_j)\le\vep$ and
$V(g_j,T)\le V_\vep(f_j,T)+\eta$. Since $f_j\rra f$ on $T$, we have
$d_{\infty,T}(f_j,f)\to0$ as $j\to\infty$, and so, there is $j_1=j_1(\eta)\in\Nb$ such that
$d_{\infty,T}(f_j,f)\le\eta$ for all $j\ge j_1$. Noting that
  \begin{equation*}
d_{\infty}(f,g_j)\le d_{\infty}(f,f_j)+d_{\infty}(f_j,g_j)\le\eta+\vep\quad
\mbox{for all $j\ge\max\{j_0,j_1\}$,}
  \end{equation*}
we find, by virtue of definition \eq{e:av},
  \begin{equation*}
V_{\eta+\vep}(f,T)\le V(g_j,T)\le V_\vep(f_j,T)+\eta\le(C+\eta)+\eta=C+2\eta.
  \end{equation*}
Passing to the limit as $\eta\to+0$, we arrive at $V_{\vep+0}(f,T)\le C$, which
was to be proved.

2. To establish the last inequality, with no loss of generality we may assume that
$V_{\vep-0}(f,T)<\infty$. Given $\eta>0$, there is $\delta=\delta(\eta,\vep)\in(0,\vep)$
such that if $\vep'\in[\vep-\delta,\vep)$, we have
$V_{\vep'}(f,T)\le V_{\vep-0}(f,T)+\eta$. Since $f_j\rra f$ on $T$, given
$\vep-\delta\le\vep'<\vep$, there is $j_0=j_0(\vep',\vep)\in\Nb$ such that
$d_{\infty,T}(f_j,f)\le\vep-\vep'$ for all $j\ge j_0$. By the definition of $V_{\vep'}(f,T)$,
for every $j\in\Nb$ we find $g_j=g_{j,\vep'}\in\BV(T;M)$ such that
$d_{\infty,T}(f,g_j)\le\vep'$ and
  \begin{equation*}
V_{\vep'}(f,T)\le V(g_j,T)\le V_{\vep'}(f,T)+(1/j),
  \end{equation*}
so that $\lim_{j\to\infty}V(g_j,T)=V_{\vep'}(f,T)$. Noting that, for all $j\ge j_0$,
  \begin{equation*}
d_{\infty,T}(f_j,g_j)\le d_{\infty,T}(f_j,f)+d_{\infty,T}(f,g_j)\le(\vep-\vep')+\vep'=\vep,
  \end{equation*}
we find from \eq{e:av} that $V_\vep(f_j,T)\le V(g_j,T)$ for all $j\ge j_0$. It follows that
  \begin{equation*}
\limsup_{j\to\infty}V_\vep(f_j,T)\le\lim_{j\to\infty}V(g_j,T)=V_{\vep'}(f,T)
\le V_{\vep-0}(f,T)+\eta.
  \end{equation*}
It remains to take into account the arbitrariness of $\eta>0$.

(b) Let $\vep>0$ and $0<\vep'<\vep$. Given $j\in\Nb$, since $V_{\vep'}(f_j,T)<\infty$,
by definition \eq{e:av}, there is $g_j\in\BV(T;M)$ such that $d_{\infty,T}(f_j,g_j)\le\vep'$
and $V(g_j,T)\le V_{\vep'}(f_j,T)+1$. Since $f_j\rra f$ on $T$, there is
$j_0=j_0(\vep-\vep')\in\Nb$ such that $d_{\infty,T}(f_{j_0},f)\le\vep-\vep'$.
Noting that
  \begin{equation*}
d_{\infty,T}(f,g_{j_0})\le d_{\infty,T}(f,f_{j_0})+d_{\infty,T}(f_{j_0},g_{j_0})
\le(\vep-\vep')+\vep'=\vep,
  \end{equation*}
we get, by \eq{e:av}, $V_\vep(f,T)\le V(g_{j_0},T)\le V_{\vep'}(f_{j_0},T)+1<\infty$.
\sq

\begin{lemma}[change of variable in the approximate variation] \label{l:chvar}
If $T\subset\Rb$, $\vfi:T\to\Rb$ is a {\sl strictly monotone} function and
 $f:\vfi(T)\to M$, then%
\footnote{Here, as usual, $\vfi(T)=\{\vfi(t):t\in T\}$ is the image of $T$ under $\vfi$,
and $f\circ\vfi$ is the composed function of $\vfi:T\to\Rb$ and $f:\vfi(T)\to M$ given
by $(f\circ\vfi)(t)=f(\vfi(t))$, $t\in T$.}
  \begin{equation*}
V_\vep(f,\vfi(T))=V_\vep(f\circ\vfi,T)\quad\mbox{for \,all \,\,$\vep>0$.}
  \end{equation*}
\end{lemma}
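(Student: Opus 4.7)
The plan is to exhibit a bijection between the admissible function classes in the definitions of $V_\vep(f,\vfi(T))$ and $V_\vep(f\circ\vfi,T)$ that preserves both the Jordan variation and the uniform distance, so that the two infima in \eq{e:av} coincide term by term.

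First I would set up the bijection. Since $\vfi:T\to\Rb$ is strictly monotone, it is injective, hence $\vfi:T\to\vfi(T)$ is a bijection with inverse $\psi=\vfi^{-1}:\vfi(T)\to T$ which is also strictly monotone in the same sense. The map $\Phi:M^{\vfi(T)}\to M^T$ defined by $\Phi(g)=g\circ\vfi$ is then a bijection with inverse $\Phi^{-1}(h)=h\circ\psi$. The uniform metric is preserved: for any $g\in M^{\vfi(T)}$,
\begin{equation*}
d_{\infty,T}(f\circ\vfi,g\circ\vfi)=\sup_{t\in T}d(f(\vfi(t)),g(\vfi(t)))=\sup_{s\in\vfi(T)}d(f(s),g(s))=d_{\infty,\vfi(T)}(f,g),
\end{equation*}
since $\vfi(T)=\{\vfi(t):t\in T\}$ by definition.

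Next I would verify that $V(g,\vfi(T))=V(g\circ\vfi,T)$. In the strictly increasing case this is immediate: a partition $t_0\le t_1\le\dots\le t_m$ of $T$ corresponds, via $s_i=\vfi(t_i)$, to a partition $s_0\le s_1\le\dots\le s_m$ of $\vfi(T)$, and
\begin{equation*}
\sum_{i=1}^m d((g\circ\vfi)(t_i),(g\circ\vfi)(t_{i-1}))=\sum_{i=1}^m d(g(s_i),g(s_{i-1})),
\end{equation*}
so the suprema over partitions agree. In the strictly decreasing case, $t_0\le\dots\le t_m$ maps to $s_0\ge\dots\ge s_m$, i.e.\ the reversed sequence $s_m\le s_{m-1}\le\dots\le s_0$ is a partition of $\vfi(T)$; symmetry of $d$ makes the partition sum invariant under reversal, so again the two suprema are equal. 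In particular, $g\in\BV(\vfi(T);M)$ iff $g\circ\vfi\in\BV(T;M)$.

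Combining these two facts, $\Phi$ restricts to a bijection between
\begin{equation*}
G_{\vep,\vfi(T)}(f)=\{g\in\BV(\vfi(T);M):d_{\infty,\vfi(T)}(f,g)\le\vep\}
\end{equation*}
and $G_{\vep,T}(f\circ\vfi)=\{h\in\BV(T;M):d_{\infty,T}(f\circ\vfi,h)\le\vep\}$, and it preserves the value $V(\cdot)$ computed on each side. Therefore, by the definition \eq{e:av},
\begin{equation*}
V_\vep(f,\vfi(T))=\inf_{g\in G_{\vep,\vfi(T)}(f)}V(g,\vfi(T))=\inf_{h\in G_{\vep,T}(f\circ\vfi)}V(h,T)=V_\vep(f\circ\vfi,T),
\end{equation*}
with the convention $\inf\es=\infty$ handling the case when one (hence both) of the admissible sets is empty. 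The only mild subtlety is the strictly decreasing case, which is dispatched by the symmetry of $d$ noted above; this is the one spot where care is needed, but it is not really an obstacle.
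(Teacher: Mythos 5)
Your proof is correct and follows essentially the same route as the paper's: both rest on the correspondence $g\mapsto g\circ\vfi$, which preserves the uniform distance and the Jordan variation, applied in both directions via $\vfi$ and $\vfi^{-1}$. The only difference is presentational: the paper cites the change-of-variable formula $V(g,\vfi(T))=V(g\circ\vfi,T)$ from the literature and then establishes each inequality separately using $\eta$-approximate minimizers, whereas you prove that formula directly (correctly disposing of the strictly decreasing case by the symmetry of $d$ under reversal of a partition) and package the two inequalities as a single variation-preserving bijection between the admissible classes $G_{\vep,\vfi(T)}(f)$ and $G_{\vep,T}(f\circ\vfi)$.
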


\proof
We need the following `change of variable' formula for Jordan's variation (cf.\ 
\cite[Theorem~2.20]{Var}, \cite[Proposition~2.1(V4)]{MatSb}):
if $T\subset\Rb$, $\vfi:T\to\Rb$
is a (not necessarily strictly) \emph{monotone} function and $g:\vfi(T)\to M$, then
  \begin{equation} \label{e:chava}
V(g,\vfi(T))=V(g\circ\vfi,T).
  \end{equation}

($\ge$) Suppose $V_\vep(f,\vfi(T))\!<\!\infty$. By definition \eq{e:av}, for every
$\eta\!>\!V_\vep(f,\vfi(T))$ there is $g\in\BV(\vfi(T);M)$ such that
$d_{\infty,\vfi(T)}(f,g)\le\vep$ and $V(g,\vfi(T))\le\eta$. We have $g\circ\vfi\in M^T$,
  \begin{equation} \label{e:444}
d_{\infty,T}(f\circ\vfi,g\circ\vfi)=d_{\infty,\vfi(T)}(f,g)\le\vep,
  \end{equation}
and, by \eq{e:chava}, $V(g\circ\vfi,T)=V(g,\vfi(T))\le\eta$. Thus,
by \eq{e:av} and \eq{e:444},
  \begin{equation*}
\mbox{$V_\vep(f\circ\vfi,T)\le V(g\circ\vfi,T)\le\eta$ \,\,\,for all 
\,\,$\eta>V_\vep(f,\vfi(T))$,}
  \end{equation*}
 and so,
$V_\vep(f\circ\vfi,T)\le V_\vep(f,\vfi(T))<\infty$.

($\le$) Now, suppose $V_\vep(f\circ\vfi,T)<\infty$. Then, for every
$\eta>V_\vep(f\circ\vfi,T)$ there exists $g\in\BV(T;M)$ such that
$d_{\infty,T}(f\circ\vfi,g)\le\vep$ and $V(g,T)\le\eta$. Denote by
$\vfi^{-1}:\vfi(T)\to T$ the inverse function of $\vfi$. Clearly, $\vfi^{-1}$ is
strictly monotone on $\vfi(T)$ in the same sense as $\vfi$ on $T$. Setting
$g_1=g\circ\vfi^{-1}$, we find $g_1:\vfi(T)\to M$ and, by \eq{e:444},
  \begin{align*}
d_{\infty,\vfi(T)}(f,g_1)&=d_{\infty,\vfi(T)}\bigl((f\circ\vfi)\circ\vfi^{-1},%
  g\circ\vfi^{-1}\bigr)\\[3pt]
&=d_{\infty,\vfi^{-1}(\vfi(T))}(f\circ\vfi,g)=d_{\infty,T}(f\circ\vfi,g)\le\vep.
  \end{align*}
Furthermore, by \eq{e:chava},
  \begin{equation*}
V(g_1,\vfi(T))=V(g\circ\vfi^{-1},\vfi(T))=V(g,\vfi^{-1}(\vfi(T)))=V(g,T)\le\eta.
  \end{equation*}
Thus, $V_\vep(f,\vfi(T))\le V(g_1,\vfi(T))\le\eta$ for all $\eta>V_\vep(f\circ\vfi,T)$,
which implies the inequality $V_\vep(f,\vfi(T))\le V_\vep(f\circ\vfi,T)<\infty$.
\sq

Lemma~\ref{l:chvar} will be applied in Example~\ref{ex:midp}
(cf.~Case $\al>1$ on p.~\pageref{p:a>1}).

Under additional assumptions on the metric space $(M,d)$, we get three more
properties of the approximate variation. Recall that $(M,d)$ is called \emph{proper\/}
(or has the \emph{Heine-Borel property\/}) if all \emph{closed bounded\/} subsets
of $M$ are \emph{compact}. \label{p:properms} For instance, if $(M,\|\cdot\|)$ is a
\emph{finite-dimensional\/} normed linear space with induced metric $d$
(cf.~p.~\pageref{p:nls}), then $(M,d)$ is a proper metric space.
Note that a proper metric space is complete. In fact, if $\{x_j\}_{j=1}^\infty$ is a Cauchy
sequence in $M$, then it is bounded and, since $M$ is proper, the set $\{x_j:j\in\Nb\}$
is \rc\ in~$M$. Hence a subsequence of $\{x_j\}_{j=1}^\infty$ converges in $M$ to an
element $x\in M$. Now, since $\{x_j\}_{j=1}^\infty$ is Cauchy, we get $x_j\to x$ as
$j\to\infty$, which proves the completeness of~$M$.

\begin{lemma} \label{l:proper}
Let $(M,d)$ be a {\sl proper} metric space and $f\in M^T$. We have\/{\rm:}
  \begin{itemize}
\renewcommand{\itemsep}{0.0pt plus 0.5pt minus 0.25pt}
\item[{\rm(a)}] the function $\vep\mapsto V_\vep(f,T)$ is continuous from the right
  on $(0,\infty);$
\item[{\rm(b)}] given $\vep>0$, $V_\vep(f,T)<\infty$ if and only if
  $V_\vep(f,T)=V(g,T)$ for some function $g=g_\vep\in G_{\vep,T}(f)$ {\rm(}i.e.,
  the infimum in\/ \eq{e:av} is attained, and so, becomes the minimum{\rm);}
\item[{\rm(c)}] if\/ $\{f_j\}\subset M^T$ and $f_j\to f$ on $T$, then
  $V_\vep(f,T)\le\liminf_{j\to\infty}V_\vep(f_j,T)$ for all $\vep>0$.
  \end{itemize}
\end{lemma}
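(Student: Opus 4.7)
The three statements share a common strategy: produce a minimizing or approximating sequence $\{g_k\}\subset\BV(T;M)$, use properness to show it is \pw\ \rc\ on $T$, apply Helly's selection principle (V.4) to extract a subsequence $g_{k_m}\to g$ \pw\ on $T$ with $g\in\BV(T;M)$, and conclude via lower semicontinuity (V.3) that $V(g,T)\le\liminf_m V(g_{k_m},T)$. In every case the pointwise relative compactness will follow because each $g_k(t)$ is forced to lie in a fixed closed ball of $M$, which is compact since $(M,d)$ is proper.

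I would address (b) first, since it is the engine behind (a) and (c). Assuming $V_\vep(f,T)<\infty$, invoke \eq{e:fin} to produce $\{g_k\}\subset\BV(T;M)$ with $d_{\infty,T}(f,g_k)\le\vep$ and $V(g_k,T)\to V_\vep(f,T)$. For each $t\in T$ the sequence $\{g_k(t)\}$ sits in the closed $\vep$-ball about $f(t)$, hence is \rc\ in $M$, and $\sup_k V(g_k,T)<\infty$. Property (V.4) then supplies a subsequence $g_{k_m}\to g$ \pw\ on $T$ with $g\in\BV(T;M)$; passing to the limit in $d(f(t),g_{k_m}(t))\le\vep$ gives $d_{\infty,T}(f,g)\le\vep$, so $g\in G_{\vep,T}(f)$, and (V.3) yields $V(g,T)\le V_\vep(f,T)$, while the reverse inequality is immediate from \eq{e:av}.

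For (a), only the reverse of the inequality $V_{\vep+0}(f,T)\le V_\vep(f,T)$ from Lemma~\ref{l:ele}(a) needs proof; we may assume $V_{\vep+0}(f,T)<\infty$ and choose $\vep_n\downarrow\vep$ with $V_{\vep_n}(f,T)\to V_{\vep+0}(f,T)$. By (b), pick minimizers $g_n\in G_{\vep_n,T}(f)$ with $V(g_n,T)=V_{\vep_n}(f,T)$. Since each $g_n(t)$ lies in the closed $\vep_1$-ball about $f(t)$ and $\sup_n V(g_n,T)<\infty$, (V.4) produces a \pw\ limit $g$ on a subsequence $g_{n_m}$, and $d(f(t),g_{n_m}(t))\le\vep_{n_m}\to\vep$ forces $g\in G_{\vep,T}(f)$; then (V.3) gives $V_\vep(f,T)\le V(g,T)\le V_{\vep+0}(f,T)$. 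For (c), pass to a subsequence so that $V_\vep(f_j,T)\to L=\liminf_j V_\vep(f_j,T)$; the case $L=\infty$ is trivial, so assume $L<\infty$. By (b), produce $g_j\in G_{\vep,T}(f_j)$ with $V(g_j,T)=V_\vep(f_j,T)$. For each $t\in T$, $d(g_j(t),f(t))\le\vep+d(f_j(t),f(t))$, and since $f_j(t)\to f(t)$ the right side is bounded in $j$, so $\{g_j(t)\}$ is \rc\ in the proper space $M$; also $\sup_j V(g_j,T)<\infty$. Apply (V.4) to extract $g_{j_k}\to g$ \pw\ on $T$; the triangle inequality together with $f_j\to f$ \pw\ yields $d(f(t),g(t))\le\vep$, so $g\in G_{\vep,T}(f)$, and (V.3) gives $V_\vep(f,T)\le V(g,T)\le L$.

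The main obstacle throughout is verifying \pw\ relative compactness of the constructed sequences so that (V.4) is applicable, and this is precisely where the assumption that $(M,d)$ is proper enters---through compactness of closed balls in $M$. Without it, (b) can genuinely fail (the infimum need not be attained) and the extraction steps in (a) and (c) collapse; once properness is in hand, the rest reduces to unwinding \eq{e:av} and routinely combining (V.3) with (V.4).
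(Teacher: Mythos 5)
Your proposal is correct and uses essentially the same machinery as the paper: minimizing (or near-minimizing) sequences in $\BV(T;M)$, properness to get pointwise relative compactness, extraction via the Helly-type principle (V.4), and lower semicontinuity (V.3) to bound the variation of the limit. The only organizational difference is that you prove (b) first and feed its exact minimizers into (a) and (c), whereas the paper runs (a) and (c) directly with approximate minimizers $V(g,T)\le\eta$; this changes nothing of substance.
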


\proof
(a) By virtue of \eq{e:osli}, it suffices to show that $V_\vep(f,T)\le V_{\vep+0}(f,T)$
provided $V_{\vep+0}(f,T)$ is finite. In fact, given
$\eta\!>\!V_{\vep+0}(f,T)\!=\!\lim_{\vep'\to\vep+0}V_{\vep'}(f,T)$,
there is \mbox{$\delta\!=\!\delta(\eta)\!>\!0$} such that $\eta>V_{\vep'}(f,T)$
 for all $\vep'$ with
$\vep<\vep'\le\vep+\delta$. Let $\{\vep_k\}_{k=1}^\infty$ be a sequence such that
$\vep<\vep_k\le\vep+\delta$ for all $k\in\Nb$ and $\vep_k\to\vep$ as $k\to\infty$.
Given $k\in\Nb$, setting $\vep'=\vep_k$, we find $\eta>V_{\vep_k}(f,T)$, and so,
by definition \eq{e:av}, there is $g_k\in\BV(T;M)$ (also depending on $\eta$) such that
  \begin{equation} \label{e:tfo}
d_{\infty,T}(f,g_k)\le\vep_k\quad\mbox{and}\quad V(g_k,T)\le\eta.
  \end{equation}
By the first inequality in \eq{e:tfo}, the sequence $\{g_k\}$ is pointwise bounded on $T$,
because, given $t\in T$, by the triangle inequality for $d$, we have
  \begin{align}
d(g_k(t),g_j(t))&\le d(g_k(t),f(t))+d(f(t),g_j(t)) \nonumber\\[3pt]
&\le d_{\infty,T}(g_k,f)+d_{\infty,T}(f,g_j) \label{e:tipb} \\[3pt]
&\le\vep_k+\vep_j\le2(\vep+\delta)\quad\mbox{for all}\quad k,j\in\Nb,\nonumber
  \end{align}
and since $(M,d)$ is \emph{proper}, the sequence $\{g_k\}$ is \pw\ \rc\ on~$T$.
So, the second inequality in \eq{e:tfo} and the Helly-type selection principle in
$\BV(T;M)$ (which is property (V.4) on~p.~\pageref{p:V}) imply the existence of a
subsequence of $\{g_k\}$, again denoted by $\{g_k\}$ (and the corresponding
subsequence of $\{\vep_k\}$---again by~$\{\vep_k\}$), and a function $g\in\BV(T;M)$
such that $g_k\to g$ pointwise on~$T$. Noting that, by \eq{e:tfo},
  \begin{equation} \label{e:kove}
d_{\infty,T}(f,g)\le\liminf_{k\to\infty}d_{\infty,T}(f,g_k)\le\lim_{k\to\infty}\vep_k=\vep
  \end{equation}
and, by the lower semicontinuity of $V$ (property (V.3) on p.~\pageref{p:V}),
  \begin{equation} \label{e:mke}
V(g,T)\le\liminf_{k\to\infty}V(g_k,T)\le\eta,
  \end{equation}
we find, from definition \eq{e:av}, that $V_\vep(f,T)\le V(g,T)\le\eta$. It remains
to take into account the arbitrariness of $\eta>V_{\vep+0}(f,T)$. 

Items (b) and (c) were essentially established in \cite{Fr} for $T=[a,b]$ and $M=\Rb^N$
as Propositions 3.5 and 3.6, respectively. For the sake of completeness, we present the
proofs of (b) and (c) in our more general situation.

(b) The sufficiency ($\!\Leftarrow$) is clear. Now we establish the necessity~%
($\Rightarrow\!$). By definition \eq{e:av}, given $k\in\Nb$, there is $g_k\in\BV(T;M)$
such that
  \begin{equation} \label{e:25on}
d_{\infty,T}(f,g_k)\le\vep\quad\mbox{and}\quad
V_\vep(f,T)\le V(g_k,T)\le V_\vep(f,T)+(1/k).
  \end{equation}
From \eq{e:tipb} and \eq{e:25on}, we find $d(g_k(t),g_j(t))\le2\vep$ for all
$k,j\in\Nb$ and~$t\in T$, and so, the sequence $\{g_k\}$ is \pw\ bounded on $T$,
and since $(M,d)$ is \emph{proper}, $\{g_k\}$ is \pw\ \rc\ on~$T$. Moreover,
by \eq{e:25on}, $\sup_{k\in\Nb}V(g_k,T)\le V_\vep(f,T)+1<\infty$.
By the Helly-type selection principle (V.4) in $\BV(T;M)$, there are a subsequence of
$\{g_k\}$, again denoted by $\{g_k\}$, and a function $g\in\BV(T;M)$ such that
$g_k\to g$ on $T$. As in \eq{e:kove}, we get $d_{\infty,T}(f,g)\le\vep$, and so,
\eq{e:av}, \eq{e:mke} and \eq{e:25on} yield
  \begin{equation*}
V_\vep(f,T)\le V(g,T)\le\lim_{k\to\infty}V(g_k,T)=V_\vep(f,T).
  \end{equation*}

(c) Passing to a subsequence of $\{f_j\}$ (if necessary), we may assume that the
right-hand side of the inequality in (c) is equal to $C_\vep=\lim_{j\to\infty}V_\vep(f_j,T)$
and finite. Given $\eta>C_\vep$, there is $j_0=j_0(\eta,\vep)\in\Nb$ such that
$\eta>V_\vep(f_j,T)$ for all $j\ge j_0$. For every $j\ge j_0$, by the definition of
$V_\vep(f_j,T)$, there is $g_j\in\BV(T;M)$ such that
  \begin{equation} \label{e:t25}
d_{\infty,T}(f_j,g_j)\le\vep\quad\mbox{and}\quad V(g_j,T)\le\eta.
  \end{equation}
Since $f_j\to f$ \pw\ on $T$, $\{f_j\}$ is \pw\ \rc\ on $T$, and so, $\{f_j\}$ is \pw\
bounded on $T$, i.e., $B(t)=\sup_{j,k\in\Nb}d(f_j(t),f_k(t))<\infty$ for all $t\in T$.
By the triangle inequality for $d$ and \eq{e:t25}, given $j,k\ge j_0$ and $t\in T$, we have
  \begin{align*}
d(g_j(t),g_k(t))&\le d(g_j(t),f_j(t))+d(f_j(t),f_k(t))+d(f_k(t),g_k(t))\\[3pt]
&\le d_{\infty,T}(g_j,f_j)+B(t)+d_{\infty,T}(f_k,g_k)\le B(t)+2\vep.
  \end{align*}
This implies that the sequence $\{g_j\}_{j=j_0}^\infty$ is \pw\ bounded on $T$, and
since $(M,d)$ is \emph{proper}, it is \pw\ \rc\ on~$T$. It follows from \eq{e:t25} that
$\sup_{j\ge j_0}V(g_j,T)$ does not exceed $\eta<\infty$,
 and so, by the Helly-type selection principle (V.4)
in $\BV(T;M)$, there are a subsequence $\{g_{j_p}\}_{p=1}^\infty$ of
$\{g_j\}_{j=j_0}^\infty$ and a function $g\in\BV(T;M)$ such that $g_{j_p}\to g$ \pw\
on~$T$ as $p\to\infty$. Since $f_{j_p}\to f$ \pw\ on $T$ as $p\to\infty$, we find,
from \eq{e:t25} and property (V.3) on p.~\pageref{p:V}, that
  \begin{equation*}
d_{\infty,T}(f,g)\le\liminf_{p\to\infty}d_{\infty,T}(f_{j_p},g_{j_p})\le\vep
  \end{equation*}
and
  \begin{equation*}
V(g,T)\le\liminf_{p\to\infty}V(g_{j_p},T)\le\eta.
  \end{equation*}
Now, definition \eq{e:av} implies $V_\vep(f,T)\le V(g,T)\le\eta$ for all $\eta>C_\vep$,
and so, $V_\vep(f,T)\le C_\vep=\lim_{j\to\infty}V_\vep(f_j,T)$, which was to be proved.
\sq

\begin{remark}
The inequality in Lemma~\ref{l:proper}(c) agrees with the left-hand side inequality in
Lemma~\ref{l:uc}(a): in fact, if $(M,d)$ is \emph{proper}, $\{f_j\}\subset M^T$ and
$f_j\rra f$ on $T$, then, by Lemma~\ref{l:proper}(a),
  \begin{equation*}
V_\vep(f,T)=V_{\vep+0}(f,T)\le\liminf_{j\to\infty}V_\vep(f_j,T),\quad\vep>0.
  \end{equation*}
The properness of $(M,d)$ in Lemma~\ref{l:proper} is essential:
item (a) is illustrated in Example~\ref{ex:gDf}(e) on p.~\pageref{p:rico},
(b)---in Example~\ref{ex:ims1}, and (c)---in Example~\ref{ex:ims2}.
\end{remark}

\chapter{Examples of approximate variations} \label{s:exav}

This section is devoted to various examples of approximate variations. In particular,
it is shown that all assertions in Section~\ref{ss:pro} are sharp.

\section{Functions with values in a normed linear space}

\begin{example} \label{ex:1} \rm
Let $T\subset\Rb$ and $(M,\|\cdot\|)$ be a normed linear space (cf.~p.~\pageref{p:nls}).
We have $d_{\infty,T}(f,g)=\|f-g\|_{\infty,T}$, $f,g\in M^T$, where the
\emph{uniform norm\/} on $M^T$ is given by
  \begin{equation*}
\|f\|_{\infty,T}=\sup_{t\in T}\|f(t)\|,\quad\,\,f\in M^T.
  \end{equation*}

We are going to estimate (and/or evaluate) the approximate variation
$\{V_\vep(f,T)\}_{\vep>0}$ for the function $f:T\to M$ defined, for $x,y\in M$, $x\ne0$,
by
  \begin{equation} \label{e:fxy}
\mbox{$f(t)=\vfi(t)x+y$, \,$t\in T$, \,\,where \,$\vfi\in\BV(T;\Rb)$ is
\emph{nonconstant}.}
  \end{equation}

To begin with, recall that $0<|\vfi(T)|\le V(\vfi,T)<\infty$ and
  \begin{equation*}
|\vfi(T)|=\sup_{s,t\in T}|\vfi(s)-\vfi(t)|=\mbox{\rm s}(\vfi)-\mbox{\rm i}(\vfi),
  \end{equation*}
where $\mbox{\rm s}(\vfi)\!\equiv\!\mbox{\rm s}(\vfi,T)\!=\!\sup_{t\in T}\vfi(t)$ and
$\mbox{i}(\vfi)\!\equiv\!\mbox{\rm i}(\vfi,T)\!=\!\inf_{t\in T}\vfi(t)$. Moreover,
  \begin{equation} \label{e:sif}
\biggl|\vfi(t)-\frac{\mbox{\rm i}(\vfi)+\mbox{\rm s}(\vfi)}2\biggr|\le
\frac{\mbox{\rm s}(\vfi)-\mbox{\rm i}(\vfi)}2=\frac{|\vfi(T)|}2\quad\,
\mbox{for all \,\,$t\in T$.}
  \end{equation}
In fact, given $t\in T$, we have $\mbox{\rm i}(\vfi)\le\vfi(t)\le\mbox{\rm s}(\vfi)$,
and so, subtracting $(\mbox{\rm i}(\vfi)+\mbox{\rm s}(\vfi))/2$ from
both sides, we get
  \begin{align*}
-\frac{|\vfi(T)|}2=\frac{\mbox{\rm i}(\vfi)-\mbox{\rm s}(\vfi)}2&=
  \mbox{\rm i}(\vfi)-\frac{\mbox{\rm i}(\vfi)+\mbox{\rm s}(\vfi)}2\le\\
&\le\vfi(t)-\frac{\mbox{\rm i}(\vfi)+\mbox{\rm s}(\vfi)}2\le\\
&\le\mbox{\rm s}(\vfi)-\frac{\mbox{\rm i}(\vfi)+\mbox{\rm s}(\vfi)}2=
  \frac{\mbox{\rm s}(\vfi)-\mbox{\rm i}(\vfi)}2=\frac{|\vfi(T)|}2.
  \end{align*}

Since $|f(T)|=|\vfi(T)|\!\cdot\!\|x\|$, by \eq{e:zero}, $V_\vep(f,T)=0$ for all
$\vep\ge|\vfi(T)|\!\cdot\!\|x\|$. Furthermore, if 
$c\equiv c(t)=(\mbox{\rm i}(\vfi)+\mbox{\rm s}(\vfi))(x/2)+y$, $t\in T$, then $c$
is a constant function on $T$ and, by \eq{e:sif}, we have
  \begin{equation*}
\|f(t)-c\|=\biggl|\vfi(t)-\frac{\mbox{\rm i}(\vfi)+\mbox{\rm s}(\vfi)}2\biggr|\!\cdot\!\|x\|
\le\frac{|\vfi(T)|}2\!\cdot\!\|x\|\quad\,\,\mbox{for all \,\,$t\in T$,}
  \end{equation*}
i.e., $\|f-c\|_{\infty,T}\le|\vfi(T)|\!\cdot\!\|x\|/2$. By \eq{e:ze1}, we find
  \begin{equation} \label{e:ov2}
V_\vep(f,T)=0\quad\mbox{for \,all}\quad\vep\ge|\vfi(T)|\!\cdot\!\|x\|/2.
  \end{equation}
Now, assume that $0<\vep<|\vfi(T)|\!\cdot\!\|x\|/2$. Lemma~\ref{l:71}(f) implies
  \begin{equation} \label{e:tvd}
V_\vep(f,T)\ge|f(T)|-2\vep=|\vfi(T)|\!\cdot\!\|x\|-2\vep.
  \end{equation}
Define the function $g\in M^T$ by
  \begin{align}
g(t)&=\biggl[\biggl(1-\frac{2\vep}{V(\vfi,T)\|x\|}\biggr)\vfi(t)+
  \frac{(\mbox{\rm i}(\vfi)+\mbox{\rm s}(\vfi))\,\vep}{V(\vfi,T)\|x\|}\biggr]x+y=
  \label{e:gt1}\\[4pt]
&=\vfi(t)x-\frac{2\vep}{V(\vfi,T)\|x\|}\biggl(\vfi(t)-
  \frac{\mbox{\rm i}(\vfi)+\mbox{\rm s}(\vfi)}2\biggr)x+y,\quad t\in T.
  \label{e:gt2}
  \end{align}
Note that since $|\vfi(T)|\!\le\! V(\vfi,T)$, the assumption on $\vep$ gives
$\vep\!<\!V(\vfi,T)\|x\|/2$, and so, $0<2\vep/(V(\vfi,T)\|x\|)<1$. Given $t\in T$,
\eq{e:gt2} and \eq{e:sif} imply
  \begin{equation*}
\|f(t)\!-\!g(t)\|\!=\!\frac{2\vep}{V(\vfi,T)\|x\|}\!\cdot\!
  \biggl|\vfi(t)-\frac{\mbox{\rm i}(\vfi)+\mbox{\rm s}(\vfi)}2\biggr|\!\cdot\!\|x\|
  \le\frac{2\vep}{V(\vfi,T)}\!\cdot\!\frac{|\vfi(T)|}{2}\le\vep,
  \end{equation*}
and so, $\|f-g\|_{\infty,T}\le\vep$. By \eq{e:gt1}, we find
  \begin{equation*}
V(g,T)=\biggl(1-\frac{2\vep}{V(\vfi,T)\|x\|}\biggr)V(\vfi,T)\|x\|=V(\vfi,T)\|x\|-2\vep.
  \end{equation*}
Hence, by definition \eq{e:av}, $V_\vep(f,T)\le V(g,T)=V(\vfi,T)\|x\|-2\vep$.
From here and \eq{e:tvd} we conclude that
  \begin{equation} \label{e:trdo}
|\vfi(T)|\cdot\|x\|-2\vep\le V_\vep(f,T)\le V(\vfi,T)\|x\|-2\vep
\,\,\,\mbox{if}\,\,\,0\!<\!\vep\!<\!|\vfi(T)|\cdot\|x\|/2.
  \end{equation}

In particular, if $\vfi\in\Rb^T$ is (nonconstant and) \emph{monotone}, then
$V(\vfi,T)=|\vfi(T)|$, and so, \eq{e:trdo} yields
  \begin{equation} \label{e:mntn}
\mbox{if \,\,$0\!<\!\vep\!<\!|\vfi(T)|\!\cdot\!\|x\|/2$, \,then
\,\,$V_\vep(f,T)=|\vfi(T)|\cdot\|x\|-2\vep$.}
  \end{equation}
Clearly, Example \ref{exa:t} is a particular case of \eq{e:mntn} and \eq{e:ov2} with
$T=[0,1]$, $M=\Rb$, $x=1$, $y=0$, and $\vfi(t)=t$, $t\in T$.

However, if $\vfi$ from \eq{e:fxy} is nonmonotone, both inequalities \eq{e:trdo} may be
strict (cf.~Remark~\ref{r:siq}). Note also that assertion \eq{e:mntn} implies the classical
Helly selection principle for monotone functions (cf.\ Remark~\ref{r:cHp}).
\end{example}

\begin{example} \label{ex:2} \rm
Here we show that the inequalities in Lemma~\ref{l:mor} are sharp and may be strict.
In fact, letting $\vfi(t)=t$, $t\in T=[0,1]$, and $y=0$ in \eq{e:fxy}, and setting
$T_1=[0,\frac12]$ and $T_2=[\frac12,1]$, we get, by virtue of \eq{e:mntn}
and \eq{e:ov2},
  \begin{equation*}
V_\vep(f,T)=\left\{
  \begin{tabular}{ccr}
$\!\!\|x\|-2\vep$ & \mbox{if} & $0<\vep<\frac12\|x\|$,\\[3pt]
$\!\!0$ & \mbox{if} & $\vep\ge\frac12\|x\|$,
  \end{tabular}\right.
  \end{equation*}
and, for $i=1,2$,
  \begin{equation*}
V_\vep(f,T_i)=\left\{
  \begin{tabular}{ccr}
$\!\!\frac12\|x\|-2\vep$ & \mbox{if} & $0<\vep<\frac14\|x\|$,\\[3pt]
$\!\!0$ & \mbox{if} & $\vep\ge\frac14\|x\|$.
  \end{tabular}\right.
  \end{equation*}
It remains, in Lemma~\ref{l:mor}, to consider the cases:
(a) $0<\vep<\frac14\|x\|$, (b) $\frac14\|x\|\le\vep<\frac12\|x\|$,
and (c) $\vep\ge\frac12\|x\|$. Explicitly, we have, in place of
  \begin{align*}
\mbox{$V_\vep(f,T_1)+V_\vep(f,T_2)$}&\le\mbox{$V_\vep(f,T)$}\le
  \mbox{$V_\vep(f,T_1)+V_\vep(f,T_2)+2\vep:$}\\[8pt]
\mbox{(a) $(\frac12\|x\|\!-\!2\vep)\!+\!(\frac12\|x\|\!-\!2\vep)$}&
  <\mbox{$\|x\|\!-\!2\vep$}=
  \mbox{$(\frac12\|x\|\!-\!2\vep)\!+\!(\frac12\|x\|\!-\!2\vep)+2\vep;$}\\[2pt]
\mbox{(b)\quad $0+0$}&<\mbox{$\|x\|\!-\!2\vep$}\le\mbox{$0+0+2\vep;$}\\[2pt]
\mbox{(c)\quad $0+0$}&=\mbox{\quad\quad$\!0\,$\quad}<
  \mbox{$0+0+2\vep.$}
  \end{align*}
\end{example}

\begin{example} \label{ex:thr} \rm
Let $\tau\in I=[a,b]$, $(M,d)$ be a metric space, and $x,y\in M$, $x\ne y$.
Define $f\in M^I$ by
  \begin{equation} \label{e:ftau}
f(t)\equiv f_\tau(t)=\left\{
  \begin{tabular}{ccl}
$\!\!x$ & \mbox{if} & $t=\tau$,\\[2pt]
$\!\!y$ & \mbox{if} & $t\in I$, $t\ne\tau$.
  \end{tabular}\right.
  \end{equation}
Clearly, $|f(I)|=d(x,y)$, $V(f,I)=d(x,y)$ if $\tau\in\{a,b\}$, and \mbox{$V(f,I)=2d(x,y)$}
if \mbox{$a\!<\!\tau\!<\!b$}. By \eq{e:zero}, we get $V_\vep(f,I)=0$
for all $\vep\ge d(x,y)$.
Lemma~\ref{l:71}(f) provides the following inequalities for $0<\vep<d(x,y)$:

(a) if $\tau=a$ or $\tau=b$, then
  \begin{align*}
d(x,y)-2\vep&\le V_\vep(f,I)\le d(x,y)\quad\mbox{if}\quad
  0<\vep<\textstyle\frac12d(x,y),\\[3pt]
0&\le V_\vep(f,I)\le d(x,y)\quad\mbox{if}\quad\textstyle\frac12d(x,y)\le\vep<d(x,y);
  \end{align*}

(b) if $a<\tau<b$, then
  \begin{align*}
d(x,y)-2\vep&\le V_\vep(f,I)\le 2d(x,y)\quad\mbox{if}\quad
  0<\vep<\textstyle\frac12d(x,y),\\[3pt]
0&\le V_\vep(f,I)\le 2d(x,y)\quad\mbox{if}\quad\textstyle\frac12d(x,y)\le\vep<d(x,y).
  \end{align*}

Under additional assumptions on the metric space $(M,d)$, the values $V_\vep(f,I)$ for
$0<\vep<d(x,y)$ can be given more exactly. To see this, we consider two cases
(A) and (B) below.

(A) Let $M=\{x,y\}$ be the two-point set with metric $d$ and $0\!<\!\vep\!<\!d(x,y)$.
Since $f(t)=x$ or $f(t)=y$ for all $t\in I$, we have: if $g\in M^I$ and
$d_{\infty,I}(f,g)\le\vep$, then $g=f$ on $I$, i.e., $G_{\vep,I}(f)=\{f\}$. Thus,
$V_\vep(f,I)=V(f,I)$, and so,
  \begin{align*}
V_\vep(f,I)&=d(x,y)\quad\,\,\,\mbox{if}\quad \tau\in\{a,b\},\\[3pt]
V_\vep(f,I)&=2d(x,y)\quad \mbox{if}\quad a<\tau<b.
  \end{align*}

(B) Let $(M,\|\cdot\|)$ be a normed linear space with induced metric $d$ and
$0<\vep<d(x,y)=\|x-y\|$. By \eq{e:zero2}, $V_\vep(f,I)=0$ for all
$\vep\ge\frac12\|x-y\|$. We assert that if $0<\vep<\frac12\|x-y\|$, then
  \begin{align}
V_\vep(f,I)&=\|x-y\|-2\vep\qquad\,\mbox{if}\quad\tau\in\{a,b\},\label{e:tab}\\[3pt]
V_\vep(f,I)&=2(\|x-y\|-2\vep)\quad\mbox{if}\quad a<\tau<b.\label{e:ntab}
  \end{align}

In order to establish these equalities, we first note that the function $f$ from
\eq{e:ftau} can be expressed as (cf.~\eq{e:fxy})
  \begin{equation} \label{e:ftex}
\mbox{$f(t)\!=\!\vfi(t)(x-y)+y$, where $\vfi(t)\!\equiv\!\vfi_\tau(t)\!=\!\left\{
  \begin{tabular}{ccl}
$\!\!1$ & \mbox{if} & $t=\tau$,\\[2pt]
$\!\!0$ & \mbox{if} & $t\ne\tau$,
  \end{tabular}\right.$ \,$t\in I$.}
  \end{equation}

\emph{Proof of}~\eq{e:tab}. If $\tau\in\{a,b\}$, then $\vfi$ is monotone on $I$ with
$\mbox{\rm i}(\vfi)\!=\!0$, $\mbox{\rm s}(\vfi)\!=\!1$, and $V(\vfi,I)\!=\!|\vfi(I)|\!=\!1$.
Now, \eq{e:tab} follows from \eq{e:ftex}~and~\eq{e:mntn}.

\smallbreak
Note that function $g$ from \eq{e:gt1}, used in obtaining \eq{e:tab}, is of the form
  \begin{equation*}
g(t)=\biggl[\biggl(1-\frac{2\vep}{\|x-y\|\,}\biggr)\vfi(t)+
  \frac{\vep}{\|x-y\|}\biggr](x-y)+y,\quad t\in I,
  \end{equation*}
i.e., if $\mbox{\rm e}_{x,y}=(x-y)/\|x-y\|$ is the \emph{unit vector\/} 
(`directed from $y$ to $x$'), then
  \begin{equation} \label{e:unve}
g(\tau)=x-\vep\mbox{\rm e}_{x,y}\quad\mbox{and}\quad
g(t)=y+\vep\mbox{\rm e}_{x,y},\,\,\,t\in I\setminus\{\tau\}.
  \end{equation}
This implies $\|f-g\|_{\infty,I}=\vep$ (for all $\tau\in I$), and we have,
for $\tau\in\{a,b\}$,
  \begin{equation} \label{e:Vgta}
V(g,I)=|g(I)|=\|(x-\vep\mbox{\rm e}_{x,y})-(y+\vep\mbox{\rm e}_{x,y})\|
=\|x-y\|-2\vep.
  \end{equation}

\emph{Proof of}~\eq{e:ntab}. Suppose $a<\tau<b$ and $0<\vep<\frac12\|x-y\|$.
First, consider an arbitrary function $g\in M^I$ such that
$\|f-g\|_{\infty,I}=\sup_{t\in I}\|f(t)-g(t)\|\le\vep$. Since $P=\{a,\tau,b\}$ is a
partition of $I$, by virtue of \eq{e:10} and \eq{e:ftau}, we get
  \begin{align}
V(g,I)&\ge\|g(\tau)-g(a)\|+\|g(b)-g(\tau)\|\nonumber\\[3pt]
&\ge(\|f(\tau)-f(a)\|-2\vep)+(\|f(b)-f(\tau)\|-2\vep)\label{e:ggff2e}\\[3pt]
&=2(\|x-y\|-2\vep).\nonumber
  \end{align}
Due to the arbitrariness of $g$ as above, \eq{e:av} implies
$V_\vep(f,I)\ge2(\|x-y\|-2\vep)$. Now, for the function $g$ from \eq{e:unve},
the additivity of $V$ and \eq{e:Vgta} yield
  \begin{equation*}
V(g,I)\!=\!V(g,[a,\tau])+V(g,[\tau,b])\!=\!|g([a,\tau])|+|g([\tau,b])|
\!=\!2(\|x-y\|-2\vep),
  \end{equation*}
and so, $V_\vep(f,I)\!\le\! V(g,I)\!=\!2(\|x-y\|-2\vep)$. This completes the proof of~\eq{e:ntab}.
\end{example}

\begin{remark} \label{r:siq}
If $\vfi$ from \eq{e:fxy} is nonmonotone, inequalities in \eq{e:trdo} may be
\emph{strict}. In fact, supposing $a<\tau<b$, we find that the function $\vfi=\vfi_\tau$
from \eq{e:ftex} is not monotone, $|\vfi(I)|=1$ and $V(\vfi,I)=2$, and so, by
\eq{e:ntab}, inequalities \eq{e:trdo} for function $f$ from \eq{e:ftex} are of the form:
  \begin{equation*}
\|x-y\|-2\vep<V_\vep(f,I)=2(\|x-y\|-2\vep)<2\|x-y\|-2\vep
  \end{equation*} 
if $0<\vep<\frac12\|x-y\|$.
\end{remark}

\begin{example} \label{ex:midp}
Let $I=[a,b]$, $a<\tau<b$, $(M,\|\cdot\|)$ be a normed linear space, $x,y\in M$,
$x\ne y$, and $\al\in\Rb$. Define $f\in M^I$ by
  \begin{equation} \label{e:alp}
\mbox{$f(t)\!=\!x$ if $a\le t<\tau$, \,$f(\tau)\!=\!(1\!-\!\al)x\!+\!\al y$,
and $f(t)\!=\!y$ if $\tau<t\le b$.}
  \end{equation}
We are going to evaluate the approximate variation $\{V_\vep(f,I)\}_{\vep>0}$
for all $\al\!\in\!\Rb$.
For this, we consider three possibilities: $0\le\al\le1$, $\al<0$, and $\al>1$.

\emph{Case\/ $0\le\al\le1$}. We assert that (independently of $\al\in[0,1]$)
  \begin{equation} \label{e:veo}
V_\vep(f,I)=\left\{
  \begin{tabular}{ccr}
$\!\!\|x-y\|-2\vep$ & \mbox{if} & $0<\vep<\textstyle\frac12\|x-y\|$,\\[3pt]
$\!\!0$ & \mbox{if} & $\vep\ge\textstyle\frac12\|x-y\|$.
  \end{tabular}\right.
  \end{equation}
To see this, we note that $f$ can be represented in the form \eq{e:fxy}:
  \begin{equation*}
f(t)\!=\!\vfi(t)(x-y)+(1-\al)x+\al y\!\quad\mbox{with}\!\quad\vfi(t)=\left\{
  \begin{tabular}{ccc}
$\!\!\al$ & \mbox{if} & $a\le t<\tau$,\\[2pt]
$\!\!0$ & \mbox{if} & $t=\tau$,\\[2pt]
$\!\!\al\!-\!1$ & \mbox{if} & $\tau<t\le b$.
  \end{tabular}\right.
  \end{equation*}
Since $\al\in[0,1]$, $\vfi$ is nonicreasing on $I$ and $|\vfi(I)|=|\al-(\al-1)|=1$.
Hence, \eq{e:mntn} implies the first line in \eq{e:veo}. The second line in \eq{e:veo} is
a consequence of \eq{e:ov2}.

\emph{Case\/ $\al<0$}. The resulting form of $V_\vep(f,I)$ is given by \eq{e:cas1},
\eq{e:cas2} and \eq{e:buda}.
Now we turn to their proofs. We set $x_\al=(1-\al)x+\al y$ in \eq{e:alp} and note that
  \begin{align}
x_\al-x&=(-\al)(x-y)=(-\al)\|x-y\|\mbox{\rm e}_{x,y},\label{e:xax}\\[3pt]
x_\al-y&=(1-\al)(x-y)=(1-\al)\|x-y\|\mbox{\rm e}_{x,y},\label{e:xay}
  \end{align}
where $\mbox{\rm e}_{x,y}=(x-y)/\|x-y\|$.

Let us evaluate $|f(I)|$ and $V(f,I)$. Since $1-\al>-\al$, and  $\al<0$ implies
$1-\al>1$, by \eq{e:xax} and \eq{e:xay}, $\|x_\al-y\|>\|x_\al-x\|$ and
$\|x_\al-y\|>\|x-y\|$, and since $f$ assumes only values $x$, $x_\al$, and $y$,
  \begin{equation*}
|f(I)|=\|x_\al-y\|=(1-\al)\|x-y\|.
  \end{equation*}
For $V(f,I)$, by the additivity (V.2) of $V$, \eq{e:xax} and \eq{e:xay}, we find
  \begin{align*}
V(f,I)&=V(f,[a,\tau])+V(f,[\tau,b])=|f([a,\tau])|+|f([\tau,b])|\\[3pt]
&=\|f(\tau)-f(a)\|+\|f(b)-f(\tau)\|=\|x_\al-x\|+\|y-x_\al\|\\[3pt]
&=(-\al)\|x-y\|+(1-\al)\|x-y\|=(1-2\al)\|x-y\|.
  \end{align*}

Setting $c=c(t)=\frac12(x_\al+y)$ for all $t\in I$, we get, by \eq{e:xay},
  \begin{equation*}
\|x_\al-c\|=\|y-c\|=\textstyle\frac12\|x_\al-y\|=\frac12(1-\al)\|x-y\|=\frac12|f(I)|,
  \end{equation*}
and
  \begin{align*}
\|x-c\|&=\textstyle\|x-\frac12(x_\al+y)\|=\frac12\|(x\!-\!x_\al)+(x\!-\!y)\|
  =\frac12\|\al(x\!-\!y)+(x\!-\!y)\|\\[4pt]
&=\textstyle\frac12|\al+1|\!\cdot\!\|x\!-\!y\|\le\frac12(1+|\al|)\|x\!-\!y\|
  \stackrel{{\scriptscriptstyle(\al<0)}}{=}
  \frac12(1\!-\!\al)\|x\!-\!y\|=\frac12|f(I)|.
  \end{align*}
Hence $\|f-c\|_{\infty,I}\le\frac12|f(I)|$, and it follows from \eq{e:ze1} that
  \begin{equation} \label{e:buda}
V_\vep(f,I)=0\quad\mbox{if}\quad\vep\ge\textstyle\frac12|f(I)|
=\frac{1-\al}2\|x-y\|.
  \end{equation}

It remains to consider the case when $0<\vep<\frac{1-\al}2\|x-y\|$, which we split
into two subcases:
  \begin{equation*}
\mbox{(I) $0<\vep<\frac{(-\al)}2\|x-y\|$, and
(II) $\frac{(-\al)}2\|x-y\|\le\vep<\frac{1-\al}2\|x-y\|$.}
  \end{equation*}

\emph{Subcase\/}~(I). First, given $g\in M^I$ with $\|f-g\|_{\infty,I}\le\vep$, since
$P=\{a,\tau,b\}$ is a partition of $I$, applying \eq{e:ggff2e}, we get
  \begin{align*}
V(g,I)&\ge(\|f(\tau)-f(a)\|-2\vep)+(\|f(b)-f(\tau)\|-2\vep)\\[3pt]
&=((-\al)\|x-y\|-2\vep)+((1-\al)\|x-y\|-2\vep)\\[3pt]
&=(1-2\al)\|x-y\|-4\vep,
  \end{align*}
and so, by \eq{e:av}, $V_\vep(f,I)\ge(1-2\al)\|x-y\|-4\vep$. Now, we define a concrete
(=`test') function $g\in M^I$ by the rule:
  \begin{align}
g(t)&=x+\vep\mbox{\rm e}_{x,y}\quad\mbox{if}\quad a\le t<\tau,\nonumber\\[3pt]
g(\tau)&=x_\al-\vep\mbox{\rm e}_{x,y},\label{e:gco}\\[3pt]
g(t)&=y+\vep\mbox{\rm e}_{x,y}\quad\mbox{if}\quad \tau<t\le b.\nonumber
  \end{align}
Clearly, by \eq{e:alp} and \eq{e:gco}, $\|f-g\|_{\infty,I}=\vep$. Furthermore,
  \begin{align*}
V(g,I)&=\|g(\tau)-g(a)\|+\|g(\tau)-g(b)\|\\[3pt]
&=\|(x_\al-x)-2\vep\mbox{\rm e}_{x,y}\|+\|(x_\al-y)-2\vep\mbox{\rm e}_{x,y}\|\\[3pt]
&=\Bigl\|(-\al)\|x-y\|\mbox{\rm e}_{x,y}-2\vep\mbox{\rm e}_{x,y}\Bigr\|
  +\Bigl\|(1-\al)\|x-y\|\mbox{\rm e}_{x,y}-2\vep\mbox{\rm e}_{x,y}\Bigr\|\\[3pt]
&=\bigl|(-\al)\|x-y\|-2\vep\bigr|+\bigl|(1-\al)\|x-y\|-2\vep\bigr|.
  \end{align*}
Assumption (I) implies $2\vep<(-\al)\|x-y\|<(1-\al)\|x-y\|$, so
  \begin{equation*}
V(g,I)\!=\!((-\al)\|x-y\|-2\vep)+((1-\al)\|x-y\|-2\vep)\!=\!(1-2\al)\|x-y\|-4\vep.
  \end{equation*}
By \eq{e:av}, $V_\vep(f,I)\le V(g,I)=(1-2\al)\|x-y\|-4\vep$. Thus,
  \begin{equation} \label{e:cas1}
V_\vep(f,I)=(1-2\al)\|x-y\|-4\vep\quad\mbox{if}\quad 
0<\vep<\textstyle\frac{(-\al)}2\|x-y\|.
  \end{equation}
Note that, in agreement with Lemma~\ref{l:71}(a), $V_\vep(f,I)\to V(f,I)$ as
$\vep\to+0$.

\emph{Subcase\/}~(II). First, given $g\in M^I$ with $\|f-g\|_{\infty,I}\le\vep$,
by virtue of \eq{e:10} and \eq{e:xay}, we get
  \begin{equation*}
V(g,I)\ge\|g(b)-g(\tau)\|\ge\|f(b)-f(\tau)\|-2\vep=(1-\al)\|x-y\|-2\vep,
  \end{equation*}
and so, definition \eq{e:av} implies $V_\vep(f,I)\ge(1-\al)\|x-y\|-2\vep$. Now, define
a test function $g\in M^I$ by
  \begin{equation} \label{e:gspe}
\mbox{$g(t)=x_\al-\vep\mbox{\rm e}_{x,y}$ \,if \,$a\le t\le\tau$, and
\,$g(t)=y+\vep\mbox{\rm e}_{x,y}$ \,if \,$\tau<t\le b$.}
  \end{equation}

Let us show that $\|f-g\|_{\infty,I}\le\vep$. Clearly, by \eq{e:alp}, $\|f(t)-g(t)\|=\vep$
for all $\tau\le t\le b$. Now, suppose $a\le t<\tau$. We have, by \eq{e:xax},
  \begin{align*}
\|f(t)-g(t)\|&=\|x-x_\al+\vep\mbox{\rm e}_{x,y}\|=
  \Bigl\|\al\|x-y\|\mbox{\rm e}_{x,y}+\vep\mbox{\rm e}_{x,y}\Bigr\|\\[2pt]
&=\bigl|\al\|x-y\|+\vep\bigr|\equiv A_\al.
  \end{align*}
Suppose first that $\al>-1$ (i.e., $x_\al$ is closer to $x$ than $x$ to $y$ in the sense that
$\|x_\al-x\|=(-\al)\|x-y\|<\|x-y\|$). Then $(-\al)<\frac12(1-\al)$, and so, for $\vep$
from subcase (II) we have either
  \begin{equation*}
\mbox{(II${}_1$) $\frac{(-\al)}2\|x\!-\!y\|\le\vep\!<\!(-\al)\|x\!-\!y\|$, or
(II{$_2$}) $(-\al)\|x\!-\!y\|\le\vep\!<\!\frac{1-\al}2\|x\!-\!y\|$.}
  \end{equation*}
In case (II${}_1$), $\al\|x-y\|+\vep<0$, which implies $A_\al=(-\al)\|x-y\|-\vep$.
Hence, the left-hand side inequality in (II${}_1$) gives $A_\al\le\vep$. In case (II${}_2$),
$\al\|x-y\|+\vep\ge0$, which implies $A_\al=\al\|x-y\|+\vep<\vep$ (because $\al<0$).

Now, suppose $\al\le-1$ (i.e., $\|x-y\|\le(-\al)\|x-y\|=\|x_\al-x\|$, which means that
$x_\al$ is farther from $x$ than $x$ from $y$), so that $\frac12(1-\al)\le(-\al)$.
In this case, assumption (II) implies only condition (II${}_1$), and so, as above,
$A_\al=(-\al)\|x-y\|-\vep\le\vep$. This completes the proof of $\|f-g\|_{\infty,I}\le\vep$.

For the variation $V(g,I)$ of function $g$ from \eq{e:gspe}, we have, by \eq{e:xay},
  \begin{align*}
V(g,I)&=\|(x_\al-\vep\mbox{\rm e}_{x,y})-(y+\vep\mbox{\rm e}_{x,y})\|
  =\|(x_\al-y)-2\vep\mbox{\rm e}_{x,y}\|\\[3pt]
&=\Bigl\|(1-\al)\|x-y\|\mbox{\rm e}_{x,y}-2\vep\mbox{\rm e}_{x,y}\Bigr\|
  =(1-\al)\|x-y\|-2\vep.
  \end{align*}
Hence $V_\vep(f,I)\le V(g,I)=(1-\al)\|x-y\|-2\vep$. Thus, we have shown that
  \begin{equation} \label{e:cas2}
V_\vep(f,I)=(1-\al)\|x-y\|-2\vep\quad\mbox{if}\quad 
\textstyle\frac{(-\al)}2\|x-y\|\le\vep<\frac{(1-\al)}2\|x-y\|.
  \end{equation}

\emph{Case\/ $\al>1$}. \label{p:a>1}
We reduce this case to the case $\al<0$ and apply Lemma~\ref{l:chvar}.
Set $T=[a',b']$ with $a'=2\tau-b$ and $b'=2\tau-a$, so that $a'<\tau<b'$, and define
$\vfi:T\to\Rb$ by $\vfi(t)=2\tau-t$, $a'\le t\le b'$. Clearly, $\vfi$ is strictly decreasing
on $T$, $\vfi(T)=[a,b]=I$, and $\vfi(\tau)=\tau$. Let us show that the composed
function $f'=f\circ\vfi\in M^T$ is of the same form as \eq{e:alp}.

If $a'\le t<\tau$, then $\tau<\vfi(t)\le b$, and so, by \eq{e:alp},
$f'(t)=f(\vfi(t))=y$; if $t=\tau$, then $f'(\tau)=f(\vfi(\tau))=f(\tau)=x_\al$;
and if $\tau<t\le b'$, then $a\le\vfi(t)<\tau$, and so, $f'(t)=f(\vfi(t))=x$.
Setting $x'=y$, $y'=x$, and $\al'=1-\al$, we get $\al'<0$,
  \begin{equation*}
\mbox{$f'(t)=x'$ \,if \,$a'\le t<\tau$,\quad $f'(t)=y'$ \,if \,$\tau<t\le b'$,}
  \end{equation*}
and
  \begin{equation*}
f'(\tau)=x_\al=(1-\al)x+\al y=\al'y'+(1-\al')x'=(1-\al')x'+\al'y'\equiv x'_{\al'}.
  \end{equation*}

By Lemma~\ref{l:chvar}, given $\vep>0$,
  \begin{equation*}
V_\vep(f,I)=V_\vep(f,[a,b])=V_\vep(f,\vfi(T))=V_\vep(f\circ\vfi,T)=V_\vep(f',[a',b']),
  \end{equation*}
where, since $f'$ is of the form \eq{e:alp}, $V_\vep(f',[a',b'])$ is given by
\eq{e:cas1}, \eq{e:cas2} and \eq{e:buda} with $f$, $x$, $y$, and $\al$ replaced by
$f'$, $x'$, $y'$, and $\al'$, respectively. Noting that $\|x'-y'\|=\|x-y\|$, $1-\al'=\al$,
$1-2\al'=2\al-1$, and $(-\al')=\al-1$, we get, for $\al>1$:
  \begin{equation*}
V_\vep(f,I)=\left\{
  \begin{tabular}{ccr}
$\!\!(2\al-1)\|x-y\|-4\vep$ & \mbox{if} &
   \mbox{$0<\,\vep<\frac{\al-1}2\|x-y\|$},\\[3pt]
$\!\!\al\|x-y\|-2\vep$ & \mbox{if} & 
  \mbox{$\frac{\al-1}2\|x-y\|\le\,\,\vep<\,\frac\al2\|x-y\|$,\,\,\,\,}\\[3pt]
$\!\!0$ & \mbox{if} & \mbox{$\,\,\vep\,\ge\,\frac\al2\|x-y\|$.\,\,\,}
  \end{tabular}\right.
  \end{equation*}
Finally, we note that, for $\al>1$, we have, by \eq{e:xax} and \eq{e:xay},
  \begin{equation*}
V(f,I)\!=\!\|x-x_\al\|+\|x_\al-y\|\!=\!\al\|x-y\|+(\al-1)\|x-y\|\!=\!(2\al-1)\|x-y\|,
  \end{equation*}
and so, $V_\vep(f,I)\to V(f,I)$ as $\vep\to+0$.
\end{example}

\section{The generalized Dirichlet function}

\begin{example}[generalized Dirichlet function] \label{ex:gDf} \rm
This is an illustration of Lem\-ma~\ref{l:Regc} illuminating several specific features
of the approximate variation.

(a) Let $T=I=[a,b]$, $(M,d)$ be a metric space, and $\Qb$ denote (as usual) the set
of all rational numbers. We set $I_1=I\cap\Qb$ and $I_2=I\setminus\Qb$. A function
$f\in M^I$ is said to be a \emph{generalized Dirichlet function\/} if $f\in\Bd(I;M)$ and
  \begin{equation*}
\Delta f\equiv\Delta f(I_1,I_2)=\inf_{s\in I_1,t\in I_2}d(f(s),f(t))>0.
  \end{equation*}

Clearly, $f\!\notin\!\Reg(I;M)$ (in fact, if, say, $a\!<\!\tau\!\le\! b$, then for all
\mbox{$\delta\!\in\!(0,\tau-a)$},
$s\in(\tau-\delta,\tau)\cap\Qb$ and $t\in(\tau-\delta,\tau)\setminus\Qb$, we have
$d(f(s),f(t))\ge\Delta f>0$).

Setting $|f(I_1,I_2)|=\sup_{s\in I_1,t\in I_2}d(f(s),f(t))$, we find
  \begin{equation*}
|f(I_1,I_2)|\le|f(I_1)|+d(f(s_0),f(t_0))+|f(I_2)|,\quad s_0\in I_1,\,\,t_0\in I_2,
  \end{equation*}
and
  \begin{equation*}
0<\Delta f\le|f(I_1,I_2)|\le|f(I)|=\max\{|f(I_1)|,|f(I_2)|,|f(I_1,I_2)|\}.
  \end{equation*}
Furthermore (cf.~Lemma~\ref{l:Regc}), we have
  \begin{equation} \label{e:Dirin}
\mbox{$V_\vep(f,I)=\infty$ \,\,if \,\,$0<\vep<\Delta f/2$, \,and 
\,\,$V_\vep(f,I)=0$ \,\,if \,\,$\vep\ge|f(I)|$;}
  \end{equation}
the values of $V_\vep(f,I)$ for $\Delta f/2\le\vep<|f(I)|$ depend on (the structure of)
the metric space $(M,d)$ in general (see items (b), (c) and (d) below). The second
assertion in \eq{e:Dirin} is a consequence of \eq{e:zero}.
In order to prove the first assertion in \eq{e:Dirin}, we show that if $0<\vep<\Delta f/2$,
$g\in M^I$ and $d_{\infty,I}(f,g)\le\vep$, then $V(g,I)=\infty$ (cf.~\eq{e:besk}).
In fact, given $n\in\Nb$, let $P=\{t_i\}_{i=0}^{2n}$ be a partition of $I$ (i.e.,
$a\le t_0<t_1<t_2<\dots<t_{2n-1}<t_{2n}\le b$) such that
$\{t_{2i}\}_{i=0}^n\subset I_1$ and $\{t_{2i-1}\}_{i=1}^n\subset I_2$. Given
$i\in\{1,2,\dots,n\}$, by the triangle inequality for $d$, we have
  \begin{align}
d(f(t_{2i}),f(t_{2i-1}))&\le d(f(t_{2i}),g(t_{2i}))\!+\!d(g(t_{2i}),g(t_{2i-1}))
  \!+\!d(g(t_{2i-1}),f(t_{2i-1})) \nonumber\\[3pt]
&\le d_{\infty,I_1}(f,g)+d(g(t_{2i}),g(t_{2i-1}))+d_{\infty,I_2}(g,f) \nonumber\\[3pt]
&\le\vep+d(g(t_{2i}),g(t_{2i-1}))+\vep. \label{e:twe1}
  \end{align}
It follows from the definition of $V(g,I)$ that
  \begin{align}
V(g,I)&\ge\sum_{i=1}^{2n}d(g(t_i),g(t_{i-1}))\ge\sum_{i=1}^nd(g(t_{2i}),g(t_{2i-1}))
  \nonumber\\
&\ge\sum_{i=1}^n\Bigl(d(f(t_{2i}),f(t_{2i-1}))-2\vep\Bigr)\ge (\Delta f-2\vep)n.
  \label{e:twe2}
  \end{align}
It remains to take into account the arbitrariness of $n\in\Nb$.

In a particular case of the classical \emph{Dirichlet function\/} $f=\Dc_{x,y}:I\to M$
defined, for $x,y\in M$, $x\ne y$, by
  \begin{equation} \label{e:Dir}
\mbox{$\Dc_{x,y}(t)=x$ \,\,if \,\,$t\in I_1$, \,and 
\,\,$\Dc_{x,y}(t)=y$ \,\,if \,\,$t\in I_2$,}
  \end{equation}
we have $\Delta f=\Delta\Dc_{x,y}=d(x,y)$ and $|f(I)|=|\Dc_{x,y}(I_1,I_2)|=d(x,y)$,
and so, \eq{e:Dirin} assumes the form
(which  was established in \cite[assertion~(4.4)]{Studia17}):
  \begin{equation} \label{e:Dirass}
\mbox{$V_\vep(f,I)\!=\!\infty$ \,\,if \,\,$0\!<\!\vep\!<\!d(x,y)/2$, \,and 
\,\,$V_\vep(f,I)\!=\!0$ \,\,if \,\,$\vep\!\ge\! d(x,y)$.}
  \end{equation}

(b) \label{p:L71b} This example and items (c) and (d) below illustrate the sharpness of
assertions in  Lemma~\ref{l:71}(b),\,(d). Let $(M,\|\cdot\|)$ be a normed linear space
with induced metric $d$ (cf.\ p.~\pageref{p:nls}) and $f=\Dc_{x,y}$ be the Dirichlet
function \eq{e:Dir}. Setting $c=c(t)=(x+y)/2$, $t\in I$, we find
  \begin{equation*}
2d_{\infty,I}(f,c)=2\max\{\|x-c\|,\|y-c\|\}=\|x-y\|=d(x,y)=|f(I)|,
  \end{equation*}
and so, by \eq{e:zero2} and \eq{e:2max}, the second equality in \eq{e:Dirass}
is refined as follows:
  \begin{equation} \label{e:refi}
V_\vep(f,I)=0\quad\mbox{for all}\quad\vep\ge\frac{\|x-y\|}2=\frac{d(x,y)}2.
  \end{equation}
This shows the sharpness of the inequality in Lemma~\ref{l:71}(b). Inequalities in
Lemma~\ref{l:71}(d) assume the form:
  \begin{equation*}
\inf_{\vep>0}(V_\vep(f,I)+\vep)=\frac{\|x-y\|}2<|f(I)|=\|x-y\|=
\inf_{\vep>0}(V_\vep(f,I)+2\vep).
  \end{equation*}

More generally, \eq{e:Dirass} and \eq{e:refi} hold for a complete and \emph{metrically
convex\/} (in the sense of K.~Menger \cite{Menger}) metric space $(M,d)$ (see
\cite[Example~1]{Studia17}).

(c) In the context of \eq{e:Dir}, assume that $M=\{x,y\}$ is the two-point set with
metric $d$. If $0<\vep<d(x,y)$, $g\in M^I$ and $d_{\infty,I}(f,g)\le\vep$, then
$g=f=\Dc_{x,y}$ on $I$, and so, $V(g,I)=\infty$. By \eq{e:besk}, the first assertion
in \eq{e:Dirass} can be expressed more exactly as $V_\vep(f,I)=\infty$ for all
$0<\vep<d(x,y)$. Now, (in)equalities in Lemma~\ref{l:71}(d) are of the form:
  \begin{equation*}
\inf_{\vep>0}(V_\vep(f,I)+\vep)=d(x,y)=|f(I)|<2d(x,y)=
\inf_{\vep>0}(V_\vep(f,I)+2\vep).
  \end{equation*}

(d) \label{p:L71d} Given $x,y\in\Rb$, $x\ne y$, and $0\le r\le|x-y|/2$, we set
  \begin{equation*}
M_r=\Rb\setminus\bigl(\textstyle\frac12(x+y)-r,\frac12(x+y)+r\bigr)
\quad\mbox{and}\quad d(u,v)=|u-v|,\,\,u,v\in M_r.
  \end{equation*}
Note that $(M_r,d)$ is a proper metric space (cf.~p.~\pageref{p:properms}). If
$f=\Dc_{x,y}:I\to M_r$ is the Dirichlet function \eq{e:Dir} on $I$, we claim that
  \begin{equation} \label{e:Mr}
\mbox{$V_\vep(f,I)=\infty$ \,\,if \,\,$0<\vep<\frac12|x-y|+r$, \,and 
\,$V_\vep(f,I)=0$ \,otherwise.}
  \end{equation}

\emph{Proof of~\eq{e:Mr}}. Since $M_0=\Rb$, assertion \eq{e:Mr} for $r=0$ follows
from \eq{e:Dirass} and \eq{e:refi}. Now, suppose $r>0$. From \eq{e:Dirass}, we find
$V_\vep(f,I)=\infty$ if $0<\vep<\frac12|x-y|$, and $V_\vep(f,I)=0$ if $\vep\ge|x-y|$.
So, only the case when $\frac12|x-y|\le\vep<|x-y|$ is to be considered. We split this
case into two subcases:
  \begin{equation*}
\mbox{(I) $\frac12|x-y|\le\vep<\frac12|x-y|+r$, and
(II) $\frac12|x-y|+r\le\vep<|x-y|$.}
  \end{equation*}

\emph{Case}~(I). Let us show that if $g:I\to M_r$ and $d_{\infty,I}(f,g)\le\vep$, then
$V(g,I)=\infty$. Given $t\in I=I_1\cup I_2$, the inclusion $g(t)\in M_r$ is equivalent to
  \begin{equation} \label{e:gtimr}
g(t)\le\textstyle\frac12(x+y)-r\quad\mbox{or}\quad g(t)\ge\textstyle\frac12(x+y)+r,
  \end{equation}
and condition $d_{\infty,I}(f,g)=|f-g|_{\infty,I}\le\vep$ is equivalent to
  \begin{equation} \label{e:difg}
\mbox{$|x-g(s)|\le\vep$ \,$\forall\,s\in I_1$, \,and \,$|y-g(t)|\le\vep$ \,$\forall\,t\in I_2$.}
  \end{equation}
Due to the symmetry in $x$ and $y$ everywhere, we may assume that $x<y$.

Suppose $s\in I_1$. The first condition in \eq{e:difg} and assumption (I) imply
  \begin{equation*}
x-\vep\le g(s)\le x+\vep<\textstyle x+\frac12|x-y|+r=x+\frac12(y-x)+r=\frac12(x+y)+r,
  \end{equation*}
and so, by \eq{e:gtimr}, we find $g(s)\le\frac12(x+y)-r$. Note that, by (I),
  \begin{equation*}
-\vep\le g(s)-x\le\textstyle\frac12(x+y)-r-x=\frac12(y-x)-r=\frac12|y-x|-r\le\vep-r<\vep.
  \end{equation*}

Given $t\in I_2$, the second condition in \eq{e:difg} and assumption (I) yield
  \begin{equation*}
y+\vep\ge g(t)\ge y-\vep>\textstyle y-\frac12|x-y|-r=y-\frac12(y-x)-r=\frac12(x+y)-r,
  \end{equation*}
and so, by \eq{e:gtimr}, we get $g(t)\ge\frac12(x+y)+r$. Note also that, by (I),
  \begin{equation*}
\vep\ge g(t)-y\ge\textstyle\frac12(x+y)+r-y=\frac12(x-y)+r=-\frac12|x-y|+r
\ge-\vep+r>-\vep.
  \end{equation*}

Thus, we have shown that, given $s\in I_1$ and $t\in I_2$,
  \begin{equation} \label{e:ggts}
g(t)-g(s)\ge\textstyle\frac12(x+y)+r-\bigl(\frac12(x+y)-r\bigr)=2r.
  \end{equation}
Given $n\in\Nb$, let $\{t_i\}_{i=0}^{2n}$ be a partition of $I$ such that
$\{t_{2i}\}_{i=0}^n\subset I_1$ and $\{t_{2i-1}\}_{i=1}^n\subset I_2$. Taking into
account \eq{e:ggts} with $s=t_{2i}$ and $t=t_{2i-1}$, we get
  \begin{equation*}
V(g,I)\ge\sum_{i=1}^{2n}|g(t_i)-g(t_{i-1})|\ge\sum_{i=1}^n\bigl(
g(t_{2i-1})-g(t_{2i})\bigr)\ge2rn.
  \end{equation*}

\emph{Case}~(II). We set $c=c(t)=\vep+\min\{x,y\}$, $t\in I$; under our
 assumption
$x<y$, we have $c=\vep+x$. Note that $c\in M_r$: in fact, (II) and $x<y$ imply
$\frac12(y-x)+r\le\vep<y-x$, and so, $\frac12(x+y)+r\le c=\vep+x<y$.
If $s\in I_1$, we find $|x-c(s)|=\vep$, and if $t\in I_2$, we get, by assumption~(II),
  \begin{equation*}
|y-c(t)|=|y-x-\vep|=y-x-\vep\le|x-y|-\textstyle\frac12|x-y|-r\le\frac12|x-y|+r\le\vep.
  \end{equation*}
It follows that (cf.~\eq{e:difg}) $d_{\infty,I}(f,c)\le\vep$, and since $c$ is constant
on $I$, we conclude from \eq{e:ze1} that $V_\vep(f,I)=0$. This completes
the proof of \eq{e:Mr}.
\sq

Two conclusions from \eq{e:Mr} are in order. First, given $0\le r\le\frac12|x-y|$ and
$\vep>0$, $V_\vep(f,I)=0$ if and only if $|f(I)|=|x-y|\le2\vep-2r$ (cf.\ \eq{e:zero}
and Lemma~\ref{l:71}(b)). Second, the inequalities in Lemma~\ref{l:71}(d) are
as follows:
  \begin{align*}
\inf_{\vep>0}(V_\vep(f,I)+\vep)&=\textstyle\frac12|x-y|+r\le|f(I)|=|x-y|\\[2pt]
&\le|x-y|+2r=\inf_{\vep>0}(V_\vep(f,I)+2\vep).
  \end{align*}
The inequalities at the left and at the right become equalities for $r=\frac12|x-y|$ and
$r=0$, respectively; otherwise, the mentioned inequalities are strict.
\label{p:36d}

(e) \label{p:rico} Let $x,y\in\Rb$, $x\ne y$, and $0\le r<|x-y|/2$. We set
  \begin{equation*}
M_r=\Rb\setminus\bigl[\textstyle\frac12(x+y)-r,\frac12(x+y)+r\bigr]
\quad\mbox{and}\quad d(u,v)=|u-v|,\,\,u,v\in M_r.
  \end{equation*}
Note that $(M_r,d)$ is an improper metric space. For the
Dirichlet function $f=\Dc_{x,y}:I\to M_r$ from \eq{e:Dir}, we have:
  \begin{equation} \label{e:Mrim}
\mbox{$V_\vep(f,I)=\infty$ \,\,if \,\,$0<\vep\le\frac12|x-y|+r$, \,and 
\,$V_\vep(f,I)=0$ \,otherwise.}
  \end{equation}
Clearly, the function $\vep\mapsto V_\vep(f,I)$ is not continuous from the right at
$\vep=\frac12|x-y|+r$ (cf.\ Lemma~\ref{l:proper}(a)). The proof of \eq{e:Mrim}
follows the same lines as those of \eq{e:Mr}, so we present only the necessary
modifications. We split the case when $\frac12|x-y|\le\vep<|x-y|$ into two subcases:
  \begin{equation*}
\mbox{(I) $\frac12|x-y|\le\vep\le\frac12|x-y|+r$, and
(II) $\frac12|x-y|+r<\vep<|x-y|$.}
  \end{equation*}

\emph{Case}~(I). Given $g:I\to M_r$ with $d_{\infty,I}(f,g)\le\vep$, to see that
$V(g,I)=\infty$, we have \emph{strict\/} inequalities in \eq{e:gtimr}, conditions
\eq{e:difg}, and assume that $x<y$. If $s\in I_1$, then (as above)
$g(s)\le\frac12(x+y)+r$, and so, by (strict) \eq{e:gtimr}, $g(s)<\frac12(x+y)-r$.
If $t\in I_2$, then $g(t)\ge\frac12(x+y)-r$, and so, by \eq{e:gtimr},
$g(t)>\!\frac12(x+y)+r$. Thus, $g$ is discontinuous at every point of $I=I_1\cup I_2$,
and so, $V(g,I)=\infty$ (in fact, if, on the contrary, $g$ is continuous at, say,
a point $s\in I_1$, then the inequality $g(s)\!<\!\frac12(x+y)-r$ holds in a
neighbourhood~of~$s$, and since the neighbourhood contains an irrational point
$t\in I_2$, we get $g(t)>\!\frac12(x+y)+r$, which is a contradiction; recall also that
a $g\in\BV(I;\Rb)$ is continuous on $I$ apart, possibly, an at most countable subset
of $I$).

\emph{Case}~(II). It is to be noted only that $\frac12(x+y)+r<c=\vep+x<y$, and so,
$c\in M_r$; in fact, by (II) and assumption $x<y$, $\frac12(y-x)+r<\vep<y-x$. \qed
\end{example}

\section{Examples with convergent sequences} \label{ss:ecnv}

\begin{example} \label{ex:rieq} \rm
The left limit $V_{\vep-0}(f,T)$ in Lemma~\ref{l:uc}(a) cannot, in general, be replaced
by $V_\vep(f,T)$. To see this, we let $T=I$, $(M,\|\cdot\|)$ be a normed linear space,
$\{x_j\}$, $\{y_j\}\subset M$ be two sequences, $x,y\in M$, $x\ne y$, and
$x_j\to x$ and $y_j\to y$ in $M$ as $j\to\infty$. If $f_j=\Dc_{x_j,y_j}$, $j\in\Nb$,
and $f=\Dc_{x,y}$ are Dirichlet functions \eq{e:Dir} on $I$, then $f_j\rra f$ on $I$,
which follows from
  \begin{equation*}
\|f_j-f\|_{\infty,I}=\max\{\|x_j-x\|,\|y_j-y\|\}\to0\quad\mbox{as}\quad j\to\infty.
  \end{equation*}
The values $V_\vep(f,I)$ are given by \eq{e:Dirass} and \eq{e:refi}, and, similarly,
if $j\in\Nb$,
  \begin{equation} \label{e:steen}
\mbox{$V_\vep(f_j,I)=\infty$ \,if \,$0<\vep<\frac12\|x_j-y_j\|$, \,\,\,
$V_\vep(f_j,I)=0$ \,if \,$\vep\ge\frac12\|x_j-y_j\|$.}
  \end{equation}
Setting $\vep=\frac12\|x-y\|$, $\al_j=1+(1/j)$, $x_j=\al_jx$ and $y_j=\al_jy$,
$j\in\Nb$, we find
  \begin{equation*}
V_{\vep+0}(f,I)=V_\vep(f,I)=0<\infty=V_{\vep-0}(f,I),
  \end{equation*}
whereas, since $\vep<\frac12\al_j\|x-y\|=\frac12\|x_j-y_j\|$ for all $j\in\Nb$,
  \begin{equation*}
\mbox{$V_\vep(f_j,I)=\infty$ for all $j\in\Nb$, and so,
$\D\lim_{j\to\infty}V_\vep(f_j,I)=\infty$.}
  \end{equation*}
\end{example}

\begin{example} \label{ex:voo} \rm
The right-hand side inequality in Lemma~\ref{l:uc}(a) may not hold if $\{f_j\}\subset M^T$
converges to $f\in M^T$ only \emph{pointwise\/} on $T$. To see this, suppose
$C\equiv\inf_{j\in\Nb}|f_j(T)|>0$ and $f=c$ (is a constant function) on~$T$. Given
$0<\vep<C/2$, Lemma~\ref{l:71}(f) implies
  \begin{equation*}
\mbox{$V_\vep(f_j,T)\ge|f_j(T)|-2\vep\ge C-2\vep>0=V_\vep(c,T)=V_\vep(f,T)$,
\quad $j\in\Nb$.}
  \end{equation*}
For instance, given a sequence $\{\tau_j\}\subset(a,b)\subset I=[a,b]$ such that
$\tau_j\to a$ as $j\to\infty$, and $x,y\in M$, $x\ne y$, defining $\{f_j\}\subset M^I$
(as in Example~\ref{ex:thr}) by $f_j(\tau_j)=x$ and $f_j(t)=y$ if
$t\in I\setminus\{\tau_j\}$, $j\in\Nb$, we have $C=d(x,y)>0$ and
$f_j\to c\equiv y$ \pw\ on~$I$.

The arguments above are not valid for the uniform convergence: in fact, if
$f_j\rra f=c$ on $T$, then, by \eq{e:1s2}, $|f_j(T)|\le2d_{\infty,T}(f_j,c)\to0$ as
$j\to\infty$, and so, $C=0$.
\end{example}

\begin{example} \label{ex:ucbw} \rm
Lemma~\ref{l:uc}(b) is wrong for the \pw\ convergence \mbox{$f_j\to f$}.
To see this, let $T=I=[a,b]$, $(M,d)$ be a metric space,
$x,y\in M$, $x\ne y$, and, given $j\in\Nb$, define
$f_j\in M^I$ at $t\in I$ by: $f_j(t)=x$ if $j!t$ is integer, and $f_j(t)=y$ otherwise.
Each $f_j$ is a step function on $I$, so it is regulated and, hence, by Lemma~\ref{l:Regc},
$V_\vep(f_j,I)<\infty$ for all $\vep>0$. At the same time, the sequence $\{f_j\}$
converges (only) pointwise on $I$ to the Dirichlet function $f=\Dc_{x,y}$ (cf.~\eq{e:Dir}),
and so, by \eq{e:Dirass}, $V_\vep(f,I)=\infty$ for all $0<\vep<\frac12d(x,y)$.
\end{example}

\section{Examples with improper metric spaces} \label{ss:exims}

\pagebreak
\begin{example} \label{ex:ims1} \rm
This example is similar to Example~\ref{ex:gDf}(e) (p.~\pageref{p:rico}), but with
\emph{finite\/} values of $V_\vep(f,I)$. It shows that the assumption on the
\emph{proper\/} metric space $(M,d)$ in Lemma~\ref{l:proper}(b) is essential.

Let $x,y\in\Rb$, $x\ne y$, $M=\Rb\setminus\{\frac12(x+y)\}$ with metric
$d(u,v)=|u-v|$ for $u,v\in M$, $I=[a,b]$, $\tau=a$ or $\tau=b$, and $f\in M^I$ be
given by (cf.\ \eq{e:ftau}): $f(\tau)=x$ and $f(t)=y$ if $t\in I$, $t\ne\tau$.
We claim that (as in \eq{e:tab})
  \begin{equation} \label{e:511}
V_\vep(f,I)=\left\{
  \begin{tabular}{ccr}
$\!\!|x-y|-2\vep$ & \mbox{if} & $0<\vep<\frac12|x-y|$,\\[3pt]
$\!\!0$ & \mbox{if} & $\vep\ge\frac12|x-y|$.
  \end{tabular}\right.
  \end{equation}

In order to verify this, we note that $|f(I)|=|x-y|$, and so, by \eq{e:zero},
$V_\vep(f,I)=0$ for all $\vep\ge|x-y|$. We split the case $0<\vep<|x-y|$ into
  \begin{equation*}
\mbox{(I) $0<\vep<\frac12|x-y|$; \,(II) $\vep=\frac12|x-y|$;
\,(III) $\frac12|x-y|<\vep<|x-y|$.}
  \end{equation*}
Due to the symmetry (in $x$ and $y$), we may consider only the case $x<y$.

\emph{Case\/}~(I). Given $g\in M^I$ with $d_{\infty,I}(f,g)\le\vep$, inequality
\eq{e:10} implies
  \begin{equation*}
V(g,I)\ge|g(t)-g(\tau)|\ge|f(t)-f(\tau)|-2\vep=|x-y|-2\vep\quad(t\ne\tau),
  \end{equation*}
and so, by \eq{e:av}, $V_\vep(f,I)\ge|x-y|-2\vep$. Now, following \eq{e:unve}, we set
  \begin{equation} \label{e:givep}
g_\vep(\tau)=x+\vep\quad\mbox{and}\quad g_\vep(t)=y-\vep\,\,\,\,\mbox{if}\,\,\,\,
t\in I\setminus\{\tau\}.
  \end{equation}
We have $g_\vep:I\to M$, because assumption $0<\vep<\frac12(y-x)$ yields
  \begin{equation*}
g_\vep(\tau)=x+\vep<x+\textstyle\frac12(y-x)=\frac12(x+y)
  \end{equation*}
and, if $t\in I$, $t\ne\tau$,
  \begin{equation*}
g_\vep(t)=y-\vep>y-\textstyle\frac12(y-x)=\frac12(x+y).
  \end{equation*}
Moreover, $d_{\infty,I}(f,g_\vep)=\vep$ and
  \begin{equation*}
V(g_\vep,I)=|g_\vep(I)|=|(y-\vep)-(x+\vep)|\stackrel{\mbox{\tiny(I)}}{=}
y-x-2\vep=|x-y|-2\vep.
  \end{equation*}
Hence $V_\vep(f,I)\!\le\! V(g_\vep,I)\!=\!|x\!-\!y|\!-\!2\vep$.
This proves the upper line in \eq{e:511}.

\emph{Case\/}~(II). Here we rely on the full form of \eq{e:zer}. Let a sequence
$\{\vep_k\}_{k=1}^\infty$ be such that $0<\vep_k<\vep=\frac12|x-y|$ for all
$k\in\Nb$ and $\vep_k\to\vep$ as $k\to\infty$. We set $g_k=g_{\vep_k}$, $k\in\Nb$,
where $g_{\vep_k}$ is defined in \eq{e:givep} (with $\vep=\vep_k$). By Case~(I),
given $k\in\Nb$, $g_k\in\BV(I;M)$, $V(g_k,I)=|x-y|-2\vep_k$ and
$d_{\infty,I}(f,g_k)=\vep_k<\vep$. Since $V(g_k,I)\to0$ as $k\to\infty$, we conclude
from \eq{e:zer} that $V_\vep(f,I)=0$.

\emph{Case\/}~(III). We set $c(t)=\vep+\min\{x,y\}$, $t\in I$, and argue as in
Example \ref{ex:gDf}(e) (in~Case (II) for $r=0$). This gives $V_\vep(f,I)=0$, and
completes the proof of \eq{e:511}.

Clearly, the metric space $(M,d)$ in this example is \emph{not proper}. Let us show that
Lemma~\ref{l:proper}(b) is wrong. In fact, by contradition, assume that there is
$g\in\BV(I;M)$ with $d_{\infty,I}(f,g)\le\vep=\frac12|x-y|$ such that
$V_\vep(f,I)=V(g,I)$. By \eq{e:511}, $V(g,I)=0$, and so, $g=c$ is a constant
function $c:I\to M$. From $d_{\infty,I}(f,c)\le\vep$, we find $|x-c|=|f(\tau)-c|\le\vep$,
and so (as above, $x<y$),
  \begin{equation*}
c\le x+\vep=x+\textstyle\frac12(y-x)=\frac12(x+y),
  \end{equation*}
and, if $t\ne\tau$, then $|y-c|=|f(t)-c|\le\vep$, which implies
  \begin{equation*}
c\ge y-\vep=y-\textstyle\frac12(y-x)=\frac12(x+y).
  \end{equation*}
Hence, $c=c(t)=\frac12(x+y)$, $t\in I$, but $g=c\notin M^I$, which is a contradiction.
\end{example}

\begin{example} \label{ex:ims2} \rm
Here we show that the assumption that the metric space $(M,d)$ is \emph{proper\/} in
Lemma~\ref{l:proper}(c) is essential.

Let $x,y\in\Rb$, $x\ne y$, $M=\Rb\setminus\{\frac12(x+y)\}$ with metric
$d(u,v)=|u-v|$, $u,v\in M$, $I=[0,1]$, and the sequence $\{f_j\}\subset M^I$ be
given by
  \begin{equation} \label{e:seqfj}
f_j(t)=\left\{
  \begin{tabular}{ccl}
$\!\!x$ & \mbox{if} & $j!t$ is integer,\\[3pt]
$\!\!y$ & & otherwise,
  \end{tabular}\right.\,\,\quad t\in I,\,\,\,j\in\Nb.
  \end{equation}
We claim that, for all $j\in\Nb$,
  \begin{equation} \label{e:jfac}
V_\vep(f_j,I)=\left\{
  \begin{tabular}{ccr}
$\!\!2\!\cdot\! j!\,(|x-y|-2\vep)$ & \mbox{if} & $0<\vep<\frac12|x-y|$,\\[3pt]
$\!\!0$ & \mbox{if} & $\vep\ge\frac12|x-y|$.
  \end{tabular}\right.
  \end{equation}

Suppose that we have already established \eq{e:jfac}. The sequence $\{f_j\}$ from
\eq{e:seqfj} converges pointwise on $I$ to the Dirichlet function $f=\Dc_{x,y}$ from
\eq{e:Dir}. Let $\vep=\frac12|x-y|$. By \eq{e:Mrim} with $r=0$, we have
$V_\vep(f,I)=\infty$, while, by \eq{e:jfac}, we get $V_\vep(f_j,I)=0$ for all $j\in\Nb$,
and so, $\lim_{j\to\infty}V_\vep(f_j,I)=0$. Thus, the properness of metric space
$(M,d)$ in Lemma~\ref{l:proper}(c) is indispensable.

\emph{Proof of\/}~\eq{e:jfac}.
In what follows, we fix $j\in\Nb$. By \eq{e:zero}, $V_\vep(f_j,I)=0$ for all
$\vep\ge|f_j(I)|=|x-y|$. Now, we consider cases (I)--(III) from Example~\ref{ex:ims1}.

\emph{Case\/}~(I). We set $t_k=k/j!$ (so that $f_j(t_k)=x$) for $k=0,1,\dots,j!$, and
$s_k=\frac12(t_{k-1}+t_k)=(k-\frac12)/j!$ (so that $f_j(s_k)=y$) for $k=1,2,\dots,j!$.
So, we have the following partition of the interval $I=[0,1]$:
  \begin{equation} \label{e:partj}
0=t_0<s_1<t_1<s_2<t_2\dots<s_{j!-1}<t_{j!-1}<s_{j!}<t_{j!}=1.
  \end{equation}
If $g\in M^I$ is arbitrary with $d_{\infty,I}(f_j,g)\le\vep$, then, applying \eq{e:10}, we get
  \begin{align*}
V(g,I)&\ge\,\sum_{k=1}^{j!}\bigl(|g(t_k)-g(s_k)|+|g(s_k)-g(t_{k-1})|\bigr)\\
&\ge\,\sum_{k=1}^{j!}\bigl(|f_j(t_k)-f_j(s_k)|-2\vep+|f_j(s_k)-f_j(t_{k-1})|-2\vep\bigr)
  \\[3pt]
&=\,2\!\cdot\!j!\,(|x-y|-2\vep),
  \end{align*}
and so, by definition \eq{e:av}, $V_\vep(f_j,I)\ge2\!\cdot\!j!\,(|x-y|-2\vep)$.

Now, we define a test function $g_\vep$ on $I$ by (cf.~\eq{e:unve}): given $t\in I$,
 \begin{equation} \label{e:geep}
\mbox{$g_\vep(t)\!=\!x\!-\!\vep\mbox{\rm e}_{x,y}$\,\,\,if\, $j!t$ is integer, and\,
$g_\vep(t)\!=\!y\!+\!\vep\mbox{\rm e}_{x,y}$\,\,\,otherwise,}
  \end{equation}
where $\mbox{\rm e}_{x,y}=(x-y)/|x-y|$. Due to the symmetry in $x$ and $y$,
we may assume that $x<y$, and so, $g_\vep(t)=x+\vep$ if $j!t$ is integer, and
$g_\vep(t)=y-\vep$ otherwise, $t\in I$. We first note that $g_\vep:I\to M$; in fact,
if $j!t$ is integer, then
  \begin{equation*}
g_\vep(t)=x+\vep<x+\textstyle\frac12|x-y|=x+\frac12(y-x)=\frac12(x+y),
  \end{equation*}
and if $j!t$ is not integer, then
  \begin{equation*}
g_\vep(t)=y-\vep>y-\textstyle\frac12|x-y|=y-\frac12(y-x)=\frac12(x+y).
  \end{equation*}
Clearly, $d_{\infty,I}(f_j,g_\vep)=\vep$ and, by the additivity of $V$, for the partition
\eq{e:partj}, we find
  \begin{align*}
V(g_\vep,I)&=\sum_{k=1}^{j!}\bigl(V(g_\vep,[t_{k-1},s_k])+V(g_\vep,[s_k,t_k])\bigr)\\
&=\sum_{k=1}^{j!}\bigl(|g_\vep(s_k)-g_\vep(t_{k-1})|+|g_\vep(t_k)-g_\vep(s_k)|\bigr)\\
&=\sum_{k=1}^{j!}\bigl(|(y-\vep)-(x+\vep)|+|(x+\vep)-(y-\vep)|\bigr)\\[3pt]
&=2\!\cdot\!j!\,(|x-y|-2\vep).
  \end{align*}
Thus, $V_\vep(f_j,I)\le V(g_\vep,I)$, and this implies the upper line in \eq{e:jfac}.

\emph{Case\/}~(II). Let a sequence $\{\vep_k\}_{k=1}^\infty$ be such that
$0<\vep_k<\vep=\frac12|x-y|$, $k\in\Nb$, and $\vep_k\to\vep$ as $k\to\infty$.
Set $g_k=g_{\vep_k}$, $k\in\Nb$, where $g_{\vep_k}$ is given by \eq{e:geep}
(with $x<y$). We know from Case~(I) that, for every $k\in\Nb$, $g_k\in\BV(I;M)$,
$V(g_k,I)=2\!\cdot\!j!\,(|x-y|-2\vep_k)$, and $d_{\infty,I}(f_j,g_k)=\vep_k<\vep$.
Since $V(g_k,I)\to0$ as $k\to\infty$, we conclude from \eq{e:zer} that $V_\vep(f_j,I)=0$.

\emph{Case\/}~(III). We set $c=c(t)=\vep+\min\{x,y\}$ for all $t\in I$, i.e., under
our assumption $x<y$, $c=\vep+x$. Note that $c\in M$, because assumption (III) and 
$x<y$ imply $\vep>\frac12(y-x)$, and so, $c=\vep+x>\frac12(y-x)+x=\frac12(x+y)$.
Furthermore, $d_{\infty,I}(f_j,c)\le\vep$; in fact, given $t\in I$, if $j!t$ is integer, then
$|f_j(t)-c(t)|=|x-c|=\vep$, and if $j!t$ is not integer, then
  \begin{equation*}
|f_j(t)-c(t)|=|y-x-\vep|\stackrel{\mbox{\tiny(III)}}{=}y-x-\vep<
|x-y|-\textstyle\frac12|x-y|=\frac12|x-y|<\vep.
  \end{equation*}
Since $c$ is a constant function from $M^I$, we get $V_\vep(f_j,I)=0$.
This completes the proof of \eq{e:jfac}.
\end{example}

\chapter{Pointwise selection principles} \label{s:sp}

\section{Functions with values in a metric space} \label{ss:metsp}

Our first main result,  an extension of Theorem~3.8 from \cite{Fr}, is a
\emph{\pw\ selection principle\/} for metric space valued univariate functions in terms
of the approximate variation (see Theorem~\ref{t:SP}). 

In order to formulate it, we slightly generalize the notion of a regulated function
(cf.~p.~\pageref{p:reg}). If $T\subset\Rb$ is an arbitrary set and $(M,d)$ is a metric
space, a function $f\in M^T$ is said to be \emph{regulated\/} on $T$ (in symbols,
$f\in\Reg(T;M)$) if it satisfies the Cauchy condition at every left limit point of $T$ and
every right limit point of $T$. More explicitly, given $\tau\in T$, which is a \emph{left
limit point\/} of $T$ (i.e., $T\cap(\tau-\delta,\tau)\ne\es$ for all $\delta>0$), we have
$d(f(s),f(t))\to0$ as $T\ni s,t\to\tau-0$; and given $\tau'\in T$, which is a \emph{right
limit point\/} of $T$ (i.e., $T\cap(\tau',\tau'+\delta)\ne\es$ for all $\delta>0$), we have
$d(f(s),f(t))\to0$ as $T\ni s,t\to\tau'+0$. The proof of Lemma~\ref{l:Regc} in ($\supset$)
shows that
  \begin{equation} \label{e:supReg}
\Reg(T;M)\supset\{f\in M^T:\mbox{$V_\vep(f,T)<\infty$ for all $\vep>0$}\};
  \end{equation}
it suffices to set $\vfi_\vep(t)=V_\vep(f,T\cap(-\infty,t])$, $t\in T$, and treat
$s,t$ from~$T$.

 In contrast to the case when $T=I$ is an
interval (see p.~\pageref{p:reg}), a function $f\in\Reg(T;M)$ may not be bounded
in general: for instance, $f\in\Rb^T$ given on
$T=[0,1]\cup\{2-\frac1n\}_{n=2}^\infty$ by: $f(t)=t$ if $0\le t\le1$ and
$f(2-\frac1n)=n$ if $n\in\Nb$, is regulated in the above sense, but not bounded.

In what follows, we denote by $\Mon(T;\Rb^+)$ the set of all bounded nondecreasing
functions mapping $T$ into $\Rb^+=[0,\infty)$ ($\Rb^+$ may be replaced by~$\Rb$).
It is worthwhile to recall the classical \emph{Helly selection principle\/} for an arbitrary
set $T\subset\Rb$ (e.g., \cite[Proof of Theorem~1.3]{Sovae}): \emph{a uniformly
bounded sequence of functions from $\Mon(T;\Rb)$ contains a subsequence which
converges \pw\ on $T$ to a function from $\Mon(T;\Rb)$.} \label{p:Hellym}

\begin{theorem} \label{t:SP}
Let $\es\ne T\subset\Rb$ and $(M,d)$ be a metric space. If $\{f_j\}\subset M^T$ is a
\pw\ \rc\ sequence of functions on $T$ such that
  \begin{equation} \label{e:sp}
\limsup_{j\to\infty}V_\vep(f_j,T)<\infty\quad\mbox{for all}\quad\vep>0,
  \end{equation}
then there is a subsequence of $\{f_j\}$, which converges \pw\ on $T$ to a
{\sl bounded regulated} function $f\in M^T$. In addition, if $(M,d)$ is proper, then
$V_\vep(f,T)$ does not exceed the $\limsup$ in\/ \eq{e:sp} for all $\vep>0$.
\end{theorem}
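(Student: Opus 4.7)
The plan is to construct the limit function $f$ as the pointwise limit along a diagonally extracted subsequence of $\{f_j\}$, using BV approximants $g_j^k$ at scale $1/k$ together with the classical real-valued Helly principle applied to their variation functions. First I would set $C_k:=1+\limsup_{j\to\infty}V_{1/k}(f_j,T)<\infty$, and for every $j$ sufficiently large pick $g_j^k\in\BV(T;M)$ with $d_{\infty,T}(f_j,g_j^k)\le 1/k$ and $V(g_j^k,T)\le C_k+1$ via Definition~\ref{def:av}. Letting $\vfi_j^k(t):=V(g_j^k,T\cap(-\infty,t])$, the properties (V.1), (V.2) and \eq{e:10} give, for all $s\le t$ in $T$ and all $j,k$,
  \begin{equation*}
d(f_j(s),f_j(t))\le\bigl(\vfi_j^k(t)-\vfi_j^k(s)\bigr)+2/k,
  \end{equation*}
and each $\vfi_j^k$ is nondecreasing with $0\le\vfi_j^k\le C_k+1$.

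Each family $\{\vfi_j^k\}_j$ is then uniformly bounded and monotone on $T\subset\Rb$, so the classical Helly principle for bounded monotone real-valued functions (recalled on p.~\pageref{p:Hellym}) together with a diagonal argument over $k$ extracts a subsequence along which $\vfi_j^k(t)\to\vfi^k(t)$ for every $k\in\Nb$ and $t\in T$, with each $\vfi^k$ nondecreasing and bounded. Let $D\subset T$ be a countable set that is dense in $T$ and contains every discontinuity point of every $\vfi^k$ (countably many altogether). A further diagonal extraction, using the pointwise relative compactness of $\{f_j\}$ at each point of $D$, produces a subsequence $\{f_{j_n}\}$ such that $f_{j_n}(t)$ converges in $M$ to some limit $f(t)$ for every $t\in D$. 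The crux is to show $\{f_{j_n}(t)\}_n$ is Cauchy for every $t\in T\setminus D$: given $\vep>0$, I would choose $k$ with $2/k<\vep/3$, a neighborhood of $t$ on which the oscillation of $\vfi^k$ is less than $\vep/6$ (possible since $\vfi^k$ is continuous at $t$), a point $s\in D$ inside this neighborhood, and large $n,n'$; the displayed inequality with $j=j_n$, the triangle inequality through $f_{j_n}(s)$ and $f_{j_{n'}}(s)$, and the established convergences of $\vfi_{j_n}^k(s),\vfi_{j_n}^k(t)$ and of $\{f_{j_n}(s)\}_n$ then control $d(f_{j_n}(t),f_{j_{n'}}(t))$ below $\vep$. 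Since $\{f_{j_n}(t)\}_n$ lies in the compact set $\ov{\{f_j(t):j\in\Nb\}}\subset M$, it converges to some $f(t)\in M$.

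It remains to verify that $f$ has the stated properties. Passing to the limit in the displayed inequality yields $d(f(s),f(t))\le(\vfi^k(t)-\vfi^k(s))+2/k$ for all $s\le t$ in $T$, whence \eq{e:1s2} gives $|f(T)|\le C_k+1+2/k<\infty$ (so $f$ is bounded); and at any left limit point $\tau\in T$, the finite left limit of the bounded monotone $\vfi^k$ at $\tau$ forces $\limsup_{s,t\to\tau-0}d(f(s),f(t))\le 2/k$, so letting $k\to\infty$ and treating right limit points symmetrically gives $f\in\Reg(T;M)$. For the final assertion, if $(M,d)$ is proper, Lemma~\ref{l:proper}(c) applied to $\{f_{j_n}\}$ yields $V_\vep(f,T)\le\liminf_{n\to\infty}V_\vep(f_{j_n},T)\le\limsup_{j\to\infty}V_\vep(f_j,T)$ for every $\vep>0$. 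The main obstacle will be the intricate double diagonal construction and the delicate propagation of Cauchyness from the countable set $D$ to all of $T$ through the continuity properties of the limit variations $\vfi^k$; routing the argument through real-valued Helly (instead of attempting to invoke (V.4) for the sequence $\{g_j^k\}$, which need not be pointwise relatively compact in a non-proper $M$) is what makes the proof work without any completeness or properness assumption on $M$ in the main conclusion.
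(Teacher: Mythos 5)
Your proof is correct, and its overall architecture coincides with the paper's: monotone auxiliary functions indexed by a scale $k$ and by $j$, the classical Helly principle plus a diagonal argument over scales, a countable dense set absorbing all discontinuities of the limit monotone functions, convergence on that set by relative compactness, propagation of the Cauchy property to the remaining points via continuity of the limits, and Lemma~\ref{l:proper}(c) for the proper case. The genuine difference is in \emph{which} monotone functions you feed to Helly. The paper uses the $\vep$-variation functions $t\mapsto V_{\vep_k}(f_j,T\cap(-\infty,t])$ themselves; this forces it to invoke the semi-additivity Lemma~\ref{l:mor} to bound $V_{\vep_k}(f_j,T\cap[s,t])$ by an increment of the limit function, and then to extract a \emph{local} BV approximant on $T\cap[s,t]$ to convert that bound into an estimate for $d(f_j(s),f_j(t))$. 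You instead fix \emph{global} approximants $g_j^k$ once and for all and run Helly on their Jordan variation functions $\vfi_j^k(\cdot)=V(g_j^k,T\cap(-\infty,\cdot\,])$, so the exact additivity (V.2) gives the oscillation estimate $d(f_j(s),f_j(t))\le\vfi_j^k(t)-\vfi_j^k(s)+2/k$ directly, with no second approximation step and no appeal to Lemma~\ref{l:mor}. This buys a cleaner endgame: the same displayed inequality serves all three of Cauchyness off the dense set, boundedness of $f$, and $f\in\Reg(T;M)$. What it costs is only the bookkeeping that $g_j^k$ exists merely for $j$ large depending on $k$, which your diagonal extraction handles. Your closing observation --- that one must route through the real-valued Helly principle rather than apply (V.4) to $\{g_j^k\}_j$, since these need not be \pw\ \rc\ when $M$ is not proper --- is exactly right and is the same point the paper makes in a footnote to the proof of Theorem~\ref{t:SPprop}. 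Two trivial nits: the citation of \eq{e:1s2} for the boundedness of $f$ is superfluous (the bound $|f(T)|\le C_k+1+2/k$ already follows from your displayed limit inequality and $0\le\vfi^k\le C_k+1$), and the numerical constants in the Cauchy step need the usual final adjustment.
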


\proof
We present a direct proof based only on the properties of the approximate variation
from Section~\ref{ss:pro} (an indirect proof, based on the notion of the \emph{joint
modulus of variation of two functions}, was given in \cite[Theorem~3]{Studia17}).

By Lemma~\ref{l:ele}(b), given $\vep>0$ and $j\in\Nb$, the $\vep$-variation function
defined by the rule $t\mapsto V_\vep(f_j,T\cap(-\infty,t])$ is nondecreasing on $T$.
Note also that, by assumption \eq{e:sp}, for each $\vep>0$ there are $j_0(\vep)\in\Nb$
and a number $C(\vep)>0$ such that $V_\vep(f_j,T)\le C(\vep)$ for all $j\ge j_0(\vep)$.

We divide the rest of the proof into five steps.

1. Let us show that for each decreasing sequence $\{\vep_k\}_{k=1}^\infty$ of
positive numbers $\vep_k\to0$ there are a subsequence of $\{f_j\}$, again denoted by 
$\{f_j\}$, and a sequence of functions $\{\vfi_k\}_{k=1}^\infty\subset\Mon(T;\Rb^+)$
such that
  \begin{equation} \label{e:SP1}
\lim_{j\to\infty}V_{\vep_k}(f_j,T\cap(-\infty,t])=\vfi_k(t)\quad
\mbox{for \,all \,$k\in\Nb$ \,and \,$t\in T$.}
  \end{equation}

In order to prove \eq{e:SP1}, we make use of the Cantor diagonal procedure.
Lemma~\ref{l:ele}(b) and remarks above imply
  \begin{equation*}
\mbox{ $V_{\vep_1}(f_j,T\cap(-\infty,t])\le V_{\vep_1}(f_j,T)\le C(\vep_1)$
for all $t\in T$ and $j\ge j_0(\vep_1)$,}
  \end{equation*}
 i.e., the sequence of functions $\{t\mapsto V_{\vep_1}(f_j,T\cap(-\infty,t])\}%
_{j=j_0(\vep_1)}^\infty\subset\Mon(T;\Rb^+)$ is uniformly bounded on $T$ by
constant $C(\vep_1)$. By the classical Helly selection principle (for monotone functions),
there are a subsequence $\{J_1(j)\}_{j=1}^\infty$ of $\{j\}_{j=j_0(\vep_1)}^\infty$
and a function $\vfi_1\in\Mon(T;\Rb^+)$ such that
$V_{\vep_1}(f_{J_1(j)},T\cap(-\infty,t])$ converges to $\vfi_1(t)$ in $\Rb$ as
$j\to\infty$ for all $t\in T$. Now, choose the least number $j_1\in\Nb$ such that
$J_1(j_1)\ge j_0(\vep_2)$. Inductively, assume that $k\in\Nb$, $k\ge2$, and a
subsequence $\{J_{k-1}(j)\}_{j=1}^\infty$ of $\{j\}_{j=j_0(\vep_1)}^\infty$ and
the number $j_{k-1}\in\Nb$ with $J_{k-1}(j_{k-1})\ge j_0(\vep_k)$ are already
constructed. By Lemma~\ref{l:ele}(b), we get
  \begin{equation*}
\mbox{ $V_{\vep_k}(f_{J_{k-1}(j)},T\cap(-\infty,t])\!\le\!
V_{\vep_k}(f_{J_{k-1}(j)},T)\!\le\! C(\vep_k)$
for all $t\!\in\! T$ and $j\!\ge\! j_{k-1}$,}
  \end{equation*}
and so, by the Helly selection principle, there are a subsequence $\{J_k(j)\}_{j=1}^\infty$
of the sequence $\{J_{k-1}(j)\}_{j=j_{k-1}}^\infty$ and a function 
$\vfi_k\in\Mon(T;\Rb^+)$ such that
  \begin{equation*}
\lim_{j\to\infty}V_{\vep_k}(f_{J_k(j)},T\cap(-\infty,t])=\vfi_k(t)\quad
\mbox{for all}\quad t\in T.
  \end{equation*}
Given $k\in\Nb$, $\{J_j(j)\}_{j=k}^\infty$ is a subsequence of $\{J_k(j)\}_{j=1}^\infty$,
and so, the diagonal sequence $\{f_{J_j(j)}\}_{j=1}^\infty$, again denoted by $\{f_j\}$,
satisfies condition \eq{e:SP1}.

2. Let $Q$ be an at most countable dense subset of $T$. Note that any point $t\in T$,
which is not a limit point for $T$ (i.e., $T\cap(t-\delta,t+\delta)=\{t\}$ for some
$\delta>0$), belongs to~$Q$. Since, for any $k\in\Nb$, $\vfi_k\in\Mon(T;\Rb^+)$,
the set $Q_k\subset T$ of points of discontinuity of $\vfi_k$ is at most countable.
Setting $S=Q\cup\bigcup_{k=1}^\infty Q_k$, we find that $S$ is an at most countable
dense subset of $T$; moreover, if $S\ne T$, then every point $t\in T\setminus S$ is
a limit point for $T$ and 
  \begin{equation} \label{e:SP2}
\mbox{$\vfi_k$ is continuous on $T\setminus S$ for all $k\in\Nb$.}
  \end{equation}
Since $S\subset T$ is at most countable and $\{f_j(s):j\in\Nb\}$ is \rc\ in $M$ for all
$s\in S$, applying the Cantor diagonal procedure and passing to a subsequence of
$\{f_j(s)\}_{j=1}^\infty$ if necessary, with no loss of generality we may assume that,
for each $s\in S$, $f_j(s)$ converges in $M$ as $j\to\infty$ to a (unique) point
denoted by $f(s)\in M$ (so that $f:S\to M$).

If $S=T$, we turn to Step~4 below and complete the proof.

3. Now, assuming that $S\ne T$, we prove that $f_j(t)$ converges in $M$ as $j\to\infty$
for all $t\in T\setminus S$, as well. Let $t\in T\setminus S$ and $\eta>0$ be arbitrarily
fixed. Since $\vep_k\to0$ as $k\to\infty$ (cf.\ Step~1), we pick and fix
$k=k(\eta)\in\Nb$ such that $\vep_k\le\eta$. By \eq{e:SP2}, $\vfi_k$ is continuous
at $t$, and so, by the density of $S$ in $T$, there is $s=s(k,t)\in S$ such that
$|\vfi_k(t)-\vfi_k(s)|\le\eta$. From property \eq{e:SP1}, there is
$j^1=j^1(\eta,k,t,s)\in\Nb$ such that, for all $j\ge j^1$,
  \begin{equation} \label{e:SP3}
|V_{\vep_k}(f_j,T\cap(-\infty,t])\!-\!\vfi_k(t)|\!\le\!\eta\,\,\mbox{and}\,\,
|V_{\vep_k}(f_j,T\cap(-\infty,s])\!-\!\vfi_k(s)|\!\le\!\eta.
  \end{equation}
Assuming that $s<t$ (with no loss of generality) and applying Lemma~\ref{l:mor}
(where $T$ is replaced by $T\cap(-\infty,t]$, $T_1$---by $T\cap(-\infty,s]$,
and $T_2$---by $T\cap[s,t]$), we get
  \begin{align*}
V_{\vep_k}(f_j,T\cap[s,t])&\le V_{\vep_k}(f_j,T\cap(-\infty,t])-
  V_{\vep_k}(f_j,T\cap(-\infty,s])\\[3pt]
&\le|V_{\vep_k}(f_j,T\cap(-\infty,t])-\vfi_k(t)|+|\vfi_k(t)-\vfi_k(s)|\\[3pt]
&\qquad+|\vfi_k(s)-V_{\vep_k}(f_j,T\cap(-\infty,s])|\\[3pt]
&\le\eta+\eta+\eta=3\eta\quad\mbox{for all}\quad j\ge j^1.
  \end{align*}
By the definition of $V_{\vep_k}(f_j,T\cap[s,t])$, for each $j\ge j^1$, there is
$g_j\in\BV(T\cap[s,t];M)$ (also depending on $\eta$, $k$, $t$, and $s$) such that
  \begin{equation*}
d_{\infty,T\cap[s,t]}(f_j,g_j)\le\vep_k\quad\mbox{and}\quad
V(g_j,T\cap[s,t])\le V_{\vep_k}(f_j,T\cap[s,t])+\eta.
  \end{equation*}
These inequalities, \eq{e:10} and property (V.1) on p.~\pageref{p:V} yield,
for all $j\ge j^1$,
  \begin{align}
d(f_j(s),f_j(t))&\le d(g_j(s),g_j(t))+2d_{\infty,T\cap[s,t]}(f_j,g_j)\nonumber\\[3pt]
&\le V(g_j,T\cap[s,t])+2\vep_k\le(3\eta+\eta)+2\eta=6\eta.\label{e:SP4}
  \end{align}
Being convergent, the sequence $\{f_j(s)\}_{j=1}^\infty$ is Cauchy in $M$, and so,
there is a natural number $j^2=j^2(\eta,s)$ such that $d(f_j(s),f_{j'}(s))\le\eta$ for all
$j,j'\ge j^2$. Since the number $j^3=\max\{j^1,j^2\}$ depends only on $\eta$
(and $t$) and
  \begin{align*}
d(f_j(t),f_{j'}(t))&\le d(f_j(t),f_j(s))+d(f_j(s),f_{j'}(s))+d(f_{j'}(s),f_{j'}(t))\\[3pt]
&\le 6\eta+\eta+6\eta=13\eta\quad\mbox{for all}\quad j,j'\ge j^3,
  \end{align*}
the sequence $\{f_j(t)\}_{j=1}^\infty$ is Cauchy in $M$. Taking into account that the set
$\{f_j(t):j\in\Nb\}$ is \rc\ in $M$, we conclude that $f_j(t)$ converges in $M$ as
$j\to\infty$ to a (unique) point denoted by $f(t)\in M$ (so, $f:T\setminus S\to M$).

4. At the end of Steps 2 and 3, we have shown that the function
$f$ mapping $T=S\cup(T\setminus S)$ into $M$ is the \pw\ limit on $T$ of a subsequence
$\{f_{j_p}\}_{p=1}^\infty$ of the original sequence $\{f_j\}_{j=1}^\infty$. By virtue
of Lemma~\ref{l:71}(b) and assumption \eq{e:sp}, given $\vep>0$, we get
  \begin{align*}
|f(T)|&\le\liminf_{p\to\infty}|f_{j_p}(T)|
  \le\liminf_{p\to\infty}V_\vep(f_{j_p},T)+2\vep\\[3pt]
&\le\limsup_{j\to\infty}V_\vep(f_j,T)+2\vep<\infty,
  \end{align*}
and so, $f$ is a \emph{bounded\/} function on $T$, i.e., $f\in\Bd(T;M)$.

Now, we prove that $f$ is \emph{regulated\/} on $T$. Given $\tau\in T$, which is a
left limit point for $T$, let us show that $d(f(s),f(t))\to0$ as $T\ni s,t\to\tau-0$ (similar
arguments apply if $\tau'\in T$ is a right limit point for $T$). This is equivalent to showing
that for every $\eta>0$ there is $\delta=\delta(\eta)>0$ such that
$d(f(s),f(t))\le7\eta$ for all $s,t\in T\cap(\tau-\delta,\tau)$ with $s<t$.
Recall that the (finally) extracted subsequence of the original sequence $\{f_j\}$,
here again denoted by $\{f_j\}$, satisfies condition \eq{e:SP1} and $f_j\to f$ \pw\ on~$T$.

Let $\eta>0$ be arbitrarily fixed. Since $\vep_k\to0$, pick and fix $k=k(\eta)\in\Nb$
such that $\vep_k\le\eta$. Furthermore, since $\vfi_k\in\Mon(T;\Rb^+)$ and $\tau\in T$
is a left limit point of $T$, the left limit $\lim_{T\ni t\to\tau-0}\vfi_k(t)\in\Rb^+$ exists.
Hence, there is $\delta=\delta(\eta,k)>0$ such that $|\vfi_k(t)-\vfi_k(s)|\le\eta$ for
all $s,t\in T\cap(\tau-\delta,\tau)$. Now, let $s,t\in T\cap(\tau-\delta,\tau)$ be arbitrary.
By \eq{e:SP1}, there is $j^1=j^1(\eta,k,s,t)\in\Nb$ such that if $j\ge j^1$, the
inequalities \eq{e:SP3} hold. Arguing exactly the same way as between lines \eq{e:SP3}
and \eq{e:SP4}, we find that $d(f_j(s),f_j(t))\le6\eta$ for all $j\ge j^1$. Noting that
$f_j(s)\to f(s)$ and $f_j(t)\to f(t)$ in $M$ as $j\to\infty$, by the triangle inequality
for $d$, we have, as $j\to\infty$,
  \begin{equation*}
|d(f_j(s),f_j(t))-d(f(s),f(t))|\le d(f_j(s),f(s))+d(f_j(t),f(t))\to0.
  \end{equation*}
So, there is $j^2=j^2(\eta,s,t)\in\Nb$ such that $d(f(s),f(t))\le d(f_j(s),f_j(t))+\eta$
for all $j\ge j^2$. Thus, if $j\ge\max\{j^1,j^2\}$, we get
$d(f(s),f(t))\le6\eta+\eta=7\eta$.

5. Finally, assume that $(M,d)$ is a \emph{proper\/} metric space. Once again (as at the
beginning of Step~4) it is convenient to denote the \pw\ convergent subsequence of
$\{f_j\}$ by $\{f_{j_p}\}_{p=1}^\infty$. So, since $f_{j_p}\to f$ \pw\ on $T$ as
$p\to\infty$, we may apply Lemma~\ref{l:proper}(c) and assumption \eq{e:sp}
and get, for all $\vep>0$,
  \begin{equation*}
V_\vep(f,T)\le\liminf_{p\to\infty}V_\vep(f_{j_p},T)\le
\limsup_{j\to\infty}V_\vep(f,T)<\infty.
  \end{equation*}
This and \eq{e:supReg} (or Lemma~\ref{l:Regc} if $T=I$) also imply $f\in\Reg(T;M)$.

This completes the proof of Theorem~\ref{t:SP}.
\sq

A few remarks concerning Theorem~\ref{t:SP} are in order (see also Remarks
\ref{r:cHp} and \ref{r:neces}).

\begin{remark} \label{r:four2} \rm
If $(M,d)$ is a \emph{proper\/} metric space, then the assumption that
`$\{f_j\}\subset M^T$
is \pw\ \rc\ on $T$' in Theorem~\ref{t:SP} can be replaced by an (seemingly weaker,
but, actually) equivalent condition `$\{f_j\}\subset M^T$ and $\{f_j(t_0)\}$ is
\emph{eventually bounded\/} in $M$ for some $t_0\in T$' in the sense that there are
$J_0\in\Nb$ and a constant $C_0>0$ such that $d(f_j(t_0),f_{j'}(t_0))\le C_0$ for all
$j,j'\ge J_0$. In fact, fixing $\vep>0$, e.g., $\vep=1$, by Lemma~\ref{l:71}(b), we get
$|f_j(T)|\le V_1(f_j,T)+2$ for all $j\in\Nb$, and so, applying assumption \eq{e:sp},
  \begin{equation*}
\limsup_{j\to\infty}|f_j(T)|\le\limsup_{j\to\infty}V_1(f_j,T)+2<\infty.
  \end{equation*}
Hence, there are $J_1\in\Nb$ and a constant $C_1>0$ such that
$|f_j(T)|\le C_1$ for all $j\ge J_1$. By the triangle inequality for $d$,
given $t\in T$, we find, for all $j,j'\ge\max\{J_0,J_1\}$,
  \begin{align}
d(f_j(t),f_{j'}(t))&\le d(f_j(t),f_j(t_0))+d(f_j(t_0),f_{j'}(t_0))+d(f_{j'}(t_0),f_{j'}(t))
  \nonumber\\[3pt]
&\le|f_j(T)|+C_0+|f_{j'}(T)|\le C_1+C_0+C_1, \label{e:Cio}
  \end{align}
i.e., $\{f_j(t)\}$ is eventually bounded uniformly in $t\in T$. Thus, since $M$ is proper,
$\{f_j(t)\}$ is \rc\ in $M$ for all $t\in T$.
In the case under consideration, an alternative proof of Theorem~\ref{t:SP},
worth mentioning of, can be given (see Theorem~\ref{t:SPprop} and its proof).

However, for a general metric space $(M,d)$, the relative compactness of $\{f_j(t)\}$
at all points $t\in T$ cannot be replaced by their (closedness and) boundedness even
at a single point of $T$. To see this, let $T=I=[a,b]$ and $M=\ell^1\subset\Rb^\Nb$
be the (infinite-dimensional) Banach space of all summable sequences
$u=\{u_n\}_{n=1}^\infty\in\ell^1$ equipped with the norm
$\|u\|=\sum_{n=1}^\infty|u_n|<\infty$. If $j\in\Nb$, denote by
$e_j=\{u_n\}_{n=1}^\infty$ the unit vector from $\ell^1$ given by
$u_n=0$ if $n\ne j$ and $u_j=1$. Now, define the sequence $\{f_j\}\subset M^T$
by $f_j(a)=e_j$ and $f_j(t)=0$ if $a<t\le b$, $j\in\Nb$. We have: the set
$\{f_j(a)\}_{j=1}^\infty=\{e_j:j\in\Nb\}$ is closed and bounded in $M$,
$\{f_j(t)\}_{j=1}^\infty=\{0\}$ is compact in $M$ if $a<t\le b$, and
(cf. Example~\ref{ex:thr} and \eq{e:tab}), given $j\in\Nb$, $V_\vep(f_j,T)=1-2\vep$
if $0<\vep<1/2$, and $V_\vep(f_j,T)=0$ if $\vep\ge1/2$. Clearly, condition \eq{e:sp}
is satisfied for $\{f_j\}$, but no subsequence of $\{f_j\}$ converges in $M$ at the
point $t=a$.
\end{remark}

\begin{theorem} \label{t:SPprop}
Suppose $T\subset\Rb$, $(M,d)$ is a {\sl proper} metric space, and a sequence of
functions $\{f_j\}\subset M^T$ is such that $\{f_j(t_0)\}$ is eventually bounded in $M$
for some $t_0\in T$ and condition\/ \eq{e:sp} holds. Then, a subsequence of $\{f_j\}$
converges \pw\ on $T$ to a {\sl bounded} function $f\in M^T$ such that
$V_\vep(f,T)\le\limsup_{j\to\infty}V_\vep(f_j,T)$ for all $\vep>0$.
\end{theorem}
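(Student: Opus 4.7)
The plan is to reduce Theorem~\ref{t:SPprop} to Theorem~\ref{t:SP} by showing that, under these hypotheses, the sequence $\{f_j\}$ is automatically \pw\ \rc\ on $T$. This reduction is essentially carried out already in the first paragraph of Remark~\ref{r:four2}, and I would simply formalize it.

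First, I would convert the $\vep$-variation bound into a uniform eventual bound on the oscillations $|f_j(T)|$. Fix any $\vep>0$ (say $\vep=1$); by Lemma~\ref{l:71}(b),
\begin{equation*}
|f_j(T)|\le V_1(f_j,T)+2 \quad\text{for all } j\in\Nb,
\end{equation*}
and hypothesis \eq{e:sp} then supplies $J_1\in\Nb$ and a constant $C_1>0$ such that $|f_j(T)|\le C_1$ for all $j\ge J_1$. Combining this with the eventual boundedness at $t_0$ (with constants $J_0$ and $C_0$) and the triangle inequality exactly as in \eq{e:Cio}, we obtain
\begin{equation*}
d(f_j(t),f_{j'}(t))\le 2C_1+C_0 \quad\text{for all } t\in T \text{ and } j,j'\ge\max\{J_0,J_1\}.
\end{equation*}
Hence, for each $t\in T$, the tail $\{f_j(t)\}_{j\ge\max\{J_0,J_1\}}$ is bounded in $M$, so the full set $\{f_j(t):j\in\Nb\}$ is bounded. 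Since $(M,d)$ is proper, bounded sets have compact closure, so $\{f_j\}$ is \pw\ \rc\ on $T$.

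Having verified the two hypotheses of Theorem~\ref{t:SP} (\pw\ relative compactness on $T$ and \eq{e:sp}), I would simply invoke that theorem, together with its addendum in the proper case, to extract a subsequence $\{f_{j_p}\}_{p=1}^\infty$ converging pointwise on $T$ to a bounded (regulated) function $f\in M^T$ satisfying
\begin{equation*}
V_\vep(f,T)\le\limsup_{j\to\infty}V_\vep(f_j,T) \quad\text{for all } \vep>0,
\end{equation*}
which is exactly the desired conclusion. There is essentially no obstacle here; the only step worth noting is the application of Lemma~\ref{l:71}(b) to promote the single-point eventual boundedness to pointwise eventual boundedness via the uniform oscillation bound, after which properness of $M$ converts boundedness into relative compactness and Theorem~\ref{t:SP} finishes the argument.
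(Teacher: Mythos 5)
Your proof is correct, but it takes a genuinely different route from the paper's. You reduce Theorem~\ref{t:SPprop} to Theorem~\ref{t:SP} by upgrading the single-point eventual boundedness to \pw\ relative compactness---precisely the observation recorded in the first paragraph of Remark~\ref{r:four2}---and then invoke Theorem~\ref{t:SP} together with its addendum for proper $(M,d)$. Every step checks out: Lemma~\ref{l:71}(b) with $\vep=1$ and condition \eq{e:sp} give a uniform eventual bound on $|f_j(T)|$; the triangle inequality as in \eq{e:Cio} bounds each tail $\{f_j(t)\}_{j\ge\max\{J_0,J_1\}}$; adjoining the finitely many remaining terms keeps $\{f_j(t):j\in\Nb\}$ bounded; and properness converts boundedness into relative compactness. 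There is no circularity, since Theorem~\ref{t:SP} is proved independently of Theorem~\ref{t:SPprop}. The paper, by contrast, gives a self-contained direct proof: for a sequence $\vep_k\to0$ it picks $\BV$ approximants $g_j^{(k)}$ of $f_j$ within $\vep_k$ having uniformly bounded variations, applies the Helly-type principle (V.4) and a diagonal procedure to obtain pointwise limits $g^{(k)}\in\BV(T;M)$, shows $\{g^{(k)}\}$ is Cauchy in the uniform metric $d_{\infty,T}$, identifies the pointwise limit of a diagonal subsequence of $\{f_j\}$ with the uniform limit $g$, and finishes with Lemma~\ref{l:proper}(c). The paper explicitly flags this as an alternative argument worth recording (end of the first paragraph of Remark~\ref{r:four2}). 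Your reduction is shorter and immediately yields the extra information that $f$ is regulated; the paper's construction exhibits a different mechanism (approximation in $\BV(T;M)$ rather than Helly for the monotone $\vep$-variation functions) whose independent interest is the point of stating Theorem~\ref{t:SPprop} separately.
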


\proof
1. Let $\{\vep_k\}_{k=1}^\infty\subset(0,\infty)$ be such that $\vep_k\to0$ as
$k\to\infty$. Given $k\in\Nb$, condition \eq{e:sp} implies the existence of
$j_0'(\vep_k)\in\Nb$ and a constant $C(\vep_k)>0$ such that
$V_{\vep_k}(f_j,T)<C(\vep_k)$ for all $j\ge j_0'(\vep_k)$. By definition \eq{e:av},
for each $j\ge j_0'(\vep_k)$, there is $g_j^{(k)}\in\BV(T;M)$ such that%
\footnote{Conditions $\{f_j\}\subset M^T$ is \rc\ on $T$ and $\{g_j\}\subset M^T$ is
such that $d_{\infty,T}(f_j,g_j)\le\vep$ for all $j\in\Nb$ \emph{do not\/} imply in general
that $\{g_j\}$ is also \rc\ on $T$: e.g.\ (cf.\ notation in Remark~\ref{r:four2}), $T=[0,1]$,
$M=\ell^1$ (which is not proper), $f_j(t)=0$ and $g_j(t)=\vep t e_j$ for all $j\in\Nb$
and $t\in T$.}
  \begin{equation} \label{e:dnVC}
d_{\infty,T}(f_j,g_j^{(k)})\le\vep_k\quad\mbox{ and }\quad V(g_j^{(k)},T)\le C(\vep_k).
  \end{equation}
Since $\{f_j(t_0)\}$ is eventually bounded and \eq{e:sp} holds, we get inequality
\eq{e:Cio} for all $j,j'\ge\max\{J_0,J_1\}$. It follows that if $t\in T$, $k\in\Nb$, and
$j,j'\ge j_0(\vep_k)\equiv\max\{j_0'(\vep_k),J_0,J_1\}$, we find, by the triangle
inequality for $d$, \eq{e:dnVC},~and~\eq{e:Cio},
  \begin{align*}
d(g_j^{(k)}(t),g_{j'}^{(k)}(t))&\le d(g_j^{(k)}(t),f_j(t))+d(f_j(t),f_{j'}(t))
  +d(f_{j'}(t),g_{j'}^{(k)}(t))\\[2pt]
&\le d_{\infty,T}(g_j^{(k)},f_j)+d(f_j(t),f_{j'}(t))+d_{\infty,T}(f_{j'},g_{j'}^{(k)})\\[2pt]
&\le\vep_k+(C_0+2C_1)+\vep_k.
  \end{align*}
In this way, we have shown that
  \begin{equation} \label{e:djjk}
\sup_{j,j'\ge j_0(\vep_k)}d(g_j^{(k)}(t),g_{j'}^{(k)}(t))\le2\vep_k+C_0+2C_1\,\,\,
\mbox{for all $k\in\Nb$ and $t\in T$,}
  \end{equation}
and, by the second inequality in \eq{e:dnVC},
  \begin{equation} \label{e:Vgj}
\sup_{j\ge j_0(\vep_k)}V(g_j^{(k)},T)\le C(\vep_k)\quad\mbox{for \,all}\quad k\in\Nb.
  \end{equation}

2. Applying Cantor's diagonal procedure, let us show the following: given $k\in\Nb$, there
exist a subsequence of $\{g_j^{(k)}\}_{j=j_0(\vep_k)}^\infty$, denoted by
$\{g_j^{(k)}\}_{j=1}^\infty$, and $g^{(k)}\in\BV(T;M)$ such that
  \begin{equation} \label{e:gkkt}
\lim_{j\to\infty}d(g_j^{(k)}(t),g^{(k)}(t))=0\quad\mbox{for \,all}\quad t\in T.
  \end{equation}

Setting $k=1$ in  \eq{e:djjk} and \eq{e:Vgj}, we find that the sequence
$\{g_j^{(1)}\}_{j=j_0(\vep_1)}^\infty$ has uniformly bounded (by $C(\vep_1)$)
Jordan variations and is uniformly bounded on $T$ (by $2\vep_1+C_0+2C_1$), and so,
since $M$ is a \emph{proper\/} metric space, the sequence is \pw\ \rc\ on~$T$.
By the Helly-type \pw\ selection principle in $\BV(T;M)$ (cf.\ property (V.4) on
p.~\pageref{p:V}), there are a subsequence $\{J_1(j)\}_{j=1}^\infty$ of
$\{j\}_{j=j_0(\vep_1)}^\infty$ and a function $g^{(1)}\in\BV(T;M)$ such that
$g_{J_1(j)}^{(1)}(t)\to g^{(1)}(t)$ in $M$ as $j\to\infty$ for all $t\in T$.
Pick the least number $j_1\in\Nb$ such that $J_1(j_1)\ge j_0(\vep_2)$. Inductively,
if $k\in\Nb$ with $k\ge2$, a subsequence $\{J_{k-1}(j)\}_{j=1}^\infty$ of
$\{j\}_{j=j_0(\vep_1)}^\infty$, and the number $j_{k-1}\in\Nb$ such that
$J_{k-1}(j_{k-1})\ge j_0(\vep_k)$ are already chosen, we get the sequence of functions
$\{g_{J_{k-1}(j)}^{(k)}\}_{j=j_{k-1}}^\infty\subset\BV(T;M)$, which, by virtue of
\eq{e:djjk} and \eq{e:Vgj}, satisfies conditions
  \begin{equation*}
\sup_{j,j'\ge j_{k-1}}d\bigl(g_{J_{k-1}(j)}^{(k)}(t),g_{J_{k-1}(j')}^{(k)}(t)\bigr)\le
2\vep_k+C_0+2C_1\quad\mbox{for \,all}\quad t\in T
  \end{equation*}
and
  \begin{equation*}
\sup_{j\ge j_{k-1}}V\bigl(g_{J_{k-1}(j)}^{(k)},T\bigr)\le C(\vep_k).
  \end{equation*}
By Helly's-type selection principle (V.4) in $\BV(T;M)$, there are a subsequence
$\{J_k(j)\}_{j=1}^\infty$ of $\{J_{k-1}(j)\}_{j=j_{k-1}}^\infty$ and a function
$g^{(k)}\in\BV(T;M)$ such that $g_{J_k(j)}^{(k)}(t)\to g^{(k)}(t)$ in $M$ as
$j\to\infty$ for all $t\in T$. Since, for each $k\in\Nb$, $\{J_j(j)\}_{j=k}^\infty$ is a
subsequence of $\{J_k(j)\}_{j=j_{k-1}}^\infty\subset\{J_k(j)\}_{j=1}^\infty$,
we conclude that the diagonal sequence $\{g_{J_j(j)}^{(k)}\}_{j=1}^\infty$,
(which was) denoted by $\{g_j^{(k)}\}_{j=1}^\infty$ (at the beginning of step~2),
satisfies condition \eq{e:gkkt}.

We denote the corresponding diagonal subsequence $\{f_{J_j(j)}\}_{j=1}^\infty$ of
$\{f_j\}$ again by $\{f_j\}$.

3. Since $\BV(T;M)\subset\Bd(T;M)$ (by (V.1) on p.~\pageref{p:V}),
$\{g^{(k)}\}_{k=1}^\infty\subset\Bd(T;M)$. We are going to show that
$\{g^{(k)}\}_{k=1}^\infty$ is a Cauchy sequence with respect to the
uniform metric $d_{\infty,T}$. For this, we employ an idea from \cite[p.~49]{Fr}.

Let $\eta>0$ be arbitrary. From $\vep_k\to0$, we find $k_0=k_0(\eta)\in\Nb$ such that
$\vep_k\le\eta$ for all $k\ge k_0$. Now, suppose $k,k'\in\Nb$ be (arbitrary) such that
$k,k'\ge k_0$. By virtue of \eq{e:gkkt}, for each $t\in T$, there is a number
$j^1=j^1(t,\eta,k,k')\in\Nb$ such that if $j\ge j^1$, we have
  \begin{equation*}
d\bigl(g_j^{(k)}(t),g^{(k)}(t)\bigr)\le\eta\quad\mbox{ and }\quad
d\bigl(g_j^{(k')}(t),g^{(k')}(t)\bigr)\le\eta.
  \end{equation*}
Now, it follows from the triangle inequality for $d$ and the first inequality in
\eq{e:dnVC} that if $j\ge j^1$,
  \begin{align*}
d\bigl(g^{(k)}(t),g^{(k')}(t)\bigr)&\le d\bigl(g^{(k)}(t),g_j^{(k)}(t)\bigr)
 +d\bigl(g_j^{(k)}(t),f_j(t)\bigr)\\[2pt]
&\qquad+d\bigl(f_j(t),g_j^{(k')}(t)\bigr)+d\bigl(g_j^{(k')}(t),g^{(k')}(t)\bigr)\\[2pt]
&\le\eta+\vep_k+\vep_{k'}+\eta\le4\eta.
  \end{align*}
By the arbitrariness of $t\in T$, $d_{\infty,T}d(g^{(k)},g^{(k')})\le4\eta$ for all
$k,k'\ge k_0$.

4. Being proper, $(M,d)$ is complete, and so, $\Bd(T;M)$ is complete with respect to
the uniform metric $d_{\infty,T}$. By step~3, there is $g\in\Bd(T;M)$ such that
$g^{(k)}\rra g$ on $T$ (i.e., $d_{\infty,T}(g^{(k)},g)\to0$) as $k\to\infty$.
Let us prove that $f_j\to g$ pointwise on $T$ as $j\to\infty$ ($\{f_j\}$ being from
the end of step~2).

Let $t\in T$ and $\eta>0$ be arbitrary. Choose and fix a number $k=k(\eta)\in\Nb$
such that $\vep_k\le\eta$ and $d_{\infty,T}(g^{(k)},g)\le\eta$. By \eq{e:gkkt},
there is $j^2=j^2(t,\eta,k)\in\Nb$ such that $d(g_j^{(k)}(t),g^{(k)}(t))\le\eta$
for all $j\ge j^2$, and so, \eq{e:dnVC} implies
  \begin{align*}
d(f_j(t),g(t))&\le d\bigl(f_j(t),g_j^{(k)}(t)\bigr)+d\bigl(g_j^{(k)}(t),g^{(k)}(t)\bigr)
  +d\bigl(g^{(k)}(t),g(t)\bigr)\\[2pt]
&\le\vep_k+d\bigl(g_j^{(k)}(t),g^{(k)}(t)\bigr)+d_{\infty,T}(g^{(k)},g)\\[2pt]
&\le\eta+\eta+\eta=3\eta\quad\,\mbox{for \,all}\quad j\ge j^2,
  \end{align*}
which proves our assertion.

Thus, we have shown that a suitable (diagonal) subsequence $\{f_{j_p}\}_{p=1}^\infty$
of the original sequence $\{f_j\}_{j=1}^\infty$ converges pointwise on $T$ to the
function $g$ from $\Bd(T;M)$. Setting $f=g$ and applying Lemma~\ref{l:proper}(c),
we conclude that
  \begin{equation*}
V_\vep(f,T)=V_\vep(g,T)\le\liminf_{p\to\infty}V_\vep(f_{j_p},T)
\le\limsup_{j\to\infty}V_\vep(f_j,T)\quad\forall\,\vep>0.
  \end{equation*}

This completes the proof of Theorem~\ref{t:SPprop}.
\sq

A simple consequence of Theorem~\ref{t:SPprop} is the following

\begin{corollary}
Assume that assumption \eq{e:sp} in Theorem~{\rm\ref{t:SPprop}} is replaced by
condition $\lim_{j\to\infty}|f_j(T)|=0$. Then, a subsequence of $\{f_j\}$ converges
pointwise on $T$ to a constant function on $T$.
\end{corollary}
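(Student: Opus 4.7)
The plan is to deduce the corollary as a direct application of Theorem~\ref{t:SPprop}, after verifying that the new hypothesis $\lim_{j\to\infty}|f_j(T)|=0$ is strong enough to imply condition \eq{e:sp}, and then to identify the limit function as constant by showing its oscillation vanishes.

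First, I would observe that the assumption $|f_j(T)|\to0$ implies the strong conclusion that $\limsup_{j\to\infty}V_\vep(f_j,T)=0$ for every $\vep>0$. Indeed, given $\vep>0$, there is some $j_1=j_1(\vep)\in\Nb$ such that $|f_j(T)|\le\vep$ for all $j\ge j_1$, and then \eq{e:zero} yields $V_\vep(f_j,T)=0$ for every such $j$. In particular, condition \eq{e:sp} holds (with the $\limsup$ equal to zero), and the hypothesis that $\{f_j(t_0)\}$ is eventually bounded at some $t_0\in T$ is inherited from the statement of Theorem~\ref{t:SPprop}.

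Next I would invoke Theorem~\ref{t:SPprop} itself to extract a subsequence $\{f_{j_p}\}_{p=1}^\infty$ that converges \pw\ on $T$ to a bounded function $f\in M^T$, with the additional information
\begin{equation*}
V_\vep(f,T)\le\limsup_{j\to\infty}V_\vep(f_j,T)=0\quad\mbox{for \,all}\quad\vep>0.
\end{equation*}
Lemma~\ref{l:71}(e) then immediately gives $|f(T)|=0$, i.e., $f$ is constant on~$T$.

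Alternatively, and perhaps more transparently, one can bypass the $\vep$-variation bookkeeping at the final step: for any $s,t\in T$, the pointwise convergence $f_{j_p}(s)\to f(s)$, $f_{j_p}(t)\to f(t)$ together with $d(f_{j_p}(s),f_{j_p}(t))\le|f_{j_p}(T)|\to0$ gives $d(f(s),f(t))=0$, so $f$ is constant. There is no real obstacle here; the only substantive content is the trivial implication \eq{e:zero}, which converts the control on oscillations into control on $\vep$-variations, after which both the existence of the pointwise limit and the constancy of the limit are immediate consequences of Theorem~\ref{t:SPprop}.
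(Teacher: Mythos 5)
Your proposal is correct and follows essentially the same route as the paper: deduce from \eq{e:zero} that $\limsup_{j\to\infty}V_\vep(f_j,T)=0$ for every $\vep>0$, apply Theorem~\ref{t:SPprop}, and conclude constancy of the limit via Lemma~\ref{l:71}(e). Your alternative closing argument (passing to the limit in $d(f_{j_p}(s),f_{j_p}(t))\le|f_{j_p}(T)|$) is a valid minor shortcut but does not change the substance of the proof.
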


\proof
In fact, given $\vep>0$, there is $j_0=j_0(\vep)\in\Nb$ such that $|f_j(T)|\le\vep$
for all $j\ge j_0$, and so, by \eq{e:zero}, $V_\vep(f_j,T)=0$ for all $j\ge j_0$.
This implies
  \begin{equation*}
\limsup_{j\to\infty}V_\vep(f_j,T)\le\sup_{j\ge j_0}V_\vep(f_j,T)=0\quad
\mbox{for \,all}\quad\vep>0.
  \end{equation*}
By Theorem~\ref{t:SPprop}, a subsequence of $\{f_j\}$ converges \pw\ on $T$ to
a function $f\in M^T$ such that $V_\vep(f,T)=0$ for all $\vep>0$. Lemma~\ref{l:71}(e)
yields $|f(T)|=0$, i.e., $f$ is a constant function on $T$.
\sq

\begin{remark} \label{r:cHp} \rm
The classical Helly selection principle for monotone functions (p.~\pageref{p:Hellym})
is a particular case of Theorem~\ref{t:SP}. In fact, suppose $\{\vfi_j\}\subset\Rb^T$
is a sequence of monotone functions, for which there is a constant $C>0$ such that
$|\vfi_j(t)|\le C$ for all $t\in T$ and $j\in\Nb$. Setting $(M,\|\cdot\|)=(\Rb,|\cdot|)$,
$x=1$ and $y=0$ in Example~\ref{ex:1}, for every $j\in\Nb$
 we find, from \eq{e:mntn} and \eq{e:ov2},
that $V_\vep(\vfi_j,T)=|\vfi_j(T)|-2\vep$ if $0<\vep<\frac12|\vfi_j(T)|$ and
$V_\vep(\vfi_j,T)=0$ if $\vep\ge\frac12|\vfi_j(T)|$. Since $|\vfi_j(T)|\le2C$, we get
$V_\vep(\vfi_j,T)\le2C$ for all $j\in\Nb$ and $\vep>0$, and so, \eq{e:sp} is satisfied.

Similarly, Theorem~\ref{t:SP} implies Helly's selection principle for functions of bounded
variation (cf.\ property (V.4) on p.~\pageref{p:V}). In fact, if $\{f_j\}\subset M^T$
and $C=\sup_{j\in\Nb}V(f_j,T)$ is finite, then, by Lemma~\ref{l:71}(a),
$V_\vep(f_j,T)\le C$ for all $j\in\Nb$ and $\vep>0$, and so, \eq{e:sp} is fulfilled.
Now, if a subsequence of $\{f_j\}$ converges pointwise on $T$ to $f\in M^T$, then
property (V.3) (p.~\pageref{p:V}) implies $f\in\BV(T;M)$ with $V(f,T)\le C$.
\end{remark}

\begin{remark} \label{r:neces} \rm
(a) Condition \eq{e:sp} is \emph{necessary\/} for the \emph{uniform convergence\/}
in the following sense: if $\{f_j\}\subset M^T$, $f_j\rra f$ on $T$ and
$V_\vep(f,T)<\infty$ for all $\vep>0$, then, by Lemma~\ref{l:uc}(a),
  \begin{equation*}
\limsup_{j\to\infty}V_\vep(f_j,T)\le V_{\vep-0}(f,T)\le V_{\vep'}(f,T)<\infty\quad
\mbox{for all}\quad 0<\vep'<\vep.
  \end{equation*}

(b) Contrary to this, \eq{e:sp} \emph{is not\/} necessary for the \pw\
convergence (see Examples~\ref{ex:notnec} and \ref{ex:poico}).
 On the other hand, condition \eq{e:sp} is
`almost necessary' for the \pw\ convergence $f_j\to f$ on $T$ in the following sense.
Assume that $T\subset\Rb$ is a measurable set with \emph{finite\/} Lebesgue
measure $\mathcal{L}(T)$ and $\{f_j\}\subset M^T$ is a sequence of measurable
functions such that $f_j\to f$ on $T$ (or even $f_j$ converges almost everywhere
on $T$ to~$f$) and $V_\vep(f,T)<\infty$ for all $\vep>0$. By Egorov's Theorem
(e.g., \cite[Section~3.2.7]{Rao}), given $\eta>0$, there is a measurable set
$T_\eta\subset T$ such that $\mathcal{L}(T\setminus T_\eta)\le\eta$ and
$f_j\rra f$ on $T_\eta$. By (a) above and Lemma~\ref{l:ele}(b), we have
  \begin{equation*}
\limsup_{j\to\infty}V_\vep(f_j,T_\eta)\le V_{\vep'}(f,T_\eta)\le V_{\vep'}(f,T)<\infty
\quad\mbox{for all}\quad 0<\vep'<\vep.
  \end{equation*}
\end{remark}

\section{Examples illustrating Theorem~\protect\ref{t:SP}} \label{ss:illSP}

\begin{example} \label{ex:sinejt}
The main assumption \eq{e:sp} in Theorem~\ref{t:SP} is essential. In fact, it is
well known that the sequence of functions $\{f_j\}\subset\Rb^T$ on the interval
$T=[0,2\pi]$ defined by $f_j(t)=\sin(jt)$, $0\le t\le2\pi$, has no subsequence convergent
at all points of $T$ (cf.\ \cite[Example~3]{JMAA05}; more explicitly this is revived in
Remark~\ref{r:sinjt} below). Let us show that $\{f_j\}$ does not satisfy condition~%
\eq{e:sp}.

Let us fix $j\in\Nb$. First, note that, given $t,s\in[0,2\pi]$, we have $\sin(jt)=0$ if and
only if $t=t_k=k\pi/j$, $k=0,1,2,\dots,2j$, and $|\sin(js)|=1$ if and only if
$s=s_k=\frac12(t_{k-1}+t_k)=(k-\frac12)\pi/j$, $k=1,2,\dots,2j$. Setting
$I_k=[t_{k-1},s_k]$ and $I_k'=[s_k,t_k]$, we find
  \begin{equation*}
T=[0,2\pi]=\bigcup_{k=1}^{2j}[t_{k-1},t_k]=\bigcup_{k=1}^{2j}(I_k\cup I_k')\quad
\mbox{(non-overlapping intervals),}
  \end{equation*}
and $f_j$ is strictly monotone on each interval $I_k$ and $I_k'$, $k=1,2\dots,2j$.
By virtue of Lemma~\ref{l:mor}, given $\vep>0$, we have
  \begin{equation} \label{e:sut}
\sum_{k=1}^{2j}\!\bigl(V_\vep(f_j,I_k)+V_\vep(f_j,I_k')\bigr)\le V_\vep(f_j,T)
\le\sum_{k=1}^{2j}\!\bigl(V_\vep(f_j,I_k)+V_\vep(f_j,I_k')\bigr)+(4j-1)2\vep.
  \end{equation}
It suffices to calculate $V_\vep(f_j,I_k)$ for $k=1$, where $I_1=[t_0,s_1]=[0,\pi/2j]$
(the other $\vep$-variations in \eq{e:sut} are calculated similarly and give the same value).
Since $f_j$ is strictly increasing on $I_1$, $f_j(I_1)=[0,1]$ and $|f_j(I_1)|=1$,
\eq{e:mntn} and \eq{e:ov2} imply $V_\vep(f_j,I_1)=1-2\vep$ if $0<\vep<\frac12$
(and $V_\vep(f_j,I_1)=0$ if $\vep\ge\frac12$). Hence, $V_\vep(f_j,I_k)=%
V_\vep(f_j,I_k')=1-2\vep$ for all $0<\vep<\frac12$ and $k=1,2,\dots,2j$, and
it follows from \eq{e:sut} that
  \begin{equation*}
4j(1-2\vep)\le V_\vep(f_j,T)\le4j(1-2\vep)+(4j-1)2\vep=4j-2\vep,
\quad 0<\vep<\textstyle\frac12.
  \end{equation*}
Thus, condition \eq{e:sp} is not satisfied by $\{f_j\}$.
\end{example}

\begin{remark} \label{r:sinjt} \rm
Since the sequence of functions $\{f_j\}_{j=1}^\infty$ from Example~\ref{ex:sinejt},
i.e., $f_j(t)=\sin(jt)$ for $t\in[0,2\pi]$, plays a certain role in the sequel as well, for the
sake of completeness, we recall here the proof of the fact (e.g.,
\cite[Chapter~7, Example~7.20]{Rudin}) that no subsequence of
$\{\sin(jt)\}_{j=1}^\infty$
converges in $\Rb$ for all $t\in[0,2\pi]$; note that $\{f_j\}$ is a uniformly bounded
sequence of continuous functions on the compact set $[0,2\pi]$. On the contrary,
assume that there is an increasing sequence $\{j_p\}_{p=1}^\infty\subset\Nb$
such that $\sin(j_pt)$ converges as $p\to\infty$ for all $t\in[0,2\pi]$. Given $t\in[0,2\pi]$,
this implies $\sin(j_pt)-\sin(j_{p+1}t)\to0$, and so, $(\sin(j_pt)-\sin(j_{p+1}t))^2\to0$
as $p\to\infty$. By Lebesgue's dominated convergence theorem,
$I_p\equiv\int_0^{2\pi}(\sin(j_pt)-\sin(j_{p+1}t))^2dt\to0$ as $p\to\infty$.
However, a straightforward computation of the integral $I_p$ (note that $j_p\!<\!j_{p+1}$)
gives the value $I_p=2\pi$ for all $p\in\Nb$, which is a contradiction.

More precisely (cf.~\cite[Chapter~10, Exercise~16]{Rudin}), the set $E\subset[0,2\pi]$ of
all points $t\in[0,2\pi]$, for which $\sin(j_pt)$ converges as $p\to\infty$ (with
$\{j_p\}_{p=1}^\infty$ as above), is of Lebesgue measure zero, $\mathcal{L}(E)=0$.
To see this, it suffices to note that, for a measurable set $A\subset E$,
$\int_A\sin(j_pt)dt\to0$ and
  \begin{equation*}
\int_A(\sin(j_pt))^2dt=\frac12\int_A(1-\cos(2j_pt))dt\to\frac12\mathcal{L}(A)\quad
\mbox{as}\quad p\to\infty.
  \end{equation*}

To illustrate the assertion in the previous paragraph, let us show that, given $t\in\Rb$,
$\sin(jt)$ converges as $j\to\infty$ if and only if $\sin t=0$ (i.e., $t=\pi k$ for some
integer~$k$). Since the sufficiency is clear, we prove the necessity. Suppose $t\in\Rb$
and the limit $\phi(t)=\lim_{j\to\infty}\sin(jt)$ exists in~$\Rb$. To show that $\sin t=0$,
we suppose, by contadiction, that $\sin t\ne0$. Passing to the limit as $j\to\infty$ in $\sin(j+2)t+\sin(jt)=2\sin(j+1)t\cdot\cos t$, we get
$\phi(t)+\phi(t)=2\phi(t)\cos t$, which is equivalent to $\phi(t)=0$ or $\cos t=1$.
Since $\sin(j+1)t=\sin(jt)\cdot\cos t+\sin t\cdot\cos(jt)$, we find
  \begin{equation*}
\cos(jt)=\frac{\sin(j+1)t-\sin(jt)\cdot\cos t}{\sin t},
  \end{equation*}
and so, $\lim_{j\to\infty}\cos(jt)=\phi(t)(1-\cos t)/\sin t$. Hence
  \begin{align*}
1&=\lim_{j\to\infty}\bigl(\sin^2(jt)+\cos^2(jt)\bigr)=
(\phi(t))^2+(\phi(t))^2\cdot\biggl(\frac{1\!-\!\cos t}{\sin t}\biggr)^{\!2}\\
&\qquad\qquad\qquad=(\phi(t))^2\cdot\frac{2(1\!-\!\cos t)}{\sin^2t},
  \end{align*}
and so, $\phi(t)\ne0$ and $1\ne\cos t$, which is a contradition. (In a similar manner,
one may show that, given $t\in\Rb$, $\cos(jt)$ converges as $j\to\infty$ if and only if
$\cos t=1$, i.e., $t=2\pi k$ for some integer~$k$; see \cite[p.~233]{Bridger}).

Returning to the convergence set $E\subset[0,2\pi]$ of the sequence
$\{\sin(j_pt)\}_{p=1}^\infty$, as a consequence of the previous assertion, we find that
if $j_p=p$, then $E=\{0,\pi,2\pi\}$. In general, the set $E$ may be
`quite large': for instance, if $j_p=p!$, then $E=\pi\cdot(\Qb\cap[0,2])$, which is
countable and dense in~$[0,2\pi]$.
\end{remark}

\begin{example} \label{ex:notnec} \rm
That condition \eq{e:sp} in Theorem~\ref{t:SP} is \emph{not necessary\/} for the
pointwise convergence $f_j\to f$ can be illustrated by the sequence $\{f_j\}$ from
Example~\ref{ex:ucbw}, where $I=[a,b]=[0,1]$. We assert that if
$0<\vep<\frac12d(x,y)$, then $\lim_{j\to\infty}V_\vep(f_j,I)=\infty$. To see this,
given $j\in\Nb$, we consider a partition of $I$ as defined in \eq{e:partj}. Supposing
that  $g\in M^I$ is arbitrary such that $d_{\infty,I}(f_j,g)\le\vep$, we find, by virtue
of \eq{e:10},
  \begin{equation*}
V(g,I)\ge\sum_{k=1}^{j!}d(g(t_k),g(s_k))\ge
\sum_{k=1}^{j!}\bigl(d(f(t_k),f(s_k))\!-\!2\vep\bigr)\!=\!j!\bigl(d(x,y)\!-\!2\vep\bigr).
  \end{equation*}
By definition \eq{e:av}, $V_\vep(f_j,I)\ge j!(d(x,y)-2\vep)$, which proves our assertion.
\end{example}

\begin{example} \label{ex:equim} \rm
The choice of an appropriate (equivalent) metric on $M$ is essential in Theorem~\ref{t:SP}.
(Recall that two metrics $d$ and $d'$ on $M$ are \emph{equivalent\/} if, given
a sequence $\{x_j\}\subset M$ and $x\in M$, conditions $d(x_j,x)\to0$ and
$d'(x_j,x)\to0$ are equivalent.)

Let $d$ be an unbounded metric on $M$, i.e., $\sup_{x,y\in M}d(x,y)=\infty$ (for
instance, given $N\in\Nb$ and $q\ge1$,
 $M=\Rb^N$ and $d(x,y)=\|x-y\|$, where $x=(x_1,\dots,x_N)$,
$y=(y_1,\dots,y_N)\in\Rb^N$ and $\|x\|=\bigl(\sum_{i=1}^N|x_i|^q\bigr)^{1/q}$).
The unboundedness of $d$ is equivalent to $\sup_{x\in M}d(x,y)=\infty$ for all
$y\in M$, so let us fix $y_0\in M$ and pick $\{x_j\}\subset M$ such that
$d(x_j,y_0)\to\infty$ as $j\to\infty$ (e.g., in the case $M=\Rb^N$ we may set
$x_j=(j,j,\dots,j)$ and $y_0=(0,0,\dots,0)$). Given a sequence $\{\tau_j\}\subset(a,b)%
\subset I=[a,b]$ such that $\tau_j\to a$ as $j\to\infty$, define $\{f_j\}\subset M^I$ by
(cf.\ Example~\ref{ex:thr})
  \begin{equation*}
\mbox{$f_j(\tau_j)=x_j$ \,\,and \,\,$f_j(t)=y_0$ \,if\, 
$t\in I\setminus\{\tau_j\}$, $j\in\Nb$.}
  \end{equation*}
Clearly, $f_j\to c(t)\equiv y_0$ pointwise on $I$, and so, $\{f_j\}$ is \pw\ \rc\ on $I$
(this can be seen directly by noting that the set $\{f_j(t):j\in\Nb\}$ is equal to
$\{x_k:\mbox{$k\in\Nb$ and $\tau_k=t$}\}\cup\{y_0\}$, which is finite for all $t\in I$).
Given $\vep>0$, there is $j_0=j_0(\vep)\in\Nb$ such that $|f_j(I)|=d(x_j,y_0)>2\vep$
for all $j\ge j_0$, and so, by Lemma~\ref{l:71}(f),
  \begin{equation*}
V_\vep(f_j,I)\ge|f_j(I)|-2\vep=d(x_j,y_0)-2\vep\quad\mbox{for all}\quad j\ge j_0.
  \end{equation*}
Since $\lim_{j\to\infty}d(x_j,y_0)=\infty$, this implies
$\lim_{j\to\infty}V_\vep(f_j,I)=\infty$, and so, Theorem~\ref{t:SP} is inapplicable
in this context.

On the other hand, the metric $d'$ on $M$, given by $d'(x,y)=\frac{d(x,y)}{1+d(x,y)}$,
$x,y\in M$, is equivalent to~$d$. Let us denote by $V'(f_j,I)$ and $V'_\vep(f_j,I)$ the
variation and the $\vep$-variation of $f_j$ on $I$ with respect to metric $d'$, respectively.
The variation $V'(f_j,I)$ is equal to (by virtue of the additivity of $V'$)
  \begin{equation*}
V'(f_j,I)\!=\!V'(f_j,[a,\tau_j])+V'(f_j,[\tau_j,b])\!=\!d'(x_j,y_0)+d'(x_j,y_0)\!=\!
2\,\frac{d(x_j,y_0)}{1+d(x_j,y_0)}.
  \end{equation*}
Now, if $\vep>0$, by Lemma~\ref{l:71}(a), $V_\vep'(f_j,I)\le V'(f_j,I)$ for all $j\in\Nb$,
and so,
  \begin{equation*}
\limsup_{j\to\infty}V_\vep'(f_j,I)\le\lim_{j\to\infty}V'(f_j,I)=2.
  \end{equation*}
Thus, the main assumption \eq{e:sp} in Theorem~\ref{t:SP} is satisfied, and this
Theorem is applicable to the sequence $\{f_j\}$.

Another interpretation of this example is that the main condition \eq{e:sp} is
\emph{not invariant\/} under equivalent metrics on $M$.
\end{example}

\begin{example} \label{ex:poico} \rm
Here we show that condition \eq{e:sp} in Theorem~\ref{t:SP} is \emph{not necessary\/}
for the pointwise convergence $f_j\to f$ on $T$, although we have $V_\vep(f_j,T)<\infty$
and $V_\vep(f,T)<\infty$ for all $\vep>0$. Furthermore, condition \eq{e:sp} may not
hold with respect to \emph{any\/} (equivalent) metric on $M$ such that $d(f_j(t),f(t))\to0$
as $j\to\infty$ for all $t\in T$. In fact, let $T=[0,2\pi]$ and $M=\Rb$, and define
$\{f_j\}\subset M^T$ by (cf.\ \cite[Example~4]{JMAA05}): $f_j(t)=\sin(j^2t)$ if
$0\le t\le2\pi/j$ and $f_j(t)=0$ if $2\pi/j<t\le2\pi$, $j\in\Nb$. Clearly, $\{f_j\}$ converges
\pw\ on $T$ to the function $f\equiv0$ with respect to \emph{any\/} metric $d$ on $M$,
which is equivalent to the usual metric $(x,y)\mapsto|x-y|$ on $\Rb$. Since $f_j$ is
continuous on $T=[0,2\pi]$ with respect to metric $|x-y|$, and so, with respect to $d$,
we find, by Lemma~\ref{l:Regc}, $V_\vep(f_j,T)<\infty$ and $V_\vep(f,T)=0$ with
respect to $d$ for all $j\in\Nb$ and $\vep>0$. Now, given $j,n\in\Nb$, we set
  \begin{equation*}
s_{j,n}=\frac1{j^2}\Bigl(2\pi n-\frac{3\pi}2\Bigr)\quad\mbox{and}\quad
t_{j,n}=\frac1{j^2}\Bigl(2\pi n-\frac\pi2\Bigr),
  \end{equation*}
so that $f_j(s_{j,n})=1$ and $f_j(t_{j,n})=-1$. Note also that
  \begin{equation*}
0<s_{j,1}<t_{j,1}<s_{j,2}<t_{j,2}<\dots<s_{j,j}<t_{j,j}<2\pi/j\quad
\mbox{for all}\quad j\in\Nb.
  \end{equation*}
Let $0<\vep<\frac12d(1,-1)$. Given $j\in\Nb$, suppose $g\!\in\! M^T$ is arbitrary
such that \mbox{$d_{\infty,T}(f_j,g)\!\le\!\vep$}.
The definition of $V(g,T)$ and \eq{e:10} give
  \begin{align*}
V(g,T)&\ge\sum_{n=1}^jd(g(s_{j,n}),g(t_{j,n}))\ge
\sum_{n=1}^j\bigl(d(f_j(s_{j,n}),f_j(t_{j,n}))-2\vep\bigr)\\
&=\bigl(d(1,-1)-2\vep\bigr)j.
  \end{align*}
By the arbitrariness of $g$ as above and \eq{e:av}, $V_\vep(f_j,T)\ge(d(1,-1)-2\vep)j$,
and so, condition \eq{e:sp} is not fulfilled for $0<\vep<\frac12d(1,-1)$.
\end{example}

\begin{example} \label{ex:irreg} \rm
(a) Theorem~\ref{t:SP} is inapplicable to the sequence $\{f_j\}$ from Example~%
\ref{ex:rieq}, because (although $f_j\rra f=\Dc_{x,y}$ on $I$)
$\lim_{j\to\infty}V_\vep(f_j,I)=\infty$ for $\vep=\frac12\|x-y\|$. The reason is that
the limit function $\Dc_{x,y}$ is not regulated (if $x\ne y$). However, see Remark~%
\ref{r:neces}(a) if the limit function is regulated.

(b) Nevertheless, Theorem~\ref{t:SP} can be successfully applied to sequences of
\emph{nonregulated\/} functions. To see this, we again use the context of
Example~\ref{ex:rieq}, where we suppose $x=y\in M$, so that $f(t)=\Dc_{x,x}(t)=x$,
$t\in I$. Recall also that we have $f_j=\Dc_{x_j,y_j}$ with $x_j\ne y_j$, $j\in\Nb$,
$x_j\to x$ and $y_j\to y=x$ in $M$, and $f_j\rra f\equiv x$ on $I$. Given $\vep>0$,
there is $j_0=j_0(\vep)\in\Nb$ such that $\|x_j-y_j\|\le2\vep$ for all $j\ge j_0$, which
implies, by virtue of \eq{e:steen}, $V_\vep(f_j,I)=0$ for all $j\ge j_0$. This yields
condition \eq{e:sp}:
  \begin{equation*}
\limsup_{j\to\infty}V_\vep(f_j,I)\le\sup_{j\ge j_0}V_\vep(f_j,I)=0
  \end{equation*}
(cf.\ also \cite[Example~3]{Studia17}).

On the other hand, for a fixed $k\in\Nb$ and $0<\vep<\frac12\|x_k-y_k\|$, we have,
from \eq{e:steen}, $V_\vep(f_k,I)=\infty$, and so, $\sup_{j\in\Nb}V_\vep(f_j,I)\ge%
V_\vep(f_k,I)=\infty$. Thus, condition of uniform boundedness of $\vep$-variations
$\sup_{j\in\Nb}V_\vep(f_j,I)<\infty$, which was assumed in \cite[Theorem~3.8]{Fr},
is more restrictive than condition~\eq{e:sp}.
\end{example}

\section{Two extensions of Theorem~\protect\ref{t:SP}}

Applying Theorem~\ref{t:SP} and the diagonal procedure over expanding intervals,
we get the following \emph{local\/} version of Theorem~\ref{t:SP}.

\begin{theorem} \label{t:SPloc}
If $T\subset\Rb$, $(M,d)$ is a metric space and $\{f_j\}\subset M^T$ is a \pw\ \rc\
sequence of functions such that
  \begin{equation*}
\mbox{$\D\limsup_{j\to\infty}V_\vep(f_j,T\cap[a,b])<\infty$ \,for all \,$a,b\in T$,
$a\le b$, and $\vep>0$,}
  \end{equation*}
then a subsequence of $\{f_j\}$ converges pointwise on $T$ to a {\sl regulated} function
$f\!\in\!\Reg(T;M)\!$ such that $f$ is {\sl bounded} on $T\cap[a,b]$ for all $a,b\in T$,
$a\le b$.
\end{theorem}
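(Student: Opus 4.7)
The plan is to exhaust $T$ by an increasing sequence of sets of the form $T_n = T\cap[a_n,b_n]$ with $a_n,b_n\in T$, apply Theorem~\ref{t:SP} on each $T_n$, and then patch together the extracted subsequences by Cantor's diagonal procedure.

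First, I would construct $\{a_n\}, \{b_n\}\subset T$. Fix any $t_0\in T$; choose a nonincreasing sequence $\{a_n\}\subset T$ with $a_n\le t_0$ such that $a_n\to\inf T$ in $[-\infty,t_0]$, and a nondecreasing sequence $\{b_n\}\subset T$ with $b_n\ge t_0$ such that $b_n\to\sup T$ in $[t_0,+\infty]$ (such sequences exist by the definition of $\inf$ and $\sup$). Setting $T_n=T\cap[a_n,b_n]$, one checks $T_1\subset T_2\subset\cdots$ and $\bigcup_{n=1}^\infty T_n=T$: any $t\in T$ with $t>\inf T$ eventually satisfies $a_n<t$, and similarly at the right end, while if $\inf T\in T$ or $\sup T\in T$ those endpoints belong to some $T_n$ by construction.

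Next, fix $n\in\Nb$. Since $a_n,b_n\in T$, the hypothesis of the theorem gives $\limsup_{j\to\infty}V_\vep(f_j,T_n)<\infty$ for every $\vep>0$, and the sequence $\{f_j|_{T_n}\}$ is \pw\ \rc\ on $T_n$ because $\{f_j\}$ is \pw\ \rc\ on $T$. Thus Theorem~\ref{t:SP} applies and yields a subsequence of $\{f_j\}$ converging \pw\ on $T_n$ to a bounded regulated function in $M^{T_n}$. Iterating and passing to sub-subsequences, let $\{f_j^{(1)}\}\subset\{f_j\}$ converge \pw\ on $T_1$, $\{f_j^{(2)}\}\subset\{f_j^{(1)}\}$ converge \pw\ on $T_2$, and so on. The diagonal sequence $\{f_{j_p}\}_{p=1}^\infty$ given by $f_{j_p}=f_p^{(p)}$ is eventually a subsequence of each $\{f_j^{(n)}\}$, hence converges \pw\ on every $T_n$, and therefore on $T=\bigcup_n T_n$, to a function $f\in M^T$.

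Finally I would verify the two properties of $f$. Boundedness on any $T\cap[a,b]$: choose $n$ large enough so that $a_n\le a$ and $b_n\ge b$, whence $T\cap[a,b]\subset T_n$, and the Theorem~\ref{t:SP} output on $T_n$ tells us $f|_{T_n}$ is bounded. Regulatedness on $T$: if $\tau\in T$ is a left limit point of $T$, take $n$ so large that $a_n<\tau\le b_n$; then for small $\delta>0$ the set $T\cap(\tau-\delta,\tau)$ lies inside $T_n$, so $\tau$ is a left limit point of $T_n$ as well, and the Cauchy condition $d(f(s),f(t))\to 0$ as $T\ni s,t\to\tau-0$ follows from the fact that $f|_{T_n}\in\Reg(T_n;M)$. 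The argument for right limit points is symmetric, so $f\in\Reg(T;M)$ as required.

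The only genuinely delicate point is the construction of the exhaustion $\{T_n\}$ when $\inf T$ or $\sup T$ fails to lie in $T$; once one knows that every $t\in T$ with $t>\inf T$ has $a_n<t$ for some $n$ (and likewise at the right end), the rest is a routine diagonal argument, and regulatedness at each limit point is inherited locally from the appropriate $T_n$.
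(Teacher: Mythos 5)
Your proposal is correct and follows essentially the same route as the paper: exhaust $T$ by sets $T\cap[a_n,b_n]$ with $a_n,b_n\in T$, apply Theorem~\ref{t:SP} on each, extract nested subsequences, and pass to the diagonal sequence. Your explicit verification of regulatedness and local boundedness of the limit (and your care with the cases $\inf T\in T$, $\sup T\in T$) only spells out what the paper leaves implicit, so no further comment is needed.
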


\proof
With no loss of generality, we may assume that sequences $\{a_k\}$ and $\{b_k\}$
from $T$ are such that $a_{k+1}\!<\!a_k\!<\!b_k\!<\!b_{k+1}$ for all $k\!\in\!\Nb$,
$a_k\to\inf T\notin T$ and $b_k\to\sup T\notin T$ as $k\to\infty$.
By Theorem~\ref{t:SP}, applied to $\{f_j\}$ on $T\cap[a_1,b_1]$, there is a subsequence
$\{J_1(j)\}_{j=1}^\infty$ of $\{J_0(j)\}_{j=1}^\infty=\{j\}_{j=1}^\infty$ such that
$\{f_{J_1(j)}\}_{j=1}^\infty$ converges \pw\ on $T\cap[a_1,b_1]$ to a bounded
regulated function $f_1':T\cap[a_1,b_1]\to M$. Since
  \begin{equation*}
\limsup_{j\to\infty}V_\vep(f_{J_1(j)},T\cap[a_2,b_2])\le
\limsup_{j\to\infty}V_\vep(f_j,T\cap[a_2,b_2])\!<\!\infty\,\,\,\mbox{for all}\,\,\,\vep>0,
  \end{equation*}
applying Theorem~\ref{t:SP} to $\{f_{J_1(j)}\}_{j=1}^\infty$ on $T\cap[a_2,b_2]$,
we find a subsequence $\{J_2(j)\}_{j=1}^\infty$ of $\{J_1(j)\}_{j=1}^\infty$ such that
$\{f_{J_2(j)}\}_{j=1}^\infty$ converges \pw\ on the set $T\cap[a_2,b_2]$ to a bounded
regulated function $f_2':T\cap[a_2,b_2]\to M$. Since $[a_1,b_1]\subset[a_2,b_2]$, we get
$f_2'(t)=f_1'(t)$ for all $t\in T\cap[a_1,b_1]$. Proceeding this way, for each $k\in\Nb$ we
obtain a subsequence $\{J_k(j)\}_{j=1}^\infty$ of $\{J_{k-1}(j)\}_{j=1}^\infty$ and a
bounded regulated function $f_k':T\cap[a_k,b_k]\to M$ such that $f_{J_k(j)}\to f_k'$ on
$T\cap[a_k,b_k]$ as $j\to\infty$ and $f_k'(t)=f_{k-1}'(t)$ for all
$t\in T\cap[a_{k-1},b_{k-1}]$. Define $f:T\to M$ as follows: given $t\in T$, we have
$\inf T<t<\sup T$, so there is $k=k(t)\in\Nb$ such that $t\in T\cap[a_k,b_k]$, and so,
we set $f(t)=f_k'(t)$. The diagonal sequence $\{f_{J_j(j)}\}_{j=1}^\infty$ converges
\pw\ on $T$ to the function $f$, which satisfies the \emph{conclusions\/} of
Theorem~\ref{t:SPloc}.
\sq

Theorem~\ref{t:SP} implies immediately that if $(M,d)$ is a \emph{proper\/} metric space,
$\{f_j\}\subset M^T$ is a \pw\ \rc\ sequence and there is $E\subset T$ of measure zero,
$\mathcal{L}(E)=0$, such that $\limsup_{j\to\infty}V_\vep(f_j,T\setminus E)<\infty$ for
all $\vep>0$, then a subsequence of $\{f_j\}$ converges a.e.\ (=\,almost everywhere)
on $T$ to a function $f\in M^T$ such that $V_\vep(f,T\setminus E)<\infty$ for all
$\vep>0$. The following theorem is a \emph{selection principle for the a.e.\ convergence\/}
(it may be considered as subsequence-converse to Remark~\ref{r:neces}(b) concerning
the `almost necessity' of condition \eq{e:sp} for the \pw\ convergence).

\begin{theorem} \label{t:SPae}
Suppose $T\subset\Rb$, $(M,d)$ is a {\sl proper} metric space and $\{f_j\}\subset M^T$
is a \pw\ \rc\/ {\rm(}or a.e.\ \rc{\rm)}~on~$T$ sequence of functions satisfying the
condition\/{\rm:} for every $\eta>0$ there is a measurable set $E_\eta\subset T$ of
Lebesgue measure $\mathcal{L}(E_\eta)\le\eta$ such that
  \begin{equation} \label{e:spae}
\limsup_{j\to\infty}V_\vep(f_j,T\setminus E_\eta)<\infty\quad\mbox{for all}\quad\vep>0.
  \end{equation}
Then a subsequence of $\{f_j\}$ converges a.e.\ on $T$ to a function $f\in M^T$
having the property\/{\rm:} given $\eta>0$, there is a measurable set
$E'_\eta\subset T$ of Lebesgue measure $\mathcal{L}(E'_\eta)\le\eta$ such that
$V_\vep(f,T\setminus E'_\eta)<\infty$ for all $\vep>0$.
\end{theorem}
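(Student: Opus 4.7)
The plan is to use the hypothesis \eq{e:spae} along a decreasing sequence $\eta_n\to0$, construct via Theorem~\ref{t:SP} a nested family of subsequences converging pointwise on the sets $T\setminus E_{\eta_n}$ to a compatible family of limit functions, and then take a diagonal subsequence. The fact that the `exceptional' sets shrink in measure will give a.e.\ convergence on $T$ because $\mathcal{L}(\bigcap_nE_{\eta_n})=0$.

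Concretely, set $\eta_n=1/n$ and choose $E_n:=E_{\eta_n}\subset T$ with $\mathcal{L}(E_n)\le1/n$ and $\limsup_jV_\vep(f_j,T\setminus E_n)<\infty$ for every $\vep>0$; in the a.e.\ \rc\ case, enlarge $E_n$ by the null set of points where $\{f_j(t)\}$ fails to be \rc\ in $M$ (this preserves $\mathcal{L}(E_n)\le1/n$). For $n=1$, I would apply Theorem~\ref{t:SP} to $\{f_j\}$ restricted to $T\setminus E_1$ (which is \pw\ \rc\ there and satisfies \eq{e:sp} on this set) to extract $\{f_{J_1(j)}\}$ converging pointwise on $T\setminus E_1$ to a bounded regulated function $f^{(1)}:T\setminus E_1\to M$ with $V_\vep(f^{(1)},T\setminus E_1)<\infty$ for all $\vep>0$; the finiteness of the limit's approximate variation comes from the `in addition' clause of Theorem~\ref{t:SP}, since $(M,d)$ is proper. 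Inductively, given $\{f_{J_{n-1}(j)}\}$, apply the same Theorem to its restriction to $T\setminus E_n$ to produce a further subsequence $\{f_{J_n(j)}\}$ converging pointwise on $T\setminus E_n$ to $f^{(n)}:T\setminus E_n\to M$ with $V_\vep(f^{(n)},T\setminus E_n)<\infty$. Because $\{f_{J_n(j)}\}$ is a subsequence of $\{f_{J_{n-1}(j)}\}$, uniqueness of pointwise limits forces $f^{(n)}=f^{(n-1)}$ on $(T\setminus E_n)\cap(T\setminus E_{n-1})$, so the family $\{f^{(n)}\}$ is compatible and unambiguously defines $f:T\setminus\bigcap_nE_n\to M$ by $f(t)=f^{(n)}(t)$ whenever $t\notin E_n$; extend $f$ arbitrarily (e.g.\ by a fixed constant) to the null set $\bigcap_nE_n$ so that $f\in M^T$.

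The diagonal subsequence $\{f_{J_p(p)}\}_{p=1}^\infty$ is, for every fixed $n$, eventually a subsequence of $\{f_{J_n(j)}\}$, hence converges pointwise on $T\setminus E_n$ to $f^{(n)}=f|_{T\setminus E_n}$. Consequently $f_{J_p(p)}\to f$ pointwise on $\bigcup_n(T\setminus E_n)=T\setminus\bigcap_nE_n$, and $\mathcal{L}(\bigcap_nE_n)\le\mathcal{L}(E_n)\le1/n$ for every $n$ forces $\mathcal{L}(\bigcap_nE_n)=0$, giving a.e.\ convergence on $T$. For the stated property of $f$: given $\eta>0$, pick $n$ with $1/n\le\eta$ and set $E'_\eta=E_n$; then $\mathcal{L}(E'_\eta)\le\eta$ and, since $f$ coincides with $f^{(n)}$ on $T\setminus E'_\eta$, one has $V_\vep(f,T\setminus E'_\eta)=V_\vep(f^{(n)},T\setminus E_n)<\infty$ for every $\vep>0$. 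The only genuinely delicate point is the compatibility of the functions $f^{(n)}$ across steps, which as indicated reduces to uniqueness of pointwise limits along nested subsequences; the rest is routine bookkeeping with Cantor's diagonal procedure and the elementary measure-theoretic fact $\mathcal{L}(\bigcap_nE_n)=0$.
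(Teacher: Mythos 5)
Your proposal is correct and follows essentially the same route as the paper's own proof: choose $E_n=E_{1/n}$ (augmented by the null set where relative compactness fails), apply Theorem~\ref{t:SP} iteratively to obtain nested subsequences and compatible limits $f^{(n)}$ on $T\setminus E_n$, diagonalize, and use $\mathcal{L}\bigl(\bigcap_nE_n\bigr)=0$ together with $T\setminus\bigcap_nE_n=\bigcup_n(T\setminus E_n)$. The compatibility argument via uniqueness of limits along nested subsequences and the choice $E'_\eta=E_n$ are exactly the paper's steps, so nothing further is needed.
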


\proof
We follow the proof of Theorem~6 from \cite{JMAA05} with appropriate modifications.
Let $T_0\subset T$ be a set of Lebesgue measure zero such that the set
$\{f_j(t):j\in\Nb\}$ is \rc\ in $M$ for all $t\in T\setminus T_0$. We employ
Theorem~\ref{t:SP} several times as well as the diagonal procedure. By the assumption,
there is a measurable set $E_1\subset T$ of measure $\mathcal{L}(E_1)\le1$ such that
\eq{e:spae} holds with $\eta=1$. The sequence $\{f_j\}$ is \pw\ \rc\ on
$T\setminus(T_0\cup E_1)$ and, by Lemma~\ref{l:ele}(b), for all $\vep>0$,
  \begin{equation*}
\limsup_{j\to\infty}V_\vep(f_j,T\setminus(T_0\cup E_1))\le
\limsup_{j\to\infty}V_\vep(f_j,T\setminus E_1)<\infty.
  \end{equation*}
By Theorem~\ref{t:SP}, there are a subsequence $\{J_1(j)\}_{j=1}^\infty$ of
$\{j\}_{j=1}^\infty$ and a function $f^{(1)}:T\setminus(T_0\cup E_1)\to M$,
satisfying $V_\vep(f^{(1)},T\setminus(T_0\cup E_1))<\infty$ for all $\vep>0$,
such that $f_{J_1(j)}\to f^{(1)}$ \pw\ on $T\setminus(T_0\cup E_1)$ as $j\to\infty$.
Inductively, if $k\ge2$ and a subsequence $\{J_{k-1}(j)\}_{j=1}^\infty$ of
$\{j\}_{j=1}^\infty$ is already chosen, by the assumption \eq{e:spae}, there is a
measurable set $E_k\subset T$ with $\mathcal{L}(E_k)\le1/k$ such that
$\limsup_{j\to\infty}V_\vep(f_j,T\setminus E_k)<\infty$ for all $\vep>0$.
The sequence $\{f_{J_{k-1}(j)}\}_{j=1}^\infty$ is \pw\ \rc\ on
$T\setminus(T_0\cup E_k)$ and, again by Lemma~\ref{l:ele}(b), for all $\vep>0$,
  \begin{align*}
\!\!\!\!\limsup_{j\to\infty}V_\vep(f_{J_{k-1}(j)},T\setminus(T_0\cup E_k))&\le
\limsup_{j\to\infty}V_\vep(f_{J_{k-1}(j)},T\setminus E_k)\\[3pt]
&\le\limsup_{j\to\infty}V_\vep(f_j,T\setminus E_k)<\infty.
  \end{align*}
Theorem~\ref{t:SP} implies the existence of a subsequence $\{J_k(j)\}_{j=1}^\infty$
of $\{J_{k-1}(j)\}_{j=1}^\infty$ and a function $f^{(k)}:T\setminus(T_0\cup E_k)\to M$,
satisfying $V_\vep(f^{(k)},T\setminus(T_0\cup E_k))<\infty$ for all $\vep>0$, such that
$f_{J_k(j)}\to f^{(k)}$ \pw\ on $T\setminus(T_0\cup E_k)$ as $j\to\infty$.

The set $E=T_0\cup\bigcap_{k=1}^\infty E_k$ is of measure zero, and we have the
equality $T\setminus E=\bigcup_{k=1}^\infty(T\setminus(T_0\cup E_k))$. Define the
function $f:T\setminus E\to M$ as follows: given $t\in T\setminus E$, there is $k\in\Nb$
such that $t\in T\setminus(T_0\cup E_k)$, and so, we set $f(t)=f^{(k)}(t)$. The value
$f(t)$ is well-defined, i.e., it is independent of a particular $k$: in fact, if
$t\in T\setminus(T_0\cup E_{k_1})$ for some $k_1\in\Nb$ with, say, $k<k_1$ (with
no loss of generality), then, by the construction above, $\{J_{k_1}(j)\}_{j=1}^\infty$
is a subsequence of $\{J_k(j)\}_{j=1}^\infty$, which implies
  \begin{equation*}
f^{(k_1)}(t)=\lim_{j\to\infty}f_{J_{k_1}(j)}(t)=\lim_{j\to\infty}f_{J_k(j)}(t)
=f^{(k)}(t)\quad\mbox{in}\quad M.
  \end{equation*}
Let us show that the diagonal sequence $\{f_{J_j}(j)\}_{j=1}^\infty$ (which, of course,
is a subsequence of the original sequence $\{f_j\}$) converges \pw\ on $T\setminus E$
to the function~$f$. To see this, suppose $t\in T\setminus E$. Then
$t\in T\setminus(T_0\cup E_k)$ for some $k\in\Nb$, and so, $f(t)=f^{(k)}(t)$.
Since $\{f_{J_j(j)}\}_{j=k}^\infty$ is a subsequence of $\{f_{J_k(j)}\}_{j=1}^\infty$,
we find
  \begin{equation*}
\lim_{j\to\infty}f_{J_j(j)}(t)=\lim_{j\to\infty}f_{J_k(j)}(t)=f^{(k)}(t)=f(t)\quad
\mbox{in}\quad M.
  \end{equation*}
We extend $f$ from $T\setminus E$ to the whole $T$ arbitrarily and denote this extension
again by $f$. Given $\eta>0$, pick the minimal $k\in\Nb$ such that $1/k\le\eta$ and
set $E'_\eta=T_0\cup E_k$. It follows that $\mathcal{L}(E'_\eta)=\mathcal{L}(E_k)%
\le1/k\le\eta$, $f=f^{(k)}$ on $T\setminus(T_0\cup E_k)=T\setminus E'_\eta$, and
  \begin{equation*}
V_\vep(f,T\setminus E'_\eta)=V_\vep(f^{(k)},T\setminus(T_0\cup E_k))<\infty\quad
\mbox{for all}\quad\vep>0,
  \end{equation*}
which was to be proved.
\sq

\section{Weak pointwise selection principles} \label{ss:weak}

In this section, we establish a variant of Theorem~\ref{t:SP} for functions with values
in a reflexive Banach space taking into account some specific features of this case
(such as the validity of the weak \pw\ convergence of sequences of functions).

Suppose $(M,\|\cdot\|)$ is a normed linear space over the field $\Kb=\Rb$ or $\Cb$
(equipped with the absolute value $|\cdot|$) and $M^*$ is its \emph{dual}, i.e.,
$M^*=L(M;\Kb)$ is the space of all continuous (=\,bounded) linear functionals on~$M$.
Recall that $M^*$ is a Banach space under the norm
  \begin{equation} \label{e:nrm*}
\|x^*\|=\sup\bigl\{|x^*(x)|:\mbox{$x\in M$ and $\|x\|\le1$}\bigr\},\quad x^*\in M^*.
  \end{equation}
The natural duality between $M$ and $M^*$ is determined by the bilinear functional
$\lan\cdot,\cdot\ran:M\times M^*\to\Kb$ defined by $\lan x,x^*\ran=x^*(x)$ for all
$x\in M$ and $x^*\in M^*$. Recall also that a sequence $\{x_j\}\subset M$ is said to
\emph{converge weakly\/} in $M$ to an element $x\in M$, written as $x_j\wto x$ in $M$,
if $\lan x_j,x^*\ran\to\lan x,x^*\ran$ in $\Kb$ as $j\to\infty$ for all $x^*\in M^*$.
It is well known that if $x_j\wto x$ in $M$, then $\|x\|\le\liminf_{j\to\infty}\|x_j\|$.

The notion of the approximate variation $\{V_\vep(f,T)\}_{\vep>0}$ for $f\in M^T$ is
introduced as in \eq{e:av} with respect to the induced metrics $d(x,y)=\|x-y\|$,
$x,y\in M$, and $d_{\infty,T}(f,g)=\|f-g\|_{\infty,T}=\sup_{t\in T}\|f(t)-g(t)\|$,
$f,g\in M^T$.

\begin{theorem} \label{t:SPweak}
Let $T\subset\Rb$ and $(M,\|\cdot\|)$ be a reflexive Banach space with separable dual
$(M^*,\|\cdot\|)$. Suppose the sequence $\{f_j\}\subset M^T$ is such that
  \begin{itemize}
\renewcommand{\itemsep}{0.0pt plus 0.5pt minus 0.25pt}
\item[{\rm(i)}] $\sup_{j\in\Nb}\|f_j(t_0)\|\le C_0$ for some $t_0\in T$ and $C_0\ge0;$
\item[{\rm(ii)}] $v(\vep)\equiv\limsup_{j\to\infty}V_\vep(f_j,T)<\infty$ for all $\vep>0$.
  \end{itemize}
Then, there is a subsequence of $\{f_j\}$, again denoted by $\{f_j\}$, and a function
$f\in M^T$, satisfying $V_\vep(f,T)\le v(\vep)$ for all $\vep>0$ {\rm(}and, a fortiori,
$f$ is bounded and regulated on $T${\rm)}, such that $f_j(t)\wto f(t)$ in $M$
for all $t\in T$.
\end{theorem}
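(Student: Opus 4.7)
The plan is to reduce to the metric-space selection principles of Section~\ref{ss:metsp} by metrizing the weak topology on norm-bounded sets, and then recover the sharp norm $\vep$-variation bound for the weak limit by a secondary weak-Helly-type extraction. First I would establish pointwise norm boundedness: applying (ii) with $\vep=1$ we may, after discarding finitely many terms, assume $V_1(f_j,T)\le v(1)+1$ for every $j$; Lemma~\ref{l:71}(b) then gives $|f_j(T)|\le v(1)+3$, and combining with (i) via the triangle inequality yields $\|f_j(t)\|\le R:=v(1)+3+C_0$ for all $j\in\Nb$ and $t\in T$. Let $B_R=\{x\in M:\|x\|\le R\}$; by reflexivity this is weakly compact, and by separability of $M^*$ one may fix a countable set $\{x_n^*\}_{n=1}^\infty$ dense in the closed unit ball of $M^*$ and define
  \begin{equation*}
\rho(x,y)=\sum_{n=1}^\infty 2^{-n}|\lan x-y,x_n^*\ran|,\quad x,y\in M.
  \end{equation*}
Then $\rho$ is a metric on $M$ with $\rho\le\|\cdot\|$ that metrizes the weak topology on $B_R$, so $(B_R,\rho)$ is a compact---hence \emph{proper}---metric space.

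From $\rho\le\|\cdot\|$ and definition \eq{e:av} one reads off $V_\vep^\rho(f_j,T)\le V_\vep^{\|\cdot\|}(f_j,T)$ for every $j$ and $\vep$, so condition (ii) transfers to $\rho$; moreover, $\{f_j(t)\}\subset B_R$ is $\rho$-relatively compact at each $t\in T$. Applying Theorem~\ref{t:SP} (or Theorem~\ref{t:SPprop}) in the proper space $(B_R,\rho)$ then yields a subsequence $\{f_{j_p}\}$ converging $\rho$-pointwise---equivalently, weakly pointwise in $M$---to some $f:T\to B_R$.

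It remains to prove $V_\vep^{\|\cdot\|}(f,T)\le v(\vep)$ for every $\vep>0$. Fix $\vep>0$ and $\delta>0$; for all $p$ sufficiently large, definition \eq{e:av} supplies $g_p\in\BV(T;M)$ with $\|f_{j_p}-g_p\|_{\infty,T}\le\vep$ and $V(g_p,T)\le V_\vep(f_{j_p},T)+1/p\le v(\vep)+\delta$. Then $\{g_p\}$ is norm-bounded pointwise by $R+\vep$ and has uniformly bounded Jordan variation, so property (V.4) applied in the proper space $(B_{R+\vep},\rho)$ (again invoking $V^\rho\le V^{\|\cdot\|}$) produces a sub-subsequence $\{g_{p_q}\}$ converging $\rho$-pointwise---hence weakly pointwise---to some $g\in M^T$. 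Weak lower semicontinuity of $\|\cdot\|$ gives, for any partition $\{t_i\}_{i=0}^m$ of $T$,
  \begin{equation*}
\sum_{i=1}^m\|g(t_i)-g(t_{i-1})\|\le\liminf_q\sum_{i=1}^m\|g_{p_q}(t_i)-g_{p_q}(t_{i-1})\|\le\liminf_q V(g_{p_q},T)\le v(\vep)+\delta,
  \end{equation*}
whence $V(g,T)\le v(\vep)+\delta$ and $g\in\BV(T;M)$; likewise $\|f(t)-g(t)\|\le\liminf_q\|f_{j_{p_q}}(t)-g_{p_q}(t)\|\le\vep$ for every $t\in T$. Therefore $V_\vep^{\|\cdot\|}(f,T)\le V(g,T)\le v(\vep)+\delta$, and letting $\delta\to0^+$ yields the claim; boundedness and regulatedness of $f$ then follow from Lemma~\ref{l:71}(b) and \eq{e:supReg}.

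The main obstacle is precisely this last step: weak pointwise convergence is too weak to transfer the norm $\vep$-variation bound directly, since a naive passage to the limit via Lemma~\ref{l:uc}(a) in the $\rho$-metric yields only $V_\vep^\rho(f,T)\le v(\vep)$ or, in norm, at best a bound of the form $V_\vep^{\|\cdot\|}(f,T)\le v(\vep-0)$. The two-stage extraction---first a weakly pointwise convergent $\{f_{j_p}\}$ produced in the $\rho$-metric, then for each $\vep$ a further subsequence of the norm-BV approximants $g_p$---together with weak lower semicontinuity of the norm, transfers both the $\vep$-closeness and the variation bound to the weak limit $g$, thereby certifying $g$ as a competitor in the defining infimum \eq{e:av} for $V_\vep^{\|\cdot\|}(f,T)$.
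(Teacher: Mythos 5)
Your argument is correct in substance, but it takes a genuinely different route from the paper. The paper scalarizes: for each $x^*\in M^*$ it applies Theorem~\ref{t:SP} to $t\mapsto\lan f_j(t),x^*\ran$ (using the estimate $V_\vep(f_j^{x^*},T)\le V_{\vep/\|x^*\|}(f_j,T)\,\|x^*\|$), diagonalizes over a countable dense subset of $M^*$, upgrades to all of $M^*$ by density, and assembles the weak limit $f(t)$ via reflexivity and the uniform boundedness principle; the final bound $V_\vep(f,T)\le v(\vep)$ is then obtained by applying an external weak Helly-type selection principle to the norm-BV approximants $g_j$ and passing to the limit with weak lower semicontinuity of the norm. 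You instead metrize the weak topology on a norm-bounded ball by $\rho(x,y)=\sum_n2^{-n}|\lan x-y,x_n^*\ran|$, observe that the resulting space is compact (hence proper), and run the paper's own metric-space machinery --- Theorem~\ref{t:SP} for the first extraction and property (V.4) for the second --- entirely inside that space. Your final step (two-stage extraction plus weak lower semicontinuity of the norm to certify $g$ as a competitor in \eq{e:av}) is essentially identical to the paper's Step~6, except that you rederive the needed weak compactness of $\{g_p\}$ from (V.4) in the $\rho$-metric rather than citing the weak BV selection theorem. What your route buys is self-containedness (no appeal to the weak Helly principle of Barbu--Precupanu or [JMAA05]) and the elimination of the functional-by-functional diagonalization; what it costs is the metrization lemma, which is standard but external to the paper, and it correctly identifies that the naive limit in the $\rho$-metric only controls $V_\vep^\rho$, not $V_\vep^{\|\cdot\|}$.

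One technical point you should patch: the inequality $V_\vep^\rho(f_j,T)\le V_\vep^{\|\cdot\|}(f_j,T)$ is not automatic if the $\rho$-variation is computed in the metric space $(B_R,\rho)$, because the near-optimal norm competitors $g$ for $f_j$ at level $\vep$ only satisfy $\|g(t)\|\le R+\vep$ and need not map into $B_R$; restricting the competitors to $B_R$-valued functions could a priori increase the infimum in \eq{e:av}. This is easily repaired: work in $(B_{3R},\rho)$, note that for $\vep<2R$ the norm competitors land in $B_{R+\vep}\subset B_{3R}$, while for $\vep\ge2R$ the constant competitor $c\equiv f_j(t_0)\in B_R$ already gives $V_\vep^\rho(f_j,T)=0$ by \eq{e:zero}. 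The same remark applies to the second-stage application of (V.4), where your choice of $B_{R+\vep}$ is already adequate.
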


\proof
1. First, we show that there is $j_0\in\Nb$ such that $C(t)\equiv\sup_{j\ge j_0}\|f_j(t)\|$
is finite for all $t\in T$. In fact, by Lemma~\ref{l:71}(b), $|f_j(T)|\le V_\vep(f_j,T)+2\vep$
with, say, $\vep=1$, for all $j\in\Nb$, which implies
  \begin{equation*}
\limsup_{j\to\infty}|f_j(T)|\le\limsup_{j\to\infty}V_1(f_j,T)+2=v(1)+2<\infty\quad
\mbox{by (ii)},
  \end{equation*}
and so, there is $j_0\in\Nb$ and a constant $C_1>0$ such that $|f_j(T)|\le C_1$ for
all $j\ge j_0$. Now, given $j\ge j_0$ and $t\in T$, we get, by (i),
  \begin{equation*}
\|f_j(t)\|\le\|f_j(t_0)\|+\|f_j(t)-f_j(t_0)\|\le C_0+|f_j(T)|\le C_0+C_1,
  \end{equation*}
i.e., $C(t)\le C_0+C_1$ for all $t\in T$.

2. Given $j\in\Nb$ and $x^*\in M^*$, we set $f_j^{x^*}(t)=\lan f_j(t),x^*\ran$ for
all $t\in T$. Let us verify that the sequence $\{f_j^{x^*}\}_{j=j_0}^\infty\subset\Kb^T$
satisfies the assumptions of Theorem~\ref{t:SP}. By \eq{e:nrm*} and Step~1, we have
  \begin{equation} \label{e:bfu*}
\mbox{$|f_j^{x^*}(t)|\le\|f_j(t)\|\!\cdot\!\|x^*\|\le C(t)\|x^*\|$ \,\,\,for all \,\,$t\in T$
\,and \,$j\ge j_0$,}
  \end{equation}
and so, $\{f_j^{x^*}\}_{j=j_0}^\infty$ is \pw\ \rc\ on $T$. If $x^*=0$, then
$f_j^{x^*}=0$ in $\Kb^T$, which implies $V_\vep(f_j^{x^*},T)=0$ for all $j\in\Nb$
and $\vep>0$. Now, we show that if $x^*\ne0$, then
  \begin{equation} \label{e:Veofn}
\mbox{$V_\vep(f_j^{x^*},T)\le V_{\vep/\|x^*\|}(f_j,T)\|x^*\|$ \,\,\,for all 
\,\,\,$j\in\Nb$ \,and \,$\vep>0$.}
  \end{equation}
To prove \eq{e:Veofn}, we may assume that $V_{\vep/\|x^*\|}(f_j,T)<\infty$.
By definition \eq{e:av}, for every $\eta>0$ there is $g_j=g_{j,\eta}\in\BV(T;M)$
(also depending on $\vep$ and $x^*$) such that
  \begin{equation*}
\|f_j-g_j\|_{\infty,T}\le\vep/\|x^*\|\quad\mbox{and}\quad
V(g_j,T)\le V_{\vep/\|x^*\|}(f_j,T)+\eta.
  \end{equation*}
Setting $g_j^{x^*}(t)=\lan g_j(t),x^*\ran$ for all $t\in T$ (and so,
$g_j^{x^*}\in\Kb^T$), we find
  \begin{align*}
\bigl|f_j^{x^*}-g_j^{x^*}\bigr|_{\infty,T}&=\sup_{t\in T}|\lan f_j(t)-g_j(t),x^*\ran|
  \le\sup_{t\in T}\|f_j(t)-g_j(t)\|\!\cdot\!\|x^*\|\\[2pt]
&=\|f_j-g_j\|_{\infty,T}\|x^*\|\le(\vep/\|x^*\|)\|x^*\|=\vep.
  \end{align*}
Furthermore, it is straightforward that $V(g_j^{x^*},T)\!\le\! V(g_j,T)\|x^*\|$. Once
again from definition \eq{e:av}, it follows that
  \begin{equation*}
V_\vep(f_j^{x^*},T)\le V(g_j^{x^*},T)\le V(g_j,T)\|x^*\|\le
\bigl(V_{\vep/\|x^*\|}(f_j,T)+\eta\bigr)\|x^*\|.
  \end{equation*}
Passing to the limit as $\eta\to+0$, we arrive at \eq{e:Veofn}.
Now, by \eq{e:Veofn} and (ii), for every $\vep>0$ and $x^*\in M^*$, $x^*\ne0$,
we have
  \begin{equation} \label{e:vue*}
v_{x^*}(\vep)\equiv\limsup_{j\to\infty}V_\vep(f_j^{x^*},T)
\le v(\vep/\|x^*\|)\|x^*\|<\infty.
  \end{equation}

Taking into account \eq{e:bfu*} and \eq{e:vue*}, given $x^*\in M^*$, we may
apply Theorem~\ref{t:SP} to the sequence $\{f_j^{x^*}\}_{j=j_0}^\infty\subset\Kb^T$
and extract a subsequence $\{f_{j,x^*}\}_{j=1}^\infty$ (depending on $x^*$
as well) of $\{f_j\}_{j=j_0}^\infty$ and find a function $f_{x^*}\in\Kb^T$, satisfying
$V_\vep(f_{x^*},T)\le v_{x^*}(\vep)$ for all $\vep>0$ (and so, $f_{x^*}$ is bounded
and regulated on $T$), such that $\lan f_{j,x^*}(t),x^*\ran\!\to\! f_{x^*}(t)$ in $\Kb$
as $j\!\to\!\infty$ for all \mbox{$t\in T$}.

3. Making use of the diagonal procedure, we are going to get rid of the dependence
of $\{f_{j,x^*}\}_{j=1}^\infty$ on the element $x^*\in M^*$. Since $M^*$ is separable,
$M^*$ contains a countable dense subset $\{x^*_k\}_{k=1}^\infty$. Setting
$x^*=x_1^*$ in \eq{e:bfu*} and \eq{e:vue*} and applying Theorem~\ref{t:SP} to the
sequence of functions $f_j^{x_1^*}=\lan f_j(\cdot),x_1^*\ran\in\Kb^T$, $j\ge j_0$,
we obtain a subsequence $\{J_1(j)\}_{j=1}^\infty$ of $\{j\}_{j=j_0}^\infty$ and a
function $f_{x_1^*}\in\Kb^T$ (both depending on $x_1^*$), satisfying
$V_\vep(f_{x_1^*},T)\le v_{x_1^*}(\vep)$ for all $\vep>0$, such that
$\lan f_{J_1(j)}(t),x_1^*\ran\to f_{x_1^*}(t)$ in $\Kb$ as $j\to\infty$ for all $t\in T$.
Inductively, assume that $k\ge2$ and the subsequence $\{J_{k-1}(j)\}_{j=1}^\infty$
of $\{j\}_{j=j_0}^\infty$ is already chosen. Putting $x^*=x_k^*$, replacing
$j$ by $J_{k-1}(j)$ in \eq{e:bfu*} and taking into account \eq{e:vue*}, we get 
  \begin{equation*}
\sup_{j\in\Nb}\bigl|\lan f_{J_{k-1}(j)}(t),x_k^*\ran\bigr|\le C(t)\|x_k^*\|\quad
\mbox{for all}\quad t\in T
  \end{equation*}
and
  \begin{equation*}
\limsup_{j\to\infty}V_\vep\bigl(\lan f_{J_{k-1}(j)}(\cdot),x_k^*\ran,T\bigr)
\le v_{x_k^*}(\vep)<\infty\quad\mbox{for all}\quad\vep>0.
  \end{equation*}
By Theorem~\ref{t:SP}, applied to the sequence $\{f_{J_{k-1}(j)}^{x_k^*}\}_{j=1}^%
\infty\subset\Kb^T$, there are a subsequence $\{J_k(j)\}_{j=1}^\infty$ of
$\{J_{k-1}(j)\}_{j=1}^\infty$ and a function $f_{x_k^*}\in\Kb^T$, satisfying
$V_\vep(f_{x_k^*},T)\le v_{x_k^*}(\vep)$ for all $\vep>0$, such that
$\lan f_{J_k(j)}(t),x_k^*\ran\to f_{x_k^*}(t)$ in $\Kb$ as $j\to\infty$ for all $t\in T$.
It follows that the diagonal subsequence $\{f_{J_j(j)}\}_{j=1}^\infty$ of
$\{f_j\}_{j=j_0}^\infty$, denoted by $\{f_j\}_{j=1}^\infty$, satisfies the condition:
  \begin{equation} \label{e:fjfu}
\mbox{$\D\lim_{j\to\infty}\lan f_j(t),x_k^*\ran=f_{x_k^*}(t)$ \,\,\,for all\,\,\,
$t\in T$ \,and \,$k\in\Nb$.}
  \end{equation}

4. Let us show that the sequence $f_j^{x^*}(t)=\lan f_j(t),x^*\ran$, $j\in\Nb$, is
Cauchy in $\Kb$ for every $x^*\in M^*$ and $t\in T$. Since the sequence
$\{x_k^*\}_{k=1}^\infty$ is dense in $M^*$, given $\eta>0$, there is
$k=k(\eta)\in\Nb$ such that $\|x^*-x_k^*\|\le\eta/(4C(t)+1)$, and, by \eq{e:fjfu},
there is $j^0=j^0(\eta)\in\Nb$ such that
$\bigl|\lan f_j(t),x_k^*\ran-\lan f_{j'}(t),x_k^*\ran\bigr|\le\eta/2$ for all $j,j'\ge j^0$.
Hence
  \begin{align*}
|f_j^{x^*}(t)-f_{j'}^{x^*}(t)|&\le\|f_j(t)-f_{j'}(t)\|\!\cdot\!\|x^*-x_k^*\|
  +|\lan f_j(t),x_k^*\ran-\lan f_{j'}(t),x_k^*\ran|\\[3pt]
&\le2C(t)\frac{\eta}{4C(t)+1}+\frac\eta2\le\eta\quad\mbox{for all}\quad j,j'\ge j^0.
  \end{align*}
By the completeness of $(\Kb,|\cdot|)$, there is $f_{x^*}(t)\in\Kb$ such that
$f_j^{x^*}(t)\to f_{x^*}(t)$ in $\Kb$ as $j\to\infty$. Thus, we have shown that
for every $x^*\in M^*$ there is a function $f_{x^*}\in\Kb^T$ satisfying,
by virtue of Lemma~\ref{l:proper}(c), \eq{e:Veofn} and \eq{e:vue*},
  \begin{equation*}
V_\vep(f_{x^*},T)\le\liminf_{j\to\infty}V_\vep(f_j^{x^*},T)\le v_{x^*}(\vep)
\quad\mbox{for all}\quad\vep>0
  \end{equation*}
and such that
  \begin{equation} \label{e:uu*}
\mbox{$\D\lim_{j\to\infty}\lan f_j(t),x^*\ran=f_{x^*}(t)$ \,in \,$\Kb$
\,\,\,for all \,\,\,$t\in T$.}
  \end{equation}

5. Now, we show that, for every $t\in T$, the sequence $\{f_j(t)\}$ converges
weakly in $M$ to an element of $M$. The reflexivity of $M$ implies
  \begin{equation*}
f_j(t)\in M=M^{**}=L(M^*;\Kb)\quad\mbox{for all}\quad j\in\Nb.
  \end{equation*}
Define the functional $F_t:M^*\to\Kb$ by $F_t(x^*)=f_{x^*}(t)$ for all $x^*\in M^*$.
It follows from \eq{e:uu*} that
  \begin{equation*}
\lim_{j\to\infty}\lan f_j(t),x^*\ran=f_{x^*}(t)=F_t(x^*)\quad\mbox{for all}
\quad x^*\in M^*,
  \end{equation*} 
i.e., the sequence of functionals $\{f_j(t)\}\subset L(M^*;\Kb)$ converges \pw\
on $M^*$ to the functional $F_t:M^*\to\Kb$. By the uniform boundedness principle,
$F_t\in L(M^*;\Kb)$ and $\|F_t\|\le\liminf_{j\to\infty}\|f_j(t)\|$. Setting $f(t)=F_t$
for all $t\in T$, we find $f\in M^T$ and, for all $x^*\in M^*$ and $t\in T$,
  \begin{equation} \label{e:wto}
\lim_{j\to\infty}\lan f_j(t),x^*\ran=F_t(x^*)=\lan F_t,x^*\ran=\lan f(t),x^*\ran,
  \end{equation}
which means that $f_j(t)\wto f(t)$ in $M$ for all $t\in T$. (Note that \eq{e:uu*} and
\eq{e:wto} imply $f_{x^*}(t)=\lan f(t),x^*\ran$ for all $x^*\in M^*$ and $t\in T$.)

6. It remains to prove that $V_\vep(f,T)\le v(\vep)$ for all $\vep>0$. Recall that the
sequence $\{f_j\}\subset M^T$, we deal with here, is the diagonal sequence
$\{f_{J_j(j)}\}_{j=1}^\infty$ from the end of Step~3, which satisfies conditions
\eq{e:wto} and, in place of (ii),
  \begin{equation} \label{e:newve}
\limsup_{j\to\infty}V_\vep(f_j,T)\le v(\vep)\quad\mbox{for all}\quad\vep>0.
  \end{equation}
Let us fix $\vep>0$. Since $v(\vep)<\infty$ by (ii), for every $\eta>v(\vep)$ condition
\eq{e:newve} implies the existence of $j_1=j_1(\eta,\vep)\in\Nb$ such that
$\eta>V_\vep(f_j,T)$ for all $j\ge j_1$. Hence, for every $j\ge j_1$, by the definition
of $V_\vep(f_j,T)$, there is $g_j\in\BV(T;M)$ such that
  \begin{equation} \label{e:gjj}
\|f_j-g_j\|_{\infty,T}=\sup_{t\in T}\|f_j(t)-g_j(t)\|\le\vep\quad\mbox{and}\quad
V(g_j,T)\le\eta.
  \end{equation}
These conditions and assumption (i) imply $\sup_{j\ge j_1}V(g_j,T)\le\eta$ and
  \begin{equation*}
\|g_j(t_0)\|\le\|g_j(t_0)-f_j(t_0)\|+\|f_j(t_0)\|\le\|g_j-f_j\|_{\infty,T}+C_0\le\vep+C_0
  \end{equation*}
for all $j\ge j_1$. Since $(M,\|\cdot\|)$ is a reflexive Banach space with separable dual
$M^*$, by the weak Helly-type pointwise selection principle (see Theorem~7 and
Remarks (1)--(4) in \cite{JMAA05}, or Theorem~3.5 in \cite[Chapter~1]{Barbu}),
there are a subsequence $\{g_{j_p}\}_{p=1}^\infty$ of $\{g_j\}_{j=j_1}^\infty$ and
a function $g\in\BV(T;M)$ such that $g_{j_p}\wto g(t)$ in $M$ as $p\to\infty$ for all
$t\in T$. Noting that $f_{j_p}(t)\wto f(t)$ in $M$ as $p\to\infty$ for all $t\in T$ as well,
we get $f_{j_p}(t)-g_{j_p}(t)\wto f(t)-g(t)$ in $M$ as $p\to\infty$, and so, taking into
account the first condition in \eq{e:gjj}, we find
  \begin{equation*}
\|f(t)-g(t)\|\le\liminf_{p\to\infty}\|f_{j_p}(t)-g_{j_p}(t)\|\le\vep\quad\mbox{for all}
\quad t\in T,
  \end{equation*}
which implies $\|f-g\|_{\infty,T}\le\vep$. Had we already shown that $V(g,T)\le\eta$,
definition \eq{e:av} would yield $V_\vep(f,T)\le V(g,T)\le\eta$ for every $\eta>v(\vep)$,
which completes the proof of Theorem~\ref{t:SPweak}.

In order to prove that $V(g,T)\le\eta$, suppose $P=\{t_i\}_{i=0}^m\subset T$ is a
partition of $T$. Since $g_{j_p}(t)\wto g(t)$ in $M$ as $p\to\infty$ for all $t\in T$,
given $i\in\{1,2,\dots,m\}$, we have $g_{j_p}(t_i)-g_{j_p}(t_{i-1})\wto g(t_i)-g(t_{i-1})$
in $M$ as $p\to\infty$, and so,
  \begin{equation*}
\|g(t_i)-g(t_{i-1})\|\le\liminf_{p\to\infty}\|g_{j_p}(t_i)-g_{j_p}(t_{i-1})\|.
  \end{equation*}
Summing over $i=1,2,\dots,m$ and taking into account the properties of the limit inferior
and the second condition in \eq{e:gjj}, we get
  \begin{align*}
\sum_{i=1}^m\|g(t_i)-g(t_{i-1})\|&\le\sum_{i=1}^m\liminf_{p\to\infty}%
  \|g_{j_p}(t_i)-g_{j_p}(t_{i-1})\|\\
&\le\liminf_{p\to\infty}\sum_{i=1}^m\|g_{j_p}(t_i)-g_{j_p}(t_{i-1})\|\\[3pt]
&\le\liminf_{p\to\infty}V(g_{j_p},T)\le\eta.
  \end{align*}
Thus, by the arbitrariness of partition $P$ of $T$, we conclude that $V(g,T)\le\eta$,
which was to be proved.
\sq

Assumption (ii) in Theorem~\ref{t:SPweak} can be weakened as the following theorem
shows.

\begin{theorem} \label{t:SPw2}
Under the assumptions of Theorem\/~{\rm\ref{t:SPweak}} on $T$ and $(M,\|\cdot\|)$,
suppose the sequence $\{f_j\}\subset M^T$ is such that
  \begin{itemize}
\renewcommand{\itemsep}{0.0pt plus 0.5pt minus 0.25pt}
\item[{\rm(i)}] $C(t)\equiv\sup_{j\in\Nb}\|f_j(t)\|<\infty$ for all $t\in T;$
\item[{\rm(ii)}] $v_{x^*}(\vep)\equiv\limsup_{j\to\infty}%
  V_\vep(\lan f_j(\cdot),x^*\ran,T)\!<\!\infty$ for all $\vep\!>\!0$ and $x^*\in M^*$.%
\footnote{As in Step~2 of the proof of Theorem~\ref{t:SPweak},
$\lan f_j(\cdot),x^*\ran(t)=\lan f_j(t),x^*\ran=f_j^{x^*}(t)$, $t\in T$.}
  \end{itemize}
Then, there is a subsequence of $\{f_j\}$, again denoted by $\{f_j\}$, and a function
$f\in M^T$, satisfying $V_\vep(\lan f(\cdot),x^*\ran,T)\le v_{x^*}(\vep)$ for all
$\vep>0$ and $x^*\in M^*$, such that $f_j(t)\wto f(t)$ in $M$ as $j\to\infty$
for all $t\in T$.
\end{theorem}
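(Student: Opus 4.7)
The plan is to follow the structure of Steps 2--5 of the proof of Theorem~\ref{t:SPweak}, extracting what remains valid under the weakened hypotheses and replacing the final step by a much simpler scalar argument. In Theorem~\ref{t:SPweak}, assumption~(ii) was used in two distinct ways: to produce pointwise boundedness of $\{\|f_j(t)\|\}$ at every $t\in T$ from boundedness at one point~$t_0$ (Step~1), and to bound $V_\vep(f_j^{x^*},T)$ by $v(\vep/\|x^*\|)\|x^*\|$ via~\eq{e:Veofn}. Here assumption~(i) already provides the pointwise boundedness directly, and assumption~(ii) provides the bound on the scalar $\vep$-variations by hypothesis.

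First, fix $x^*\in M^*$ and consider the scalar sequence $f_j^{x^*}(t)=\lan f_j(t),x^*\ran\in\Kb$, $j\in\Nb$. By~(i), $|f_j^{x^*}(t)|\le C(t)\|x^*\|$ for all $t\in T$ and $j\in\Nb$, so $\{f_j^{x^*}\}$ is \pw\ \rc\ in the proper metric space $(\Kb,|\cdot|)$. By~(ii), $\limsup_{j\to\infty}V_\vep(f_j^{x^*},T)=v_{x^*}(\vep)<\infty$ for every $\vep>0$. Theorem~\ref{t:SP} thus applies to $\{f_j^{x^*}\}$ on~$T$, and $(\Kb,|\cdot|)$ being proper we also get $V_\vep(\mbox{limit},T)\le v_{x^*}(\vep)$ for the scalar limit, for all $\vep>0$.

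Second, since $M^*$ is separable, pick a countable dense subset $\{x_k^*\}_{k=1}^\infty\subset M^*$. By the Cantor diagonal procedure applied successively to $x_1^*,x_2^*,\dots$ (exactly as in Step~3 of the proof of Theorem~\ref{t:SPweak}), one extracts a subsequence of $\{f_j\}$, still denoted by $\{f_j\}$, and scalar functions $f_{x_k^*}\in\Kb^T$ such that $\lan f_j(t),x_k^*\ran\to f_{x_k^*}(t)$ in~$\Kb$ as $j\to\infty$ for every $t\in T$ and $k\in\Nb$. Next, exactly as in Step~4 of the proof of Theorem~\ref{t:SPweak}, for arbitrary $x^*\in M^*$ and $t\in T$, the inequality
  \begin{equation*}
|\lan f_j(t)-f_{j'}(t),x^*\ran|\le 2C(t)\,\|x^*-x_k^*\|
+|\lan f_j(t)-f_{j'}(t),x_k^*\ran|
  \end{equation*}
combined with the density of $\{x_k^*\}$ in $M^*$ and the boundedness from~(i) shows that $\{\lan f_j(t),x^*\ran\}_j$ is Cauchy in $\Kb$; denote its limit by $f_{x^*}(t)\in\Kb$.

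Third, define $F_t:M^*\to\Kb$ by $F_t(x^*)=f_{x^*}(t)$. The sequence of functionals $\{f_j(t)\}\subset M^{**}=L(M^*;\Kb)$ (by reflexivity) converges pointwise on $M^*$ to $F_t$, which is bounded by~(i); the uniform boundedness principle yields $F_t\in L(M^*;\Kb)=M^{**}$, and reflexivity gives a unique $f(t)\in M$ with $\lan f(t),x^*\ran=F_t(x^*)=f_{x^*}(t)$ for all $x^*\in M^*$. Thus $f_j(t)\wto f(t)$ in $M$ for every $t\in T$. Finally, for every $x^*\in M^*$ and $\vep>0$, since the scalar sequence $f_j^{x^*}=\lan f_j(\cdot),x^*\ran$ converges pointwise on $T$ to $\lan f(\cdot),x^*\ran$ in the proper metric space $\Kb$, Lemma~\ref{l:proper}(c) together with assumption~(ii) gives
  \begin{equation*}
V_\vep(\lan f(\cdot),x^*\ran,T)\le\liminf_{j\to\infty}V_\vep(\lan f_j(\cdot),x^*\ran,T)
\le v_{x^*}(\vep).
  \end{equation*}

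The arguments are largely routine once one notices the right division of labour. The only delicate point—where the weakening of hypotheses relative to Theorem~\ref{t:SPweak} forces an actual change—is that one cannot hope to reconstruct a bound on $V_\vep(f,T)$ itself (no $v(\vep)$ is available), only on the scalarized $\vep$-variations; happily this is exactly what is asserted, and it falls out for free from Lemma~\ref{l:proper}(c) applied to each $x^*$ separately. No analogue of the intricate Step~6 of the proof of Theorem~\ref{t:SPweak} (with its invocation of the weak Helly-type selection principle in $\BV(T;M)$) is needed here, which makes this extension notably cleaner than the result it generalizes.
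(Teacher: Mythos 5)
Your proposal is correct and follows essentially the same route as the paper: the paper's own proof of this theorem consists precisely of observing that assumption (i) yields the pointwise bound $|f_j^{x^*}(t)|\le C(t)\|x^*\|$ directly (in place of Step 1), that assumption (ii) replaces the derived estimate on $V_\vep(f_j^{x^*},T)$, and then repeating Steps 3--5 of the proof of Theorem~\ref{t:SPweak} (diagonal extraction over a countable dense subset of $M^*$, the Cauchy argument for general $x^*$, the uniform boundedness principle with reflexivity, and Lemma~\ref{l:proper}(c) applied to the scalar sequences for the variational bound). Your observation that no analogue of Step 6 is required is exactly what the paper's one-line proof reflects.
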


\proof
It suffices to note that assumption (i) implies \eq{e:bfu*} with $j_0=1$, replace
\eq{e:vue*} by assumption (ii), and argue as in Steps~3--5 of the proof of
Theorem~\ref{t:SPweak}.
\sq

The next example illustrates the applicability of Theorems~\ref{t:SPweak} and \ref{t:SPw2}.

\begin{example} \label{ex:} \rm
In examples (a) and (b) below, we assume the following. Let $M=L^2[0,2\pi]$ be the
real Hilbert space of all square Lebesgue summable functions on the interval $[0,2\pi]$
equipped with the \emph{inner product}
  \begin{equation*}
\lan x,y\ran=\int_0^{2\pi}\!\!x(s)y(s)\,ds\quad\!\!\mbox{and the \emph{norm}}
\quad\!\!\|x\|=\sqrt{\lan x,x\ran},\,\,\,x,y\in M.
  \end{equation*}
It is well known that $M$ is separable, self-adjoint ($M=M^*$), and so, reflexive
($M=M^{**}$). Given $j\in\Nb$, define two functions $x_j,y_j\in M$ by
  \begin{equation*}
x_j(s)=\sin(js)\quad\mbox{and}\quad y_j(s)=\cos(js)\quad\mbox{for all}\quad
s\in[0,2\pi].
  \end{equation*}
Clearly, $\|x_j\|=\|y_j\|=\sqrt\pi$ and, by Lyapunov-Parseval's equality,
  \begin{equation*}
\frac{\lan x,1\ran^2}8+\sum_{j=1}^\infty\Bigl(\lan x,x_j\ran^2+
\lan x,y_j\ran^2\Bigr)=\pi\|x\|^2,\quad x\in M,
  \end{equation*}
we find $\lan x,x_j\ran\to0$ and $\lan x,y_j\ran\to0$ as $j\to\infty$ for all $x\in M$,
and so, $x_j\wto0$ and $y_j\wto 0$ in $M$.

In examples (a) and (b) below, we set $T=I=[0,1]$.

(a) This example illustrates Theorem~\ref{t:SPweak}. Define the sequence
$\{f_j\}\subset M^T$ by $f_j(t)=tx_j$, $t\in T$. Clearly, $f_j(t)\wto0$ in $M$ for all
$t\in T$. Note, however, that the sequence $\{f_j(t)\}$ does \emph{not\/} converge
in (the norm of) $M$ at all points $0<t\le1$, because
$\|f_j(t)\!-\!f_k(t)\|^2=(\|x_j\|^2\!+\!\|x_k\|^2)t^2=2\pi t^2$,~\mbox{$j\ne k$}.

Since $f_j(0)=0$ in $M$ for all $j\in\Nb$, we verify only condition (ii) of Theorem~%
\ref{t:SPweak}. Setting $\vfi(t)=t$ for $t\in T$, $x=x_j$ and $y=0$ in \eq{e:fxy}, we find
$|\vfi(T)|=1$ and $\|x\|=\|x_j\|=\sqrt\pi$, and so, by virtue of \eq{e:mntn}, we get
  \begin{equation*}
V_\vep(f_j,T)=\left\{
  \begin{tabular}{ccr}
$\!\!\sqrt\pi-2\vep$ & \mbox{if} & $0<\vep<\sqrt\pi/2$\\[3pt]
$\!\!0$ & \mbox{if} & $\vep\ge\sqrt\pi/2$
  \end{tabular}\right.\quad\mbox{for all}\quad j\in\Nb,
  \end{equation*}
which implies condition (ii) in Theorem~\ref{t:SPweak}. Note that (cf.\
Lemma~\ref{l:71}(a)) $V(f_j,T)=\lim_{\vep\to+0}V_\vep(f_j,T)=\sqrt\pi$ for all
$j\in\Nb$. Also, it is to be noted that Theorem~\ref{t:SP} is inapplicable to $\{f_j\}$,
because the set $\{f_j(t):j\in\Nb\}$ is not \rc\ in (the norm of) $M$ for all $0<t\le1$.

(b) Here we present an example when Theorem~\ref{t:SPw2} is applicable, while
Theorem~\ref{t:SPweak} is not. Taking into account definition \eq{e:Dir} of the
Dirichlet function, we let the sequence $\{f_j\}\subset M^T$ be given by
$f_j(t)=\Dc_{x_j,y_j}(t)$ for all $t\in T$ and $j\in\Nb$. More explicitly,
  \begin{equation*}
f_j(t)(s)=\Dc_{x_j(s),y_j(s)}(t)=\left\{
  \begin{tabular}{ccl}
$\!\!\sin(js)$ & \!\!\mbox{if}\!\! & $t\in I_1\equiv[0,1]\cap\Qb$,\\[3pt]
$\!\!\cos(js)$ & \!\!\mbox{if}\!\! & $t\in I_2\equiv[0,1]\setminus\Qb$,
  \end{tabular}\right.\quad\!\! s\in[0,2\pi].
  \end{equation*}
Note that
  \begin{equation} \label{e:D01}
f_j(t)=\Dc_{x_j,0}(t)+\Dc_{0,y_j}(t)=\Dc_{1,0}(t)x_j+\Dc_{0,1}(t)y_j,\quad t\in T,
  \end{equation}
where $\Dc_{1,0}$ and $\Dc_{0,1}$ are the corresponding real-valued Dirichlet
functions on $T=[0,1]$. By \eq{e:D01}, $f_j(t)\wto0$ in $M$ for all $t\in T$.
On the other hand, the sequence $\{f_j(t)\}$ \emph{diverges\/} in (the norm of) $M$
at all points $t\in T$: in fact,
  \begin{equation*}
\|x_j-x_k\|^2=\lan x_j-x_k,x_j-x_k\ran=\|x_j\|^2+\|x_k\|^2=2\pi,\quad j\ne k,
  \end{equation*}
and, similarly, $\|y_j-y_k\|^2=2\pi$, $j\ne k$, from which we get
  \begin{equation*}
\|f_j(t)-f_k(t)\|=\left\{
  \begin{tabular}{ccl}
$\!\!\|x_j-x_k\|$ & \!\!\mbox{if}\!\! & $t\in I_1$\\[3pt]
$\!\!\|y_j-y_k\|$ & \!\!\mbox{if}\!\! & $t\in I_2$
  \end{tabular}\right.\!\!=\sqrt{2\pi},\quad j\ne k.
  \end{equation*}
(It already follows that Theorem~\ref{t:SP} is inapplicable to $\{f_j\}$.)

Given $t\in T$ and $j\in\Nb$, we have
  \begin{equation*}
\|f_j(t)\|=\|\Dc_{x_j,y_j}(t)\|=\left\{
  \begin{tabular}{ccl}
$\!\!\|x_j\|$ & \!\!\mbox{if}\!\! & $t\in I_1$\\[3pt]
$\!\!\|y_j\|$ & \!\!\mbox{if}\!\! & $t\in I_2$
  \end{tabular}\right.\!\!=\sqrt\pi,
  \end{equation*}
and so, conditions (i) in Theorems~\ref{t:SPweak} and \ref{t:SPw2} are satisfied.

Let us show that condition (ii) in Theorem~\ref{t:SPweak} does not hold. In fact,
by \eq{e:Dirass} and \eq{e:refi},
  \begin{equation*}
V_\vep(f_j,T)=\left\{
  \begin{tabular}{ccr}
$\!\!\infty$ & \!\!\mbox{if}\!\! & $0<\vep<\textstyle\frac12\|x_j-y_j\|$,\\[3pt]
$\!\!0$ & \!\!\mbox{if}\!\! & $\vep\ge\textstyle\frac12\|x_j-y_j\|$,
  \end{tabular}\right.
  \end{equation*}
where $\|x_j\!-\!y_j\|^2\!=\!\lan x_j\!-\!y_j,x_j-y_j\ran\!=\!%
\|x_j\|^2\!+\!\|y_j\|^2\!=\!2\pi$,
i.e., $\|x_j\!-\!y_j\|\!=\!\sqrt{2\pi}$.

Now, we show that condition (ii) in Theorem~\ref{t:SPw2} is satisfied (cf.\ Example~%
\ref{ex:irreg}). By \eq{e:D01}, for every $x^*\in M^*=M$ and $t\in T$, we have
  \begin{align*}
\lan f_j(t),x^*\ran&=\lan\Dc_{x_j,y_j}(t),x^*\ran=
  \lan\Dc_{1,0}(t)x_j+\Dc_{0,1}(t)y_j,x^*\ran\\[3pt]
&=\Dc_{1,0}(t)\lan x_j,x^*\ran+\Dc_{0,1}(t)\lan y_j,x^*\ran=\Dc_{x_j',y_j'}(t),
  \end{align*}
where $x_j'=\lan x_j,x^*\ran$ and $y_j'=\lan y_j,x^*\ran$. Again, by \eq{e:Dirass}
and \eq{e:refi},
  \begin{equation} \label{e:xpyp}
V_\vep(\lan f_j(\cdot),x^*\ran,T)=V_\vep(\Dc_{x_j',y_j'},T)=\left\{
  \begin{tabular}{ccr}
$\!\!\infty$ & \!\!\mbox{if}\!\! & $0<\vep<\textstyle\frac12|x_j'-y_j'|$,\\[3pt]
$\!\!0$ & \!\!\mbox{if}\!\! & $\vep\ge\textstyle\frac12|x_j'-y_j'|$,
  \end{tabular}\right.
  \end{equation}
where $|x_j'-y_j'|=|\lan x_j,x^*\ran-\lan y_j,x^*\ran|\to0$ as $j\to\infty$. Hence, given
$\vep>0$,
there is $j_0=j_0(\vep,x^*)\in\Nb$ such that $|x_j'-y_j'|\le2\vep$ for all
$j\ge j_0$, and  so, \eq{e:xpyp} implies $V_\vep(\lan f_j(\cdot),x^*\ran,T)=0$ for all
$j\ge j_0$. Thus,
  \begin{equation*}
\limsup_{j\to\infty}V_\vep(\lan f_j(\cdot),x^*\ran,T)\le\sup_{j\ge j_0}
V_\vep(\lan f_j(\cdot),x^*\ran,T)=0
  \end{equation*}
(i.e., $V_\vep(\lan f_j(\cdot),x^*\ran,T)\!\to\!0$, $j\!\to\!\infty$), which yields
condition (ii) in \mbox{Theorem~\ref{t:SPw2}}.
\end{example}

\section{Irregular pointwise selection principles} \label{ss:irreg}

In what follows, we shall be dealing with double sequences of the form
$\al:\Nb\times\Nb\to[0,\infty]$ having the property that $\al(j,j)=0$ for all $j\in\Nb$
(e.g., \eq{e:spir}). The \emph{limit superior\/} of $\al(j,k)$ as $j,k\to\infty$ is defined by
  \begin{equation*}
\limsup_{j,k\to\infty}\al(j,k)=\lim_{n\to\infty}\sup\bigl\{\al(j,k):
\mbox{$j\ge n$ and $k\ge n$}\bigr\}.
  \end{equation*}
For a number $\al_0\ge0$, we say that $\al(j,k)$ \emph{converges\/} to $\al_0$ as
$j,k\to\infty$ and write $\lim_{j,k\to\infty}\al(j,k)=\al_0$ if for every $\eta>0$ there is
$J=J(\eta)\in\Nb$ such that $|\al(j,k)-\al_0|\le\eta$ for all $j\ge J$ and $k\ge J$ with
$j\ne k$.

The main result of this section is the following \emph{irregular \pw\ selection principle\/}
in terms of the approximate variation (see also Example~\ref{ex:nrg}).

\begin{theorem} \label{t:SPir}
Suppose $T\subset\Rb$, $(M,\|\cdot\|)$ is a normed linear space, and
$\{f_j\}\subset M^T$ is a \pw\ \rc\ sequence of functions such that
  \begin{equation} \label{e:spir}
\limsup_{j,k\to\infty}V_\vep(f_j-f_k,T)<\infty\quad\mbox{for all}\quad\vep>0.
  \end{equation}
Then $\{f_j\}$ contains a subsequence which converges pointwise on $T$.
\end{theorem}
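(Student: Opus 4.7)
The plan is to adapt the strategy of Theorem~\ref{t:SP}, applied to the differences $f_j-f_k$, which by hypothesis enjoy bounded $\vep$-variations on the tail of indices. First, by \eq{e:spir}, for each $n\in\Nb$ there exist $N_n\in\Nb$ and $C_n>0$ such that $V_{\vep_n}(f_j-f_k,T)\le C_n$ for all $j,k\ge N_n$ with $j\ne k$, where $\vep_n=1/n$. Passing to the subsequence $\{f_{j_p}\}$ with $j_p\ge N_p$ (and renaming it $\{f_j\}$), I may assume $V_{\vep_n}(f_j-f_k,T)\le C_n$ for all $n\in\Nb$ and $j,k\ge n$ with $j\ne k$. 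Next, fix a countable dense subset $S_0\subset T$ containing every isolated and every one-sidedly-isolated point of~$T$. By pointwise relative compactness and Cantor's diagonal procedure, pass to a further subsequence (again denoted $\{f_j\}$) such that $\{f_j(s)\}_{j\in\Nb}$ converges in $M$ for every $s\in S_0$.

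The technical core is to extract a subsequence $\{f_{j_p}\}$ along which, for every $n\in\Nb$ and $s\in S_0$, the quantity
\[
\psi_n^{j_p,j_q}(s):=V_{\vep_n}\bigl(f_{j_p}-f_{j_q},\,T\cap(-\infty,s]\bigr)\in[0,C_n]
\]
converges to a limit $\Phi_n(s)$ as $p,q\to\infty$ with $p\ne q$. Following the Ramsey-theoretic pattern that underlies Schrader's irregular principle~\cite{Schrader}, I enumerate the countably many objectives (one per triple $(n,s,r)$ with $s\in S_0$ and $r\in\Nb$): for each objective, color an unordered pair $\{p,q\}$ by which dyadic subinterval of $[0,C_n]$ of length $C_n/2^r$ contains $\psi_n^{p,q}(s)$, apply the infinite Ramsey theorem to extract an infinite monochromatic subset, and diagonalize. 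The diagonal subsequence witnesses Cauchy convergence of $\psi_n^{j_p,j_q}(s)$ to some $\Phi_n(s)\in[0,C_n]$. Each $\Phi_n$ is nondecreasing on $S_0$ and extends to a bounded nondecreasing function on $T$ with an at most countable discontinuity set $D_n$; set $S=S_0\cup\bigcup_{n=1}^\infty D_n$, still countable and dense in~$T$.

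It remains to prove $\{f_{j_p}(t)\}$ is Cauchy for each $t\in T\setminus S$. Fix such a $t$---necessarily a two-sided limit point of $S_0$---together with $\eta>0$, and pick $n$ with $\vep_n<\eta/12$. Continuity of $\Phi_n$ at $t$ and density of $S_0$ yield $s_-,s_+\in S_0$ with $s_-<t<s_+$ and $\Phi_n(s_+)-\Phi_n(s_-)<\eta/8$; for all sufficiently large $p,q$ with $p\ne q$, $\psi_n^{j_p,j_q}(s_+)-\psi_n^{j_p,j_q}(s_-)<\eta/4$, so the semi-additivity (Lemma~\ref{l:mor}) gives $V_{\vep_n}(f_{j_p}-f_{j_q},\,T\cap[s_-,s_+])<\eta/4$. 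By \eq{e:av}, choose $g\in\BV(T\cap[s_-,s_+];M)$ with $\|f_{j_p}-f_{j_q}-g\|_{\infty,T\cap[s_-,s_+]}\le\vep_n$ and $V(g,T\cap[s_-,s_+])\le\eta/4+\vep_n$; then \eq{e:10} applied at the points $t$ and $s_-$ yields
\[
\|(f_{j_p}-f_{j_q})(t)-(f_{j_p}-f_{j_q})(s_-)\|\le V(g,T\cap[s_-,s_+])+2\vep_n<3\eta/4.
\]
Combined with $\|f_{j_p}(s_-)-f_{j_q}(s_-)\|<\eta/4$ (valid for all large $p,q$ because $s_-\in S_0$), the triangle inequality gives $\|f_{j_p}(t)-f_{j_q}(t)\|<\eta$. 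Hence $\{f_{j_p}(t)\}$ is Cauchy, and since it lies in the compact closure of $\{f_j(t):j\in\Nb\}$, it converges in~$M$. The main obstacle is the Ramsey step of the second paragraph: without it, one only has control over $\limsup_{p,q}\psi_n^{j_p,j_q}(s)$, which is insufficient to bound the increments $\psi_n^{j_p,j_q}(s_+)-\psi_n^{j_p,j_q}(s_-)$ uniformly in $p,q$, whereas genuine convergence of the double-indexed variation functions is what powers the Cauchy estimate.
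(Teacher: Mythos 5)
Your proposal is correct in its essentials and follows the same strategy as the paper's proof: Ramsey's theorem plus a diagonal procedure to force genuine double-limit convergence of the nondecreasing functions $t\mapsto V_{\vep_n}(f_j-f_k,T\cap(-\infty,t])$, followed by the semi-additivity of Lemma~\ref{l:mor} and inequality \eq{e:10} to obtain the Cauchy estimate off a countable exceptional set; you also correctly identify the Ramsey step as the crux (mere control of the $\limsup$ would not bound the increments). Where you genuinely deviate --- to your advantage --- is in the analogue of Lemma~\ref{l:RT}: the paper's lemma establishes the double limit \eq{e:fek} at \emph{every} $t\in T$, which costs it two extra steps (the Saks-type monotone extension $\wt\vfi$ and a second round of extraction over the discontinuity set $T\cap D$), whereas you establish the limit $\Phi_n$ only on the countable dense set $S_0$ and then, at a point $t\in T\setminus S$, bracket $t$ by $s_-<t<s_+$ with $s_\pm\in S_0$ and bound $V_{\vep_n}(f_{j_p}-f_{j_q},T\cap[s_-,s_+])$ by the increment $\psi_n^{j_p,j_q}(s_+)-\psi_n^{j_p,j_q}(s_-)$, which needs convergence only at the two endpoints. (The paper uses a one-sided split at $s<t$ and therefore needs the limit at $t$ itself.) Colouring pairs of \emph{indices} rather than pairs of functions also lets you skip the paper's preliminary reduction to distinct $f_j$.

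One loose end: you arrange convergence of $\{f_j(s)\}$ via relative compactness only for $s\in S_0$, but your Cauchy argument covers only $t\in T\setminus S$ with $S=S_0\cup\bigcup_{n}D_n$, so the points of $T\cap\bigcup_n D_n$ lying outside $S_0$ are left uncovered. Since this set is at most countable, one further diagonal extraction, performed after $S$ is known (exactly as the paper does by fixing $S$ \emph{before} invoking relative compactness), closes the gap without disturbing anything already established. With that insertion the argument is complete.
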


In order to prove this theorem, we need a lemma.

\begin{lemma} \label{l:RT}
Suppose $\vep>0$, $C>0$, and a sequence $\{F_j\}_{j=1}^\infty\subset M^T$ of
{\sl distinct} functions are such that
  \begin{equation} \label{e:Fjk}
V_\vep(F_j-F_k,T)\le C\quad\mbox{for all}\quad j,k\in\Nb.
  \end{equation}
Then, there exist a subsequence $\{F_j^\vep\}_{j=1}^\infty$ of $\{F_j\}_{j=1}^\infty$
and a nondecreasing function $\vfi^\vep:T\to[0,C]$ such that
  \begin{equation} \label{e:fek}
\lim_{j,k\to\infty}V_\vep(F_j^\vep-F_k^\vep,T\cap(-\infty,t])=\vfi^\vep(t)\quad
\mbox{for all}\quad t\in T.
  \end{equation}
\end{lemma}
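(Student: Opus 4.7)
For $j,k \in \Nb$ and $t \in T$ I set $\al_{j,k}(t) = V_\vep(F_j - F_k, T\cap(-\infty,t])$. By Lemma~\ref{l:ele}(b) each map $t \mapsto \al_{j,k}(t)$ is nondecreasing on $T$, and \eq{e:Fjk} forces $0 \le \al_{j,k}(t) \le C$. Since $M$ is a normed space one has $V_\vep(-h,T') = V_\vep(h,T')$ for every $h \in M^T$ and $T' \subset T$, hence the crucial symmetry $\al_{j,k}(t) = \al_{k,j}(t)$; so for each fixed $t$ the values form a \emph{symmetric} bounded double sequence in $[0,C]$.

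\textbf{Ramsey step.} My plan hinges on the classical observation that any symmetric bounded double sequence $\beta(j,k) \in [0,C]$ with $\beta(j,k) = \beta(k,j)$ admits an infinite subsequence $\{j_p\}$ on which $\lim_{p,q\to\infty,\,p\ne q} \beta(j_p,j_q)$ exists. To produce it, for each $\eta > 0$ I partition $[0,C]$ into finitely many subintervals of length below $\eta$, color each unordered pair $\{p,q\}$ by the subinterval containing $\beta(p,q)$, and invoke the infinite Ramsey theorem to extract an infinite monochromatic subset on which $\beta$ oscillates by at most $\eta$. Iterating with $\eta_n \to 0$ and Cantor-diagonalizing yields a single subsequence realizing the double limit in the sense defined just before the lemma.

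\textbf{Dense diagonalization.} I would fix a countable dense subset $Q \subset T$ containing every point of $T$ which fails to be a two-sided limit point of $T$ (at most countably many such exceptional points). Enumerate $Q = \{t_n\}_{n=1}^\infty$ and apply the Ramsey step successively: extract $\{j_p^{(1)}\}$ realizing the double limit of $\al_{j,k}(t_1)$, then $\{j_p^{(2)}\} \subset \{j_p^{(1)}\}$ for $t_2$, and so on; the diagonal $\hat\jmath_p := j_p^{(p)}$ gives a single subsequence, denoted $\{F_j^\vep\}$, such that $\vfi^\vep(t) := \lim_{j,k} V_\vep(F_j^\vep - F_k^\vep, T\cap(-\infty,t])$ exists for every $t \in Q$. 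Monotonicity of $\al^\vep_{j,k}$ in $t$ passes to the limit, so $\vfi^\vep$ is nondecreasing on $Q$ with values in $[0,C]$; I extend it to $T$ by $\vfi^\vep(t) := \sup\{\vfi^\vep(s) : s \in Q,\, s \le t\}$ (equal to $0$ if the set is empty), producing a nondecreasing function $T \to [0,C]$.

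\textbf{Closing the gaps.} For arbitrary $t \in T$ and $s, s' \in Q$ with $s \le t \le s'$, Lemma~\ref{l:ele}(b) yields the sandwich $\al^\vep_{j,k}(s) \le \al^\vep_{j,k}(t) \le \al^\vep_{j,k}(s')$, whence passing to double limits gives $\vfi^\vep(s) \le \liminf_{j,k} \al^\vep_{j,k}(t) \le \limsup_{j,k} \al^\vep_{j,k}(t) \le \vfi^\vep(s')$. Letting $s \nearrow t$ and $s' \searrow t$ through $Q$ (possible thanks to the exceptional points being included in $Q$) secures the double limit at every continuity point of $\vfi^\vep$ in $T$. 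Being monotone and bounded, $\vfi^\vep$ has an at-most-countable discontinuity set $D \subset T$; a further Cantor-diagonal refinement via the Ramsey step, this time enumerating $D$, extracts a sub-subsequence (still denoted $\{F_j^\vep\}$) on which the double limit also exists at each $t \in D$, and I redefine $\vfi^\vep$ at such $t$ to equal this limit---a value which lies in $[\vfi^\vep(t-),\vfi^\vep(t+)]$ by the sandwich and therefore preserves monotonicity on $T$. The main obstacle I anticipate is the Ramsey extraction itself: it depends essentially on the sign-symmetry $V_\vep(-h,\cdot) = V_\vep(h,\cdot)$ granted by the normed-space assumption, without which a double-limit subsequence need not exist.
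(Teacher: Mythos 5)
Your proposal is correct and follows essentially the same route as the paper's proof: a Ramsey-theorem extraction of a double-limit subsequence at each fixed $t$ (the paper uses repeated bisection of $[0,C]$ with two colours where you use a fine partition with many colours at each scale $\eta_n$, but these are interchangeable), followed by diagonalization over a countable dense set augmented by the one-sidedly isolated points of $T$, a monotone extension with a sandwich argument at continuity points, and a final diagonalization over the countable discontinuity set. Your explicit observation that the symmetry $V_\vep(F_j-F_k,\cdot)=V_\vep(F_k-F_j,\cdot)$ is what makes the unordered-pair colouring well defined is a point the paper uses only implicitly.
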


Since the proof of Lemma~\ref{l:RT} is rather lengthy and involves certain ideas from
formal logic (Ramsey's Theorem \ref{t:Ramsey}), for the time being we postpone
it until the end of the proof of Theorem~\ref{t:SPir}.

\begin{proof}[Proof of Theorem~\protect\ref{t:SPir}]
First, we may assume that $T$ is \emph{uncountable}. In fact, if $T$ is (at most)
countable, then, by the relative compactness of sets $\{f_j(t):j\in\Nb\}\subset M$
for all $t\in T$, we may apply the standard diagonal procedure to extract a subsequence
of $\{f_j\}$ which converges \pw\ on~$T$. Second, we may assume that all functions
in the sequence $\{f_j\}$ are \emph{distinct}. To see this, we argue as follows. If there
are only finitely many distinct functions in $\{f_j\}$, then we may choose a constant
subsequence of $\{f_j\}$ (which is, clearly, \pw\ convergent on $T$). Otherwise,
we may pick a subsequence of $\{f_j\}$ (if necessary) consisting of distinct functions.

Given $\vep>0$, we set (cf.\ \eq{e:spir})
  \begin{equation*}
C(\vep)=1+\limsup_{j,k\to\infty}V_\vep(f_j-f_k,T)<\infty.
  \end{equation*}
So, there is $j_0(\vep)\in\Nb$ such that
  \begin{equation} \label{e:ejkC}
\mbox{$V_\vep(f_j-f_k,T)\le C(\vep)$ \,\,\,for all \,\,\,$j\ge j_0(\vep)$
\,and \,$k\ge j_0(\vep)$.}
  \end{equation}
Let $\{\vep_n\}_{n=1}^\infty\!\subset\!(0,\infty)$ be a decreasing sequence such that
$\vep_n\!\to\!0$ as $n\!\to\!\infty$.

We divide the rest of the proof into two main steps for clarity.

\emph{Step~1.} There is a subsequence of $\{f_j\}$, again denoted by $\{f_j\}$, and
for each $n\in\Nb$ there is a nondecreasing function $\vfi_n:T\to[0,C(\vep_n)]$ such that
  \begin{equation} \label{e:fint}
\lim_{j,k\to\infty}V_{\vep_n}(f_j-f_k,T\cap(-\infty,t])=\vfi_n(t)\quad\mbox{for all}
\quad t\in T.
  \end{equation}

In order to prove \eq{e:fint}, we apply Lemma~\ref{l:RT}, induction and the diagonal
procedure. Setting $\vep=\vep_1$, $C=C(\vep_1)$ and $F_j=f_{J_0(j)}$ with
$J_0(j)=j_0(\vep_1)+j-1$, $j\in\Nb$, we find that condition \eq{e:ejkC} implies
\eq{e:Fjk}, and so, by Lemma~\ref{l:RT}, there are a subsequence
$\{J_1(j)\}_{j=1}^\infty$ of $\{J_0(j)\}_{j=1}^\infty=\{j\}_{j=j_0(\vep_1)}^\infty$
and a nondecreasing function $\vfi_1=\vfi^{\vep_1}:T\to[0,C(\vep_1)]$ such that
  \begin{equation*}
\lim_{j,k\to\infty}V_{\vep_1}(f_{J_1(j)}-f_{J_1(k)},T\cap(-\infty,t])=\vfi_1(t)\quad
\mbox{for all}\quad t\in T.
  \end{equation*}
Let $j_1\in\Nb$ be the least number such that $J_1(j_1)\ge j_0(\vep_2)$. Inductively,
suppose $n\in\Nb$, $n\ge2$, and a subsequence $\{J_{n-1}(j)\}_{j=1}^\infty$ of
$\{j\}_{j=j_0(\vep_1)}^\infty$ and the number $j_{n-1}\in\Nb$ with
$J_{n-1}(j_{n-1})\ge j_0(\vep_n)$ are already chosen. To apply Lemma~\ref{l:RT}
once again, we set $\vep=\vep_n$, $C=C(\vep_n)$ and $F_j=f_{J(j)}$ with
$J(j)=J_{n-1}(j_{n-1}+j-1)$, $j\in\Nb$. Since for every $j\in\Nb$ we have
$J(j)\ge J_{n-1}(j_{n-1})\ge j_0(\vep_n)$, we get, by \eq{e:ejkC},
  \begin{equation*}
V_{\vep_n}(F_j-F_k,T)\le C(\vep_n)\quad\mbox{for all}\quad j,k\in\Nb.
  \end{equation*}
By Lemma~\ref{l:RT}, there are a subsequence $\{J_n(j)\}_{j=1}^\infty$ of the
sequence $\{J(j)\}_{j=1}^\infty$, (more explicitly) the latter being equal to
$\{J_{n-1}(j)\}_{j=j_{n-1}}^\infty$, and a nondecreasing function 
$\vfi_n=\vfi^{\vep_n}:T\to[0,C(\vep_n)]$ such that
  \begin{equation} \label{e:JJ}
\lim_{j,k\to\infty}V_{\vep_n}(f_{J_n(j)}-f_{J_n(k)},T\cap(-\infty,t])=\vfi_n(t)\quad
\mbox{for all}\quad t\in T.
  \end{equation}
We assert that the diagonal subsequence $\{f_{J_j(j)}\}_{j=1}^\infty$ of $\{f_j\}$,
again denoted by $\{f_j\}$, satisfies \eq{e:fint} for all $n\in\Nb$. In order to see this,
let us fix $n\in\Nb$ and $t\in T$. By \eq{e:JJ}, given $\eta>0$, there is a number
$J^0=J^0(\eta,n,t)\in\Nb$ such that if $j',k'\ge J^0$, $j'\ne k'$, we have
  \begin{equation} \label{e:pp}
\bigl|V_{\vep_n}(f_{J_n(j')}-f_{J_n(k')},T\cap(-\infty,t])-\vfi_n(t)\bigr|\le\eta.
  \end{equation}
Since $\{J_j(j)\}_{j=n}^\infty$ is a subsequence of $\{J_n(j)\}_{j=1}^\infty$, there is a
strictly increasing natural sequence $q:\Nb\to\Nb$ such that $J_j(j)=J_n(q(j))$ for all
$j\ge n$. Define $J^*=\max\{n,J^0\}$. Now, for arbitrary $j,k\ge J^*$, $j\ne k$, we
set $j'=q(j)$ and $k'=q(k)$. Since $j,k\ge J^*\ge n$, we find $J_j(j)=J_n(j')$ and
$J_k(k)=J_n(k')$, where $j'\ne k'$, $j'=q(j)\ge j\ge J^*\ge J^0$ and, similarly,
$k'\ge J^0$. It follows from \eq{e:pp} that
  \begin{equation*}
\bigl|V_{\vep_n}(f_{J_j(j)}-f_{J_k(k)},T\cap(-\infty,t])-\vfi_n(t)\bigr|\le\eta.
  \end{equation*}
which proves our assertion.

\emph{Step~2.} Let $Q$ denote an at most countable dense subset of $T$. Clearly,
$Q$ contains every point of $T$ which is not a limit point for $T$. Since, for any $n\in\Nb$,
the function $\vfi_n$ from \eq{e:fint} is nondecreasing on $T$, the set $Q_n\subset T$
of its points of discontinuity is at most countable. We set
$S=Q\cup\bigcup_{n=1}^\infty Q_n$. The set $S$ is an at most countable dense
subset of $T$ and has the property:
  \begin{equation} \label{e:TmS}
\mbox{for each $n\in\Nb$, the function $\vfi_n$ is continuous on $T\setminus S$.}
  \end{equation}
By the relative compactness of the set $\{f_j(t):j\in\Nb\}$ for all $t\in T$ and at most
countability of $S\subset T$, we may assume (applying the diagonal procedure and
passing to a subsequence of $\{f_j\}$ if necessary) that, for every $s\in S$, $f_j(s)$
converges in $M$ as $j\to\infty$ to a point of $M$ denoted by $f(s)$ (hence $f:S\to M$).

It remains to show that the sequence $\{f_j(t)\}_{j=1}^\infty$ is Cauchy in $M$ for
every $t\in T\setminus S$. In fact, this and the relative compactness of
$\{f_j(t):j\in\Nb\}$ imply the convergence of $f_j(t)$ as $j\to\infty$ to a point of $M$
denoted by $f(t)$. In other words, $f_j$ converges \pw\ on $T$ to the function
\mbox{$f:T=S\cup(T\setminus S)\to M$}.

Let $t\in T\setminus S$ and $\eta>0$ be arbitrary. Since $\vep_n\to0$ as $n\to\infty$,
choose and fix $n=n(\eta)\in\Nb$ such that $\vep_n\le\eta$. The definition of $S$ implies
that $t$ is a limit point for $T$ and a point of continuity of $\vfi_n$, and so, by the
density of $S$ in $T$, there is $s=s(n,t)\in S$ such that $|\vfi_n(t)-\vfi_n(s)|\le\eta$.
Property \eq{e:fint} yields the existence of $j^1=j^1(\eta,n,t,s)\in\Nb$ such that if
$j,k\ge j^1$, $j\ne k$,
  \begin{equation*}
\bigl|V_{\vep_n}(f_j\!-\!f_k,T\cap(-\infty,\tau])\!-\!\vfi_n(\tau)\bigr|\le\eta
\quad\mbox{for}\quad\mbox{$\tau=t$ \,and \,$\tau=s$.}
  \end{equation*}
Suppose $s<t$ (the case when $s>t$ is treated similarly). Applying Lemma~\ref{l:mor}
(with $T$ replaced by $T\cap(-\infty,t]$, $T_1$---by $T\cap(-\infty,s]$, and
$T_2$---by $T\cap[s,t]$), we get
  \begin{align*}
V_{\vep_n}(f_j\!-\!f_k,T\cap[s,t])&\le V_{\vep_n}(f_j\!-\!f_k,T\cap(-\infty,t])
  -V_{\vep_n}(f_j\!-\!f_k,T\cap(-\infty,s])\\[3pt]
&\le|V_{\vep_n}(f_j\!-\!f_k,T\cap(-\infty,t])\!-\!\vfi_n(t)|+|\vfi_n(t)\!-\!\vfi_n(s)|\\[3pt]
&\qquad+|\vfi_n(s)\!-\!V_{\vep_n}(f_j\!-\!f_k,T\cap(-\infty,s])|\\[3pt]
&\le\eta+\eta+\eta=3\eta\quad\mbox{for all}\quad j,k\ge j^1\,\,\mbox{with}\,\, j\ne k. 
  \end{align*}
Now, given $j,k\ge j^1$, $j\ne k$, by the definition of $V_{\vep_n}(f_j-f_k,T\cap[s,t])$,
there is $g_{j,k}\in\BV(T\cap[s,t];M)$, also depending on $\eta$, $n$, $t$ and $s$,
such that
  \begin{equation*}
\|(f_j-f_k)-g_{j,k}\|_{\infty,\,T\cap[s,t]}\le\vep_n
  \end{equation*}
and
  \begin{equation*}
V(g_{j,k},T\cap[s,t])\le V_{\vep_n}(f_j-f_k,T\cap[s,t])+\eta.
  \end{equation*}
\par\medbreak\noindent
These inequalities and \eq{e:10} imply, for all $j,k\ge j^1$ with $j\ne k$,
  \begin{align*}
\|(f_j\!-\!f_k)(s)\!-\!(f_j\!-\!f_k)(t)\|&\le\|g_{j,k}(s)\!-\!g_{j,k}(t)\|
  +2\|(f_j\!-\!f_k)\!-\!g_{j,k}\|_{\infty,\,T\cap[s,t]}\\[3pt]
&\le V(g_{j,k},T\cap[s,t])+2\vep_n\le(3\eta+\eta)+2\eta=6\eta.
  \end{align*}
Since the sequence $\{f_j(s)\}_{j=1}^\infty$ is convergent in $M$, it is Cauchy, and so,
there is $j^2=j^2(\eta,s)\in\Nb$ such that $\|f_j(s)-f_k(s)\|\le\eta$ for all $j,k\ge j^2$.
It follows that $j^3=\max\{j^1,j^2\}$ depends only on $\eta$ (and $t$), and we have
  \begin{align*}
\|f_j(t)-f_k(t)\|&\le\|(f_j-f_k)(t)-(f_j-f_k)(s)\|+\|(f_j-f_k)(s)\|\\[3pt]
&\le6\eta+\eta=7\eta\quad\mbox{for all}\quad j,k\ge j^3.
  \end{align*}
Thus, $\{f_j(t)\}_{j=1}^\infty$ is a Cauchy sequence in $M$, which completes the proof.
\end{proof}

Various remarks and examples concerning Theorem~\ref{t:SPir} follow after the proof
of Lemma~\ref{l:RT}.

Now we turn to the proof of Lemma~\ref{l:RT}. We need Ramsey's Theorem from
formal logic \cite[Theorem~A]{Ramsey}, which we are going to recall now.

Let $\Gamma$ be a set, $n\in\Nb$, and $\gamma_1,\gamma_2,\dots,\gamma_n$ be
(pairwise) distinct elements of $\Gamma$. The (non-ordered) collection
$\{\gamma_1,\gamma_2,\dots,\gamma_n\}$ is said to be an \emph{$n$-combination\/}
of elements of $\Gamma$ (note that an $n$-combination may be generated by $n!$
different injective functions $\gamma:\{1,2,\dots,n\}\to\Gamma$ with
$\gamma_i=\gamma(i)$ for all $i=1,2,\dots,n$). We denote by $\Gamma[n]$
the family of all $n$-combinations of elements of~$\Gamma$.

\begin{theorem}[Ramsey \cite{Ramsey}] \label{t:Ramsey}
Suppose $\Gamma$ is an infinite set, $n,m\in\Nb$, and
$\Gamma[n]=\bigcup_{i=1}^mG_i$ is a {\sl disjoint} union of $m$ nonempty sets
$G_i\subset\Gamma[n]$. Then, under the Axiom of Choice, there are an infinite
set $\Delta\subset\Gamma$ and $i_0\in\{1,2,\dots,m\}$ such that
$\Delta[n]\subset G_{i_0}$.
\end{theorem}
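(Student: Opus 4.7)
The plan is to prove Ramsey's Theorem by induction on $n$, the size of the combinations, with $m$ allowed to vary freely. The base case $n=1$ reduces to the pigeonhole principle: since $\Gamma[1]$ can be identified with $\Gamma$ itself, a partition $\Gamma=\bigcup_{i=1}^m G_i$ of an infinite set into finitely many classes forces at least one class $G_{i_0}$ to be infinite, so setting $\Delta=G_{i_0}$ yields $\Delta[1]\subset G_{i_0}$.

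For the inductive step, assuming the theorem holds for $n-1$ (and every $m\in\Nb$), I would build, by recursion, a sequence of distinct elements $\gamma_1,\gamma_2,\dots\in\Gamma$, a nested sequence of infinite sets $\Gamma\supset\Gamma_1\supset\Gamma_2\supset\dots$, and a sequence of indices $i_1,i_2,\dots\in\{1,\dots,m\}$ as follows. Pick $\gamma_1\in\Gamma$ arbitrarily and let $\Gamma_0=\Gamma\setminus\{\gamma_1\}$. To every $(n-1)$-combination $C\in\Gamma_0[n-1]$ associate the unique index $\iota(C)\in\{1,\dots,m\}$ such that $\{\gamma_1\}\cup C\in G_{\iota(C)}$; this partitions $\Gamma_0[n-1]$ into at most $m$ classes. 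Applying the inductive hypothesis (with $\Gamma_0$ in place of $\Gamma$ and $n-1$ in place of $n$) gives an infinite $\Gamma_1\subset\Gamma_0$ and an index $i_1$ with $\iota(C)=i_1$ for every $C\in\Gamma_1[n-1]$, so that $\{\gamma_1\}\cup C\in G_{i_1}$ whenever $C\in\Gamma_1[n-1]$. Repeat: pick $\gamma_2\in\Gamma_1$, apply the inductive hypothesis inside $\Gamma_1\setminus\{\gamma_2\}$ to obtain an infinite $\Gamma_2$ and an index $i_2$ with $\{\gamma_2\}\cup C\in G_{i_2}$ for every $C\in\Gamma_2[n-1]$, and so on. The Axiom of Choice is invoked here to perform these countably many selections simultaneously.

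To conclude, I would apply the pigeonhole principle to the infinite sequence $\{i_k\}_{k=1}^\infty\subset\{1,\dots,m\}$: some value $i_0$ occurs for an infinite set of indices $K\subset\Nb$. Let $\Delta=\{\gamma_k:k\in K\}$, which is infinite by construction since all the $\gamma_k$ are distinct. For an arbitrary $n$-combination $\{\gamma_{k_1},\dots,\gamma_{k_n}\}\in\Delta[n]$ with $k_1<k_2<\dots<k_n$ and $k_1,\dots,k_n\in K$, the elements $\gamma_{k_2},\dots,\gamma_{k_n}$ all belong to $\Gamma_{k_1}$ (since $\Gamma_{k_1}\supset\Gamma_{k_2}\supset\dots$ and each $\gamma_{k_j}\in\Gamma_{k_j-1}\subset\Gamma_{k_1}$ for $j\ge 2$), hence $C=\{\gamma_{k_2},\dots,\gamma_{k_n}\}\in\Gamma_{k_1}[n-1]$, and the defining property of $\Gamma_{k_1}$ gives $\{\gamma_{k_1}\}\cup C\in G_{i_{k_1}}=G_{i_0}$. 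Therefore $\Delta[n]\subset G_{i_0}$, completing the induction.

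The main obstacle is the careful bookkeeping of the recursive construction together with the verification that membership in $\Delta[n]$ entails membership in $G_{i_0}$; the crucial point is that the index $i_{k_1}$ attached to the \emph{smallest} element of the $n$-combination is the one that governs the class of the whole combination, which forces the final pigeonhole step to be taken on the sequence $\{i_k\}$ and $\Delta$ to be extracted accordingly. A secondary point worth stating explicitly is that the inductive hypothesis must be formulated uniformly in $m$ (it is, as written), because at each recursive application the ambient partition may have as many as $m$ classes.
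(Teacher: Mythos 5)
Your proof is correct. Note that the paper contains no proof of this statement to compare against: it is quoted verbatim as Theorem~A of Ramsey's 1930 paper \cite{Ramsey}. What you have reconstructed is the classical argument (essentially Ramsey's own, and the one in standard texts): induction on $n$ uniformly in $m$, with the base case by pigeonhole, the inductive step by the nested recursion $\Gamma\supset\Gamma_1\supset\Gamma_2\supset\cdots$ stabilizing, on $\Gamma_k[n-1]$, the class of any $n$-combination whose distinguished (smallest-index) element is $\gamma_k$, and a final pigeonhole on the indices $i_k$. The bookkeeping you flag as the main obstacle is indeed the crux, and your verification goes through: for $j\ge2$ one has $\gamma_{k_j}\in\Gamma_{k_j-1}\subset\Gamma_{k_1}$ since $k_j-1\ge k_1$, while $\gamma_{k_1}\notin\Gamma_{k_1}$ (because $\Gamma_{k_1}\subset\Gamma_{k_1-1}\setminus\{\gamma_{k_1}\}$) guarantees both that $C\in\Gamma_{k_1}[n-1]$ avoids $\gamma_{k_1}$ and that $\{\gamma_{k_1}\}\cup C$ is a genuine $n$-combination. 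Two cosmetic points you could make explicit: the induced partition of $\Gamma_0[n-1]$ by $\iota$ may have empty classes, whereas the statement being induced upon assumes the $G_i$ nonempty, so one should discard empty classes before invoking the hypothesis (at least one class is nonempty since $\Gamma_0$ is infinite and $n-1\ge1$); and the recursion uses countable dependent choice rather than full AC, which is of course supplied by the Axiom of Choice assumed in the statement.
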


This theorem will be applied several times in the proof of Lemma~\ref{l:RT} with $\Gamma$
a subset of $\{F_j:j\in\Nb\}$ and $n=m=2$.

The application of Ramsey's Theorem in the context of pointwise selection principles was
initiated by Schrader \cite{Schrader} and later on was extended by several authors
(Di Piazza and Maniscalco \cite{Piazza}, Maniscalco \cite{Manisc}, Chistyakov and
Maniscalco \cite{JMAA08}, Chistyakov, Maniscalco and Tretyachenko \cite{waterman80},
Chistyakov and Tretyachenko \cite{JMAA13}) for real- and metric space-valued functions
of one and several real variables.

\begin{proof}[Proof of Lemma~\protect\ref{l:RT}]
We divide the proof into three steps.

\emph{Step~1.} Let us show that for every $t\in T$ there is a subsequence
$\{F_j^{(t)}\}_{j=1}^\infty$ of $\{F_j\}_{j=1}^\infty$, depending on $t$ and $\vep$,
such that the double limit
  \begin{equation} \label{e:loC}
\lim_{j,k\to\infty}V_\vep(F_j^{(t)}-F_k^{(t)},T\cap(-\infty,t])\quad\mbox{exists in}
\quad [0,C]
  \end{equation}
(clearly, the sequence $\{F_j^{(t)}\}_{j=1}^\infty$ satisfies the uniform estimate
\eq{e:Fjk}).

Given $t\in T$, for the sake brevity, we set $T_t^-=T\cap(-\infty,t]$.
By Lemma~\ref{l:ele}(b) and \eq{e:Fjk}, we have
  \begin{equation*}
0\le V_\vep(F_j-F_k,T_t^-)\le V_\vep(F_j-F_k,T)\le C\quad\mbox{for all}\quad j,k\in\Nb.
  \end{equation*}
In order to apply Theorem~\ref{t:Ramsey}, we set $\Gamma=\{F_j:j\in\Nb\}$,
$c_0=C/2$, and denote by $G_1$ the set of those pairs $\{F_j,F_k\}$ with $j,k\in\Nb$,
$j\ne k$, for which $V_\vep(F_j-F_k,T_t^-)\in[0,c_0)$, and by $G_2$---the set of all
pairs $\{F_j,F_k\}$ with $j,k\in\Nb$, $j\ne k$, such that
$V_\vep(F_j-F_k,T_t^-)\in[c_0,C]$. Clearly, $\Gamma[2]=G_1\cup G_2$ and
$G_1\cap G_2=\es$. If $G_1$ and $G_2$ are both nonempty, then, by
Theorem~\ref{t:Ramsey}, there is a subsequence $\{F_j^1\}_{j=1}^\infty$ of
$\{F_j\}_{j=1}^\infty$ (cf.\ Remark~\ref{r:Fj1}) \label{p:Fj1} such that either
\par(i${}_1$)\, $\{F_j^1,F_k^1\}\in G_1$ for all $j,k\in\Nb$, $j\ne k$, or
\par(ii${}_1$) $\{F_j^1,F_k^1\}\in G_2$ for all $j,k\in\Nb$, $j\ne k$.

In the case when $G_1\ne\es$ and (i${}_1$) holds, or $G_2=\es$, we set
$[a_1,b_1]=[0,c_0]$, while if $G_2\ne \es$ and (ii${}_1$) holds, or $G_1=\es$,
we set $[a_1,b_1]=[c_0,C]$.

Inductively, assume that $p\in\Nb$, $p\ge2$, and a subsequence
$\{F_j^{p-1}\}_{j=1}^\infty$ of $\{F_j\}_{j=1}^\infty$ and an interval
$[a_{p-1},b_{p-1}]\subset[0,C]$ such that
  \begin{equation*}
V_\vep(F_j^{p-1}-F_k^{p-1},T_t^-)\in[a_{p-1},b_{p-1}]\quad\mbox{for all}
\quad j,k\in\Nb,\,\,j\ne k,
  \end{equation*}
are already chosen. To apply Theorem~\ref{t:Ramsey}, we set
$\Gamma=\{F_j^{p-1}:j\in\Nb\}$, define $c_{p-1}=\frac12(a_{p-1}+b_{p-1})$, and
denote by $G_1$ the set of all pairs $\{F_j^{p-1},F_k^{p-1}\}$ with $j,k\in\Nb$, $j\ne k$,
such that $V_\vep(F_j^{p-1}-F_k^{p-1},T_t^-)\in[a_{p-1},c_{p-1})$, and by
$G_2$---the set of all pairs $\{F_j^{p-1},F_k^{p-1}\}$ with $j,k\in\Nb$, $j\ne k$,
for which $V_\vep(F_j^{p-1}-F_k^{p-1},T_t^-)\in[c_{p-1},b_{p-1}]$. We have the union
$\Gamma[2]=G_1\cup G_2$ of disjoint sets. If $G_1$ and $G_2$ are both nonempty,
then, by Ramsey's Theorem, there is a subsequence $\{F_j^p\}_{j=1}^\infty$ of
$\{F_j^{p-1}\}_{j=1}^\infty$ such that either
\par(i${}_p$)\, $\{F_j^p,F_k^p\}\in G_1$ for all $j,k\in\Nb$, $j\ne k$, or
\par(ii${}_p$) $\{F_j^p,F_k^p\}\in G_2$ for all $j,k\in\Nb$, $j\ne k$.
\par\noindent
If $G_1\ne\es$ and (i${}_p$) holds, or $G_2=\es$, we set
$[a_p,b_p]=[a_{p-1},c_{p-1}]$, while if $G_2\ne\es$ and (ii${}_p$) holds, or
$G_1=\es$, we set $[a_p,b_p]=[c_{p-1},b_{p-1}]$.

In this way for each $p\in\Nb$ we have nested intervals
$[a_p,b_p]\subset[a_{p-1},b_{p-1}]$ in $[a_0,b_0]=[0,C]$ with
$b_p-a_p=C/2^p$ and a subsequence $\{F_j^p\}_{j=1}^\infty$ of
$\{F_j^{p-1}\}_{j=1}^\infty$ (where $F_j^0=F_j$, $j\in\Nb$) such that
  \begin{equation*}
V_\vep(F_j^p-F_k^p,T_t^-)\in[a_p,b_p]\quad\mbox{for all}\quad j,k\in\Nb,\,\,j\ne k.
  \end{equation*}
Let $\ell\in[0,C]$ be the common limit of $a_p$ and $b_p$ as $p\to\infty$ (note that
$\ell$ depends on $t$ and $\vep$). Denoting the diagonal sequence
$\{F_j^j\}_{j=1}^\infty$ by $\{F_j^{(t)}\}_{j=1}^\infty$ we infer that the limit
in \eq{e:loC} is equal to~$\ell$. In fact, given $\eta>0$, there is $p(\eta)\in\Nb$
such that $a_{p(\eta)},b_{p(\eta)}\in[\ell-\eta,\ell+\eta]$ and, since
$\{F_j^{(t)}\}_{j=p(\eta)}^\infty$ is a subsequence of
$\{F_j^{p(\eta)}\}_{j=1}^\infty$, we find, for all $j,k\ge p(\eta)$ with $j\ne k$, that
  \begin{equation*}
V_\vep(F_j^{(t)}-F_k^{(t)},T_t^-)\in[a_{p(\eta)},b_{p(\eta)}]\subset[\ell-\eta,\ell+\eta].
  \end{equation*}

\emph{Step~2.} Given a set $A\subset\Rb$, we denote by $\ov A$ its closure in $\Rb$.

Let $Q$ be an at most countable dense subset of $T$ (hence $Q\subset T\subset\ov Q$).
The set $T_L=\{t\in T:\mbox{$T\cap(t-\delta,t)=\es$ for some $\delta>0$}\}$ of
points from $T$, which are isolated from the left for $T$, is at most countable, and
the same is true for the set $T_R=\{t\in T:\mbox{$T\cap(t,t+\delta)=\es$ for some
$\delta>0$}\}$ of points from $T$ isolated from the right for $T$. Clearly,
$T_L\cap T_R\subset Q$, and the set $Z=Q\cup T_L\cup T_R$ is an at most
countable dense subset of~$T$.

We assert that there are a subsequence $\{F_j^*\}_{j=1}^\infty$ of
$\{F_j\}_{j=1}^\infty$ and a nondecreasing function $\vfi:Z\to[0,C]$ (both depending
on $\vep$) such that
  \begin{equation} \label{e:fiz}
\lim_{j,k\to\infty}V_\vep(F_j^*-F_k^*,T\cap(-\infty,s])=\vfi(s)\quad\mbox{for all}
\quad s\in Z.
  \end{equation}
With no loss of generality, we may assume that $Z=\{s_p\}_{p=1}^\infty$. By Step~1,
there are a subsequence $\{F_j^{(s_1)}\}_{j=1}^\infty$ of $\{F_j\}_{j=1}^\infty$,
denoted by $\{F_j^{(1)}\}_{j=1}^\infty$, and a number from $[0,C]$, denoted by
$\vfi(s_1)$, such that
  \begin{equation*}
\lim_{j,k\to\infty}V_\vep(F_j^{(1)}-F_k^{(1)},T\cap(-\infty,s_1])=\vfi(s_1).
  \end{equation*}
Inductively, if $p\in\Nb$, $p\ge2$, and a subsequence $\{F_j^{(p-1)}\}_{j=1}^\infty$ of
$\{F_j\}_{j=1}^\infty$ is already chosen, we apply Step~1 once again to pick a
subsequence $\{F_j^{(p)}\}_{j=1}^\infty$ of $\{F_j^{(p-1)}\}_{j=1}^\infty$ and a
number $\vfi(s_p)\in[0,C]$ such that
  \begin{equation*}
\lim_{j,k\to\infty}V_\vep(F_j^{(p)}-F_k^{(p)},T\cap(-\infty,s_p])=\vfi(s_p).
  \end{equation*}
Denoting by $\{F_j^*\}_{j=1}^\infty$ the diagonal subsequence $\{F_j^{(j)}\}_{j=1}^%
\infty$of $\{F_j\}_{j=1}^\infty$, we establish \eq{e:fiz}. It remains to note that, by
Lemma~\ref{l:ele}(b), the function $\vfi:Z\!\to\![0,C]$, defined by the left-hand side of
\eq{e:fiz}, is nondecreasing~on~$Z$.

\emph{Step~3.} In this step, we finish the proof of \eq{e:fek}. Applying Saks' idea
\cite[Chapter~7, Section~4, Lemma~(4.1)]{Saks}, we extend the function $\vfi$,
defined by \eq{e:fiz}, from the set $Z$ to the whole $\Rb$ as follows: given $t\in\Rb$,
  \begin{equation*}
\wt\vfi(t)=\sup\{\vfi(s):s\in Z\cap(-\infty,t]\}\quad\mbox{if}\quad Z\cap(-\infty,t]\ne\es
  \end{equation*}
and
  \begin{equation*}
\wt\vfi(t)=\inf\{\vfi(s):s\in Z\}\quad\mbox{otherwise.}
  \end{equation*}
Clearly, $\wt\vfi:\Rb\to[0,\infty)$ is nondecreasing and $\wt\vfi(\Rb)\subset\ov{\vfi(Z)}%
\subset[0,C]$. Therefore, the set $D\subset\Rb$ of points of discontinuity of $\wt\vfi$
is at most countable.

Let us show that if $\{F_j^*\}_{j=1}^\infty$ is the sequence from \eq{e:fiz}, then
  \begin{equation} \label{e:tiw}
\lim_{j,k\to\infty}V_\vep(F_j^*-F_k^*,T\cap(-\infty,t])=\wt\vfi(t)\quad
\mbox{for all}\quad t\in T\setminus D.
  \end{equation}

By virtue of \eq{e:fiz}, we may assume that $t\in T\setminus(D\cup Z)$. Let $\eta>0$ be
fixed. Since $t$ is a point of continuity of $\wt\vfi$, there is $\delta=%
\delta(\eta)>0$ such that
  \begin{equation} \label{e:stw}
\mbox{$\wt\vfi(s)\in[\wt\vfi(t)-\eta,\wt\vfi(t)+\eta]$ \,for all \,$s\in\Rb$ \,such that
\,$|s-t|\le\delta$.}
  \end{equation}
Since $T\subset\ov Z$ and $t\notin T_L$, we find $\ov Z\cap(t-\delta,t)\supset%
T\cap(t-\delta,t)\ne\es$, and so, there is $s'\in Z$ with $t-\delta<s'<t$.
By \eq{e:fiz}, there is $j^1=j^1(\eta)\in\Nb$ such that, for all $j,k\ge j^1$, $j\ne k$,
  \begin{equation} \label{e:juke}
\mbox{$V_\vep(F_j^*-F_k^*,T\cap(-\infty,s'])\in[\vfi(s')-\eta,\vfi(s')+\eta]$.}
  \end{equation}
Similarly, $t\notin T_R$ implies the existence of $s''\in Z$ with $t<s''<t+\delta$, and so,
by \eq{e:fiz}, for some $j^2=j^2(\eta)\in\Nb$, we have, for all $j,k\ge j^2$, $j\ne k$,
  \begin{equation} \label{e:keju}
\mbox{$V_\vep(F_j^*-F_k^*,T\cap(-\infty,s''])\in[\vfi(s'')-\eta,\vfi(s'')+\eta]$.}
  \end{equation}
Since $s'<t<s''$, $T\cap(-\infty,s']\subset T\cap(-\infty,t]\subset T\cap(-\infty,s'']$,
and so, by Lemma~\ref{l:ele}(b), we get, for all $j,k\in\Nb$,
  \begin{equation*}
V_\vep(F_j^*-F_k^*,T\cap(-\infty,s'])\!\le\! V_\vep(F_j^*-F_k^*,T\cap(-\infty,t])
\!\le\! V_\vep(F_j^*-F_k^*,T\cap(-\infty,s'']).
  \end{equation*}
Setting $j^3=\max\{j^1,j^2\}$ and noting that $\wt\vfi(s')=\vfi(s')$ and
$\wt\vfi(s'')=\vfi(s'')$, we find, from \eq{e:juke}, \eq{e:keju} and \eq{e:stw}, that
  \begin{align*}
V_\vep(F_j^*\!-\!F_k^*,T\!\cap\!(-\infty,t])&\!\in\!
\bigl[V_\vep(F_j^*\!-\!F_k^*,T\!\cap\!(-\infty,s']),
V_\vep(F_j^*\!-\!F_k^*,T\!\cap\!(-\infty,s''])\bigr]\\[3pt]
&\!\subset\![\vfi(s')-\eta,\vfi(s'')+\eta]=[\wt\vfi(s')-\eta,\wt\vfi(s'')+\eta]\\[3pt]
&\!\subset\![\wt\vfi(t)-2\eta,\wt\vfi(t)+2\eta]\quad\mbox{for all}
  \quad j,k\ge j^3,\,\,j\ne k,
  \end{align*}
which proves \eq{e:tiw}.

Finally, we note that $T=(T\setminus D)\cup(T\cap D)$ where $T\cap D$ is at most
countable. Furthermore, being a subsequence of the original sequence
$\{F_j\}_{j=1}^\infty$, the sequence $\{F_j^*\}_{j=1}^\infty$ from \eq{e:fiz} and
\eq{e:tiw} satisfies the uniform estimate \eq{e:Fjk}. So, arguing as in Step~2 with $Z$
replaced by $T\cap D$, we obtain a subsequence of $\{F_j^*\}_{j=1}^\infty$, denoted
by $\{F_j^\vep\}_{j=1}^\infty$, and a nondecreasing function $\psi:T\cap D\to[0,C]$
such that
  \begin{equation} \label{e:psizh}
\lim_{j,k\to\infty}V_\vep(F_j^\vep-F_k^\vep,T\cap(-\infty,t])=\psi(t)\quad
\mbox{for all}\quad t\in T\cap D.
  \end{equation}
We define the desired function $\vfi^\vep:T\to[0,C]$ by $\vfi^\vep(t)=\wt\vfi(t)$ if
$t\in T\setminus D$ and $\vfi^\vep(t)=\psi(t)$ if $t\in T\cap D$. Now, it follows from
\eq{e:tiw} and \eq{e:psizh} that equality \eq{e:fek} holds, where, in view of
Lemma~\ref{l:ele}(b), the function $\vfi^\vep$ is nondecreasing on~$T$.

This completes the proof of Lemma~\ref{l:RT}.
\end{proof}

\begin{remark} \label{r:Fj1} \rm
Here we present more details on the existence of the subsequence
$\{F_j^1\}_{j=1}^\infty$ of $\{F_j\}_{j=1}^\infty$ after the first application of
Ramsey's Theorem (cf.~p.~\pageref{p:Fj1}). By Theorem~\ref{t:Ramsey},
there is an infinite set $\Delta\subset\Gamma=\{F_j:j\in\Nb\}$ such that either
$\Delta[2]\subset G_1$ or $\Delta[2]\subset G_2$. We infer that
  \begin{equation} \label{e:Delt}
\mbox{$\Delta\!=\!\{F_{q(n)}\!:n\!\in\!\Nb\}$ for some strictly increasing
sequence $q:\Nb\!\to\!\Nb$,}
  \end{equation}
and, setting $F_j^1\!=\!F_{q(j)}$ for $j\!\in\!\Nb$, we have
$\Delta[2]\!=\!\bigl\{\{F_j^1,F_k^1\}:j,k\!\in\!\Nb,\,j\!\ne\! k\bigr\}$.

Since the set $\Nb$ of natural numbers is well-ordered (i.e., every nonempty subset
of $\Nb$ has the minimal element), the sequence $q:\Nb\to\Nb$ can be defined as follows:
$q(1)=\min\{j\in\Nb:F_j\in\Delta\}$, and, inductively, if $n\in\Nb$, $n\ge2$, and
natural numbers $q(1)\!<\!q(2)\!<\!\dots\!<\!q(n-1)$ are already defined, we~set
  \begin{equation} \label{e:qn}
q(n)=\min\bigl\{j\in\Nb\setminus\{q(1),q(2),\dots,q(n\!-\!1)\}:F_j\in\Delta\bigr\}.
  \end{equation}
The sequence $q$ is strictly increasing: if $n\in\Nb$ and
$j\in\Nb\setminus\{q(1),q(2),\dots,q(n)\}$
is such that $F_j\in\Delta$, then $j\ne q(n)$, and since
$j\in\Nb\setminus\{q(1),q(2),\dots,q(n\!-\!1)\}$,
we have, by \eq{e:qn}, $j\ge q(n)$, i.e., $j>q(n)$; by the arbitrariness of $j$ as
above and \eq{e:qn} (for $n+1$ in place of $n$), we get $q(n+1)>q(n)$.
Clearly, $q(n)\ge n$.

Let us verify the equality in \eq{e:Delt}. The inclusion ($\supset$) is clear from
\eq{e:qn}. To see that inclusion ($\subset$) holds, let $F\in\Delta$, so that
$\Delta\subset\Gamma$ implies $F=F_{j_0}$ for some $j_0\in\Nb$. We have
$q(j_0)\ge j_0$, and since $F_{j_0}\in\Delta$, $j_0\ge q(1)$. Hence
$q(1)\le j_0\le q(j_0)$. We claim that there is $1\le n_0\le j_0$ such that $q(n_0)=j_0$
(this implies $F=F_{j_0}=F_{q(n_0)}\in\{F_{q(n)}:n\in\Nb\}$ and establishes~($\subset$)).
By contradiction, if $q(n)\ne j_0$ for all $n=1,2,\dots,j_0$, then $j_0$ belongs to the set
$\{j\in\Nb\setminus\{q(1),q(2),\dots,q(j_0)\}:F_j\in\Delta\}$, and so, by \eq{e:qn},
$q(j_0+1)\le j_0$, which contradicts $q(j_0+1)>q(j_0)\ge j_0$.
\end{remark}
 
\begin{remark}
If $(M,\|\cdot\|)$ is a \emph{finite-dimensional\/} normed linear space, the condition of
relative compactness of sets $\{f_j(t):j\in\Nb\}$ at all points $t\in T$ in Theorem~%
\ref{t:SPir} can be lightened to the condition $\sup_{j\in\Nb}\|f_j(t_0)\|\equiv C_0%
<\infty$ for some $t_0\in T$. In fact, by Lemma~\ref{l:71}(b) and \eq{e:ejkC} with
fixed $\vep_0>0$ and $j_0\equiv j_0(\vep_0)$, we get
  \begin{equation*}
|(f_j-f_{j_0})(T)|\le V_{\vep_0}(f_j-f_{j_0},T)+2\vep_0\le C(\vep_0)+2\vep_0\,\,\,
\mbox{for all}\,\,\,j\ge j_0.
  \end{equation*}
Hence, given $t\in T$, we find
  \begin{align*}
\|f_j(t)\|&\le\|(f_j-f_{j_0})(t)-(f_j-f_{j_0})(t_0)\|+\|f_{j_0}(t)\|+\|f_j(t_0)\|
  +\|f_{j_0}(t_0)\|\\[3pt]
&\le(C(\vep_0)+2\vep_0)+\|f_{j_0}(t)\|+2C_0\quad\mbox{for all}\quad j\ge j_0,
  \end{align*}
and so, the set $\{f_j(t):j\in\Nb\}$ is \rc\ in $M$.
\end{remark}

\begin{remark}
Under the assumptions on $T$ and $M$ from Theorem~\ref{t:SPir}, if a sequence
$\{f_j\}\subset M^T$ converges \emph{uniformly\/} on $T$ to a function $f\in M^T$,
then
  \begin{equation} \label{e:fjik}
\lim_{j,k\to\infty}V_\vep(f_j-f_k,T)=0\quad\mbox{for all}\quad\vep>0,
  \end{equation}
i.e., condition \eq{e:spir} is \emph{necessary}. In fact, given $\vep>0$, there is
$j_0=j_0(\vep)\in\Nb$ such that $\|f_j-f_k\|_{\infty,T}\le\vep$ for all $j,k\ge j_0(\vep)$.
Since the zero function $0$ on $T$ is constant, we get $V_\vep(f_j-f_k,T)\le V(0,T)=0$
for all $j,k\ge j_0(\vep)$.
\end{remark}

\begin{remark}
In Example \ref{ex:ntns}, we show that condition \eq{e:spir} is \emph{not necessary\/}
for the pointwise convergence of $\{f_j\}$ to $f$. However, it is `almost necessary' in
the following sense (cf.\ Remark~\ref{r:neces}(b)). Let $T\subset\Rb$ be a measurable
set with \emph{finite\/} Lebesgue measure $\mathcal{L}(T)$ and $\{f_j\}\subset M^T$
be a sequence of measurable functions which converges \pw\ or almost everywhere
on $T$ to a function $f\in M^T$. Egorov's Theorem implies that for every $\eta>0$
there is a measurable set $T_\eta\subset T$ such that $\mathcal{L}(T\setminus T_\eta)%
\le\eta$ and $f_j\rra f$ on $T_\eta$. By \eq{e:fjik}, we get
  \begin{equation*}
\lim_{j,k\to\infty}V_\vep(f_j-f_k,T_\eta)=0\quad\mbox{for all}\quad\vep>0.
  \end{equation*}
\end{remark}

Applying Theorem~\ref{t:SPir} and the diagonal procedure we get the following

\begin{theorem} \label{t:SPirvar}
Under the assumptions of Theorem\/~{\rm\ref{t:SPir}}, if a sequence of functions
$\{f_j\}\subset M^T$ is such that, for all $\vep>0$,
  \begin{equation*}
\mbox{$\D\limsup_{j,k\to\infty}V_\vep(f_j-f_k,T\setminus E)<\infty$ for an at most
countable $E\subset T$}
  \end{equation*}
or
  \begin{equation*}
\mbox{$\D\limsup_{j,k\to\infty}V_\vep(f_j-f_k,T\cap[a,b])<\infty$ \,for all \,$a,b\in T$,
$a\le b$,}
  \end{equation*}
then $\{f_j\}$ contains a subsequence which converges pointwise on $T$.
\end{theorem}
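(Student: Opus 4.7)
The plan is to reduce both statements to Theorem~\ref{t:SPir} by successive applications combined with the Cantor diagonal procedure, in analogy with the proof of Theorem~\ref{t:SPloc}.

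For the first assertion (at most countable exceptional set $E\subset T$), I would first apply Theorem~\ref{t:SPir} to the restricted sequence $\{f_j|_{T\setminus E}\}\subset M^{T\setminus E}$. The hypothesis on $\{f_j\}$ in Theorem~\ref{t:SPirvar} delivers \eqref{e:spir} with $T$ replaced by $T\setminus E$ verbatim, and pointwise relative compactness is inherited by restriction. So Theorem~\ref{t:SPir} produces a subsequence $\{f_{j_p}\}_{p=1}^\infty$ of $\{f_j\}$ which converges pointwise on $T\setminus E$. Next, writing $E=\{e_n\}_{n=1}^\infty$ (the case of finite $E$ being trivial) and using that $\{f_{j_p}(e_n):p\in\Nb\}$ is relatively compact in $M$ for each $n\in\Nb$, I would apply the standard Cantor diagonal procedure to extract a further subsequence $\{f_{j_{p_q}}\}_{q=1}^\infty$ which converges at every $e_n\in E$. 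This diagonal subsequence then converges pointwise on $T=(T\setminus E)\cup E$, as required.

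For the second assertion (local version), I would follow the scheme of the proof of Theorem~\ref{t:SPloc}. Without loss of generality, pick sequences $\{a_k\}_{k=1}^\infty,\{b_k\}_{k=1}^\infty\subset T$ with $a_{k+1}<a_k<b_k<b_{k+1}$ for all $k\in\Nb$, $a_k\to\inf T$ and $b_k\to\sup T$. On $T\cap[a_1,b_1]$ the hypothesis directly gives \eqref{e:spir}, and pointwise relative compactness is again inherited, so Theorem~\ref{t:SPir} yields a subsequence $\{f_{J_1(j)}\}_{j=1}^\infty$ of $\{f_j\}$ converging pointwise on $T\cap[a_1,b_1]$. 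Inductively, assuming $\{f_{J_{k-1}(j)}\}_{j=1}^\infty$ has been constructed, the inequality
\[
\limsup_{j,j'\to\infty}V_\vep(f_{J_{k-1}(j)}-f_{J_{k-1}(j')},T\cap[a_k,b_k])\le
\limsup_{j,j'\to\infty}V_\vep(f_j-f_{j'},T\cap[a_k,b_k])<\infty
\]
allows another application of Theorem~\ref{t:SPir} on $T\cap[a_k,b_k]$, producing a subsequence $\{J_k(j)\}_{j=1}^\infty$ of $\{J_{k-1}(j)\}_{j=1}^\infty$ along which pointwise convergence holds on $T\cap[a_k,b_k]$. The diagonal sequence $\{f_{J_j(j)}\}_{j=1}^\infty$ is then eventually a subsequence of each $\{f_{J_k(j)}\}_{j=1}^\infty$, hence converges pointwise on $\bigcup_{k=1}^\infty\bigl(T\cap[a_k,b_k]\bigr)=T$.

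There is no serious obstacle: the nontrivial content is entirely packaged in Theorem~\ref{t:SPir} (whose proof rests on the Ramsey-type Lemma~\ref{l:RT}), and what remains here is a standard two-step extraction. The only thing to double-check is that restriction to $T\setminus E$, respectively to $T\cap[a_k,b_k]$, preserves the hypotheses of Theorem~\ref{t:SPir}, which is immediate from Lemma~\ref{l:ele}(b) and the obvious fact that pointwise relative compactness on $T$ implies pointwise relative compactness on any subset.
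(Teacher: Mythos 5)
Your proof is correct and is exactly the argument the paper intends: Theorem~\ref{t:SPirvar} is stated there as an immediate consequence of Theorem~\ref{t:SPir} plus the diagonal procedure, and your two reductions (restriction to $T\setminus E$ followed by a Cantor diagonal over the countable set $E$, and the expanding-interval diagonalization modelled on the proof of Theorem~\ref{t:SPloc}) are precisely the details being left to the reader.
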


\begin{example} \label{ex:ntns}
Condition \eq{e:spir} is \emph{not necessary\/} for the pointwise convergence even if
all functions in the sequence $\{f_j\}$ are regulated. To see this, let $\{f_j\}\subset M^T$
be the sequence from Example~\ref{ex:ucbw}, where $T=I=[0,1]$ and $(M,\|\cdot\|)$
is a normed linear space. First, note that
  \begin{equation} \label{e:xixx}
\limsup_{j,k\to\infty}V_\vep(f_j-f_k,T)\ge\limsup_{j\to\infty}V_\vep(f_j-f_{j+1},T)
\quad\mbox{for all}\quad\vep>0.
  \end{equation}
Let us fix $j\in\Nb$ and set $t_k=k/(j+1)!$, $k=0,1,\dots,(j+1)!$, so that
$f_{j+1}(t_k)=x$ for all such~$k$. We have $f_j(t_k)=x$ if and only if $j!t_k$ is an
integer, i.e., $k=n(j+1)$ with $n=0,1,\dots,j!$. It follows that $(f_j-f_{j+1})(t)=y-x$
if $t=t_k$ for those $k\in\{0,1,\dots,(j+1)!\}$, for which $k\ne n(j+1)$ for all
$n\in\{0,1,\dots,j!\}$ (and, in particular, for $k=1,2,\dots,j$); in the remaining cases
of $t\in T$ we have $(f_j-f_{j+1})(t)=0$. If $s_k=\frac12(t_{k-1}+t_k)=%
(k-\frac12)/(j+1)!$, $k=1,2,\dots,(j+1)!$, we get a partition of the interval
$T=[0,1]$ of the form
  \begin{equation*}
0=t_0<s_1<t_1<s_2<t_2<\dots<s_{(j+1)!}<t_{(j+1)!}=1,
  \end{equation*}
and $f_j(s_k)=f_{j+1}(s_k)=y$ for all $k=1,2,\dots,j$. Now, let
$0<\vep<\frac12\|x-y\|$, and a function $g\in M^T$ be arbitrary such that
$\|(f_j-f_{j+1})-g\|_{\infty,T}\le\vep$. By \eq{e:10}, we find
  \begin{align*}
V(g,T)&\ge\sum_{k=1}^{(j+1)!}\|g(t_k)\!-\!g(s_k)\|\ge\sum_{k=1}^j\bigl(
  \|(f_j\!-\!f_{j+1})(t_k)\!-\!(f_j\!-\!f_{j+1})(s_k)\|\!-\!2\vep\bigr)\\
&=(\|y-x\|-2\vep)j,
  \end{align*}
and so, by \eq{e:av}, $V_\vep(f_j-f_{j+1},T)\ge(\|y-x\|-2\vep)j$. Hence,
\eq{e:xixx} implies
  \begin{equation*}
\limsup_{j,k\to\infty}V_\vep(f_j-f_k,T)=\infty\quad\mbox{for \,all}\quad
0<\vep<\textstyle\frac12\|x-y\|.
  \end{equation*}
\end{example}

\begin{example} \label{ex:nrg}
Under the assumptions of Theorem~\ref{t:SPir} we cannot infer that the limit function
$f$ of an extracted subsequence of $\{f_j\}$ is a \emph{regulated\/} function
(this is the reason to term this theorem an \emph{irregular\/} selection principle).

Let $T=[a,b]$, $(M,\|\cdot\|)$ be a normed linear space, $x,y\in M$, $x\ne y$,~and
$\al_j=1+(1/j)$, $j\in\Nb$ (cf.\ Example~\ref{ex:rieq}). The sequence of Dirichlet
functions $f_j=\al_j\Dc_{x,y}=\Dc_{\al_jx,\al_jy}$, $j\in\Nb$, converges
\emph{uniformly\/} on $T$ to the Dirichlet function $f=\Dc_{x,y}$, which is
non-regulated. By virtue of \eq{e:fjik}, Theorem~\ref{t:SPir} can be applied to the
sequence $\{f_j\}$. On the other hand, Example~\ref{ex:rieq} shows that $\{f_j\}$ does
not satisfy condition \eq{e:sp}, and so, Theorem~\ref{t:SP} is inapplicable.

Sometimes it is more appropriate to apply Theorem~\ref{t:SPir} in the form of
Theorem~\ref{t:SPirvar}. Let $\{\beta_j\}_{j=1}^\infty\subset\Rb$ be a bounded
sequence (not necessarily convergent). Formally, Theorem~\ref{t:SPir} cannot be
applied to the sequence $f_j=\beta_j\Dc_{x,y}$, $j\in\Nb$, on $T=[a,b]$ (e.g., with
$\beta_j=(-1)^j$ or $\beta_j=(-1)^j+(1/j)$). However, note that for every $j\in\Nb$
the restriction of $f_j$ to the set $T\setminus\Qb$ is the constant function
$c(t)\equiv\beta_jy$ on $T\setminus\Qb$, whence $V_\vep(f_j-f_k,T\setminus\Qb)=0$
for all $\vep>0$. Hence Theorem~\ref{t:SPirvar} is applicable to $\{f_j\}$.\hfill$\square$
\end{example}

More examples, which can be adapted to the situation under consideration, can be found
in \cite[Section~4]{JMAA08}.

The  following theorem is a counterpart of Theorem~\ref{t:SPae}.

\begin{theorem}
Let $T\subset\Rb$, $(M,\|\cdot\|)$ be a normed linear space and $\{f_j\}\subset M^T$
be a \pw\ \rc\ (or a.e.\ \rc) on $T$ sequence of functions satisfying the condition\/{\rm:}
for every $p\in\Nb$ there is a measurable set $E_p\subset T$ with Lebesgue measure
$\mathcal{L}(E_p)\le1/p$ such that
  \begin{equation*}
\limsup_{j,k\to\infty}V_\vep(f_j-f_k,T\setminus E_p)<\infty\quad\mbox{for \,all}
\quad\vep>0.
  \end{equation*}
Then $\{f_j\}$ contains a subsequence which converges almost everywhere on~$T$.
\end{theorem}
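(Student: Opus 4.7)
The plan is to mimic the proof of Theorem~\ref{t:SPae} but use Theorem~\ref{t:SPir} (or rather Theorem~\ref{t:SPirvar}) in place of Theorem~\ref{t:SP} at each step. That is, I would extract, for each $p\in\Nb$, a subsequence converging pointwise on $T\setminus E_p$, and then apply a Cantor diagonal argument to obtain a single subsequence converging off a set of measure zero.

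First I would deal with the ``a.e.\ relatively compact'' case by absorbing a fixed null set $T_0\subset T$ into each $E_p$: set $\tilde E_p=T_0\cup E_p$, so that $\mathcal L(\tilde E_p)\le 1/p$, the sequence $\{f_j\}$ is pointwise \rc\ on $T\setminus\tilde E_p$, and Lemma~\ref{l:ele}(b) yields
\begin{equation*}
\limsup_{j,k\to\infty}V_\vep(f_j-f_k,T\setminus\tilde E_p)\le
\limsup_{j,k\to\infty}V_\vep(f_j-f_k,T\setminus E_p)<\infty
\end{equation*}
for all $\vep>0$. Applying Theorem~\ref{t:SPir} to $\{f_j\}$ on the set $T\setminus\tilde E_1$, I get a subsequence $\{f_{J_1(j)}\}_{j=1}^\infty$ of $\{f_j\}$ converging pointwise on $T\setminus\tilde E_1$. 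Inductively, given $\{f_{J_{p-1}(j)}\}_{j=1}^\infty$, I invoke Theorem~\ref{t:SPir} on $T\setminus\tilde E_p$ (the hypotheses pass to subsequences) and extract a further subsequence $\{f_{J_p(j)}\}_{j=1}^\infty$ converging pointwise on $T\setminus\tilde E_p$. By construction, $\{f_{J_p(j)}\}_{j=1}^\infty$ also converges pointwise on $T\setminus\tilde E_q$ for every $q\le p$.

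The diagonal sequence $\{f_{J_j(j)}\}_{j=1}^\infty$ is, for every fixed $p$, eventually a subsequence of $\{f_{J_p(j)}\}_{j=1}^\infty$, hence converges pointwise on $T\setminus\tilde E_p$. Consequently it converges pointwise on
\begin{equation*}
\bigcup_{p=1}^\infty(T\setminus\tilde E_p)=T\setminus\bigcap_{p=1}^\infty\tilde E_p.
\end{equation*}
The set $E=\bigcap_{p=1}^\infty\tilde E_p$ satisfies $\mathcal L(E)\le\mathcal L(\tilde E_p)\le 1/p$ for every $p$, hence $\mathcal L(E)=0$. Defining $f:T\to M$ as the pointwise limit on $T\setminus E$ and extending arbitrarily to $E$, I obtain a subsequence of $\{f_j\}$ converging a.e.\ on $T$ to $f$.

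The routine parts are the diagonal extraction and the measure-zero computation; the only slightly subtle point is ensuring that the hypotheses of Theorem~\ref{t:SPir} transfer to each successively smaller set $T\setminus\tilde E_p$, but this is immediate from Lemma~\ref{l:ele}(b) (which is monotone in the domain) and the fact that pointwise relative compactness is inherited by both subsequences and subsets. No obstruction of substance arises beyond these bookkeeping observations.
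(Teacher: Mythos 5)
Your proof is correct and follows precisely the route the paper intends: the theorem is stated as the counterpart of Theorem~\ref{t:SPae}, whose proof iterates the basic selection principle over the sets $T\setminus E_p$ and then diagonalizes, and you carry out the same scheme with Theorem~\ref{t:SPir} in place of Theorem~\ref{t:SP}. The bookkeeping (absorbing the exceptional null set into each $E_p$, the monotonicity of $V_\vep$ in the domain via Lemma~\ref{l:ele}(b), the passage of the $\limsup$ hypothesis to subsequences, and the measure-zero intersection) is all handled correctly.
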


Finally, we present an extension of Theorem~\ref{t:SPir} in the spirit of Theorems
\ref{t:SPweak} and \ref{t:SPw2}.

\begin{theorem}
Let $T\subset\Rb$ and $(M,\|\cdot\|)$ be a reflexive Banach space with separable dual
$(M^*,\|\cdot\|)$. Suppose the sequence of functions $\{f_j\}\subset M^T$ is such that
  \begin{itemize}
\renewcommand{\itemsep}{0.0pt plus 0.5pt minus 0.25pt}
\item[{\rm(i)}] $\sup_{j\in\Nb}\|f_j(t_0)\|\le C_0$ for some $t_0\in T$ and $C_0\ge0;$
\item[{\rm(ii)}] $\limsup_{j,k\to\infty}V_\vep(\lan(f_j-f_k)(\cdot),x^*\ran,T)<\infty$
  for all\/ $\vep>0$ and $x^*\in M^*$.
  \end{itemize}
Then, there is a subsequence of $\{f_j\}$, again denoted by $\{f_j\}$, and a function
$f\in M^T$ such that $f_j(t)\wto f(t)$ in $M$ as $j\to\infty$ for all $t\in T$.
\end{theorem}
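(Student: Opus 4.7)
The strategy is to follow the proof of Theorem~\ref{t:SPw2}, replacing the regular pointwise selection principle (Theorem~\ref{t:SP}) by the irregular version (Theorem~\ref{t:SPir}) at the step where one extracts a convergent subsequence of the scalar-valued functions $\lan f_j(\cdot),x^*\ran$.

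First I would verify pointwise norm-boundedness: $C(t)\equiv\sup_{j\in\Nb}\|f_j(t)\|<\infty$ for every $t\in T$. Fixing $x^*\in M^*$ and applying (ii) with $\vep=1$, there are $j_0(x^*)\in\Nb$ and a constant $C_1(x^*)>0$ with $V_1(\lan(f_j-f_k)(\cdot),x^*\ran,T)\le C_1(x^*)$ for all $j,k\ge j_0(x^*)$. Lemma~\ref{l:71}(b) then yields the oscillation bound $|\lan(f_j-f_k)(\cdot),x^*\ran(T)|\le C_1(x^*)+2$, and setting $k=j_0(x^*)$ and using (i) to control the contributions at $t_0$ gives
\begin{equation*}
|\lan f_j(t),x^*\ran|\le |\lan f_{j_0(x^*)}(t),x^*\ran|+2C_0\|x^*\|+C_1(x^*)+2
\quad\mbox{for all}\quad j\ge j_0(x^*).
\end{equation*}
Adjoining the finitely many terms with $j<j_0(x^*)$ gives $\sup_{j\in\Nb}|\lan f_j(t),x^*\ran|<\infty$ for each $x^*\in M^*$ and $t\in T$; viewing $\{f_j(t)\}$ as a pointwise-bounded family in $M^{**}=L(M^*;\Kb)$ and invoking the uniform boundedness principle yields $C(t)<\infty$.

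Next, I would pick a countable dense subset $\{x_k^*\}_{k=1}^\infty$ of $M^*$. For each $k$ the scalar sequence $\{\lan f_j(\cdot),x_k^*\ran\}\subset\Kb^T$ is pointwise relatively compact, because $|\lan f_j(t),x_k^*\ran|\le C(t)\|x_k^*\|$ and closed bounded subsets of $\Kb$ are compact. Assumption (ii) is precisely the hypothesis of Theorem~\ref{t:SPir} for this scalar sequence, so a pointwise-convergent-on-$T$ subsequence exists. Iterating this procedure together with Cantor's diagonal extraction, I obtain one subsequence, again written $\{f_j\}$, such that $\lim_{j\to\infty}\lan f_j(t),x_k^*\ran$ exists in $\Kb$ for every $t\in T$ and every $k\in\Nb$. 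The passage from countable to arbitrary $x^*\in M^*$ is the density/boundedness argument of Step~4 in the proof of Theorem~\ref{t:SPweak}: given $x^*\in M^*$, $t\in T$ and $\eta>0$, one chooses $x_k^*$ with $\|x^*-x_k^*\|\le\eta/(4C(t)+1)$ and $j^0$ large enough that $|\lan f_j(t)-f_{j'}(t),x_k^*\ran|\le\eta/2$ for $j,j'\ge j^0$; then $|\lan f_j(t)-f_{j'}(t),x^*\ran|\le 2C(t)\|x^*-x_k^*\|+\eta/2\le\eta$, so $\{\lan f_j(t),x^*\ran\}$ is Cauchy. Setting $F_t(x^*)=\lim_{j\to\infty}\lan f_j(t),x^*\ran$ defines a linear functional on $M^*$, bounded with $\|F_t\|\le\liminf_j\|f_j(t)\|\le C(t)$; hence $F_t\in M^{**}$. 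Reflexivity of $M$ produces a unique $f(t)\in M$ with $\lan f(t),x^*\ran=F_t(x^*)$ for every $x^*\in M^*$, that is, $f_j(t)\wto f(t)$ in $M$ for every $t\in T$.

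The principal obstacle is the first step, namely deducing norm-boundedness of $\{f_j(t)\}$ in $M$ from the purely scalar approximate-variation hypothesis (ii) together with the single-point bound (i); this is where the oscillation estimate of Lemma~\ref{l:71}(b) and Banach--Steinhaus are essential. Once $C(t)<\infty$ is secured, the only ingredient peculiar to the irregular setting is the substitution of Theorem~\ref{t:SPir} for Theorem~\ref{t:SP} in the extraction step, and the remainder is the standard separability/reflexivity bootstrap already used in Theorems~\ref{t:SPweak} and~\ref{t:SPw2}.
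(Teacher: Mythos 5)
Your proposal is correct and follows the route the paper intends for this result (the paper states it without proof, as an extension of Theorem~\ref{t:SPir} ``in the spirit of'' Theorems~\ref{t:SPweak} and~\ref{t:SPw2}): apply Theorem~\ref{t:SPir} to the scalar sequences $\lan f_j(\cdot),x_k^*\ran$ over a countable dense set of functionals, diagonalize, extend to all of $M^*$ by density, and use reflexivity to identify the weak limit. The one genuinely new ingredient --- deducing $C(t)=\sup_j\|f_j(t)\|<\infty$ from (i), (ii), the oscillation bound of Lemma~\ref{l:71}(b) applied to $\lan(f_j-f_k)(\cdot),x^*\ran$, and Banach--Steinhaus --- is handled correctly.
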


\end{document}